\documentclass[11pt,letterpaper]{article}
\usepackage[margin=1in]{geometry}
\usepackage{amsmath}
\usepackage{amssymb}
\usepackage{amsthm}
\usepackage{forloop}
\usepackage{nicefrac}
\usepackage{mathtools}
\usepackage[table]{xcolor}
\usepackage{url}
\usepackage{bm}
\usepackage{float}
\usepackage{hyperref}
\hypersetup{
    colorlinks,
    linkcolor={blue!100!black!70},
    citecolor={blue!75!black},
    urlcolor={blue!75!black}
}
\usepackage[shortlabels]{enumitem}
\usepackage[nameinlink,capitalise]{cleveref}
\usepackage{cancel}
\usepackage{framed}
\usepackage{booktabs}
\usepackage{makecell}

\newcommand{\defcal} [1]{\expandafter\newcommand\csname cal#1\endcsname{{\mathcal #1}}}
\newcommand{\defsf} [1]{\expandafter\newcommand\csname sf#1\endcsname{{\mathsf #1}}}
\newcommand{\defbf} [1]{\expandafter\newcommand\csname bf#1\endcsname{{\mathbf #1}}}
\newcommand{\defbb} [1]{\expandafter\newcommand\csname bb#1\endcsname{{\mathbb{#1}}}}
\newcommand{\deffrak} [1]{\expandafter\newcommand\csname frak#1\endcsname{{\mathfrak{#1}}}}

\newcounter{ct}
\forLoop{1}{26}{ct}{
    \edef\letter{\Alph{ct}}
    \expandafter\defcal\letter
    \expandafter\defsf\letter
    \expandafter\defbf\letter
    \expandafter\defbb\letter
    \expandafter\deffrak\letter
}
\setcounter{ct}{0}
\forLoop{1}{26}{ct}{
    \edef\letter{\alph{ct}}
    \expandafter\defcal\letter
    \expandafter\defsf\letter
    \expandafter\defbf\letter
    \expandafter\defbb\letter
    \expandafter\deffrak\letter
}

\newcommand{\rmd}{\mathrm{d}}
\newcommand{\rmI}{\mathrm{I}}

\newcommand{\argmin}{\mathop{\mathrm{argmin}}}
\newcommand{\argmax}{\mathop{\mathrm{argmax}}}

\newcommand{\numberthis}{\addtocounter{equation}{1}\tag{\theequation}}

\newcommand{\KLdist}{\mathrm{d}_{\mathrm{KL}}}

\newcommand{\subscript}[2]{$#1 _ #2$}
\newlist{assumplist}{enumerate}{1}
\setlist[assumplist]{label=\subscript{\textbf{\textsf{A}}}{\textsf{{\arabic*}}},leftmargin=*, itemsep=0pt}

\makeatletter
\newcommand*{\algotitle}[2]{%
  \stepcounter{algocf}%
  \hypertarget{algocf.title.\theHalgocf}{}%
  \NR@gettitle{#1}%
  \label{#2}%
  \addtocounter{algocf}{-1}%
}
\makeatother

\usepackage[round]{natbib}

\bibliographystyle{plainnat}
\allowdisplaybreaks

\setlength{\parskip}{4pt}
\setlength{\parindent}{10pt}

\newtheorem{lemma}{Lemma}
\newtheorem{theorem}{Theorem}
\newtheorem{proposition}{Proposition}
\newtheorem{corollary}{Corollary}

\newtheorem{fact}{Fact}

\makeatletter
\def\@maketitle{%
  \newpage
  \begin{center}%
  \let \footnote \thanks
    {\Large \bf \@title \par}%
  \end{center}%
  \par
  \vskip 0.5em
}
\makeatother

\newcommand{\opttrans}{\mathrm{OT}}
\newcommand{\allcoups}[2]{\Pi(#1, #2)}

\title{Designing Algorithms for Entropic Optimal Transport\\from an Optimisation Perspective}

\begin{document}
\maketitle

\begin{center}
{\large
\begin{tabular}{ccc}
    \makecell{Vishwak Srinivasan\textsuperscript{*} \\
    {\normalsize Department of EECS, MIT} \\
    {\normalsize\texttt{vishwaks@mit.edu}}} & 
    & \makecell{Qijia Jiang\textsuperscript{*} \\ 
    {\normalsize Department of Statistics, UC Davis} \\
    {\normalsize\texttt{qjang@ucdavis.edu}}}
\end{tabular}
\vskip0.3em
}
\end{center}

\begin{abstract}
In this work, we develop a collection of novel methods for the entropic-regularised optimal transport problem, which are inspired by existing mirror descent interpretations of the Sinkhorn algorithm used for solving this problem.
These are fundamentally proposed from an optimisation perspective: either based on the associated semi-dual problem, or based on solving a non-convex constrained problem over subset of joint distributions.
This optimisation viewpoint results in non-asymptotic rates of convergence for the proposed methods under minimal assumptions on the problem structure.
We also propose a momentum-equipped method with provable accelerated guarantees through this viewpoint, akin to those in the Euclidean setting.
The broader framework we develop based on optimisation over the joint distributions also finds an analogue in the dynamical Schr\"{o}dinger bridge problem.
\end{abstract}

\def\thefootnote{*}\footnotetext{Both authors contributed equally.}

\section{Introduction}
\label{sec:intro}
Given two probability distributions \(\mu\) and \(\nu\) over \(\calX, \calY \subseteq \bbR^{d}\) respectively, the optimal transport (OT) problem concerns finding an \emph{optimal} map that transforms samples from one to another.
The OT problem was originally proposed by Gaspard Monge in the 1780s to address the problem of finding a method to transport resources between a collection of sources and sinks, and was rediscovered in the early 1900s by Hitchcock, Kantorovich, and Koopmans with applications in designing transportation systems, coinciding with the birth of linear programming.
Recent advances in computing resources has renewed interest in the OT problem, both in the design of approximate methods for this problem suited for large-scale settings \citep{peyre2019computational}, and in the development of a theoretical understanding of its properties \citep{villani2003topics,santambrogio2015optimal}.

The ``optimality'' in the OT problem is defined in terms of a cost function \(c : \calX \times \calY \to \bbR\).
The optimal value of the problem results in a notion of discrepancy between \(\mu\) and \(\nu\) that complements information-theoretic discrepancy measures like the total variation distance or the Kullback-Leibler (KL) divergence.
Formally, let \(\Pi(\mu, \nu)\) be the set of all joint distributions whose marginals are \(\mu\) and \(\nu\).
The Kantorovich formulation of the OT problem is given by the following program:
\begin{equation}
\label{eqn:OT-prob-primal}
    \inf_{\pi \in \allcoups{\mu}{\nu}} \iint c(x, y) \rmd\pi(x, y) =: \opttrans(\mu, \nu; c)~.
\end{equation}

A notable case is when \(c(x, y) = \|x - y\|_{p}^{p}\) where \(\|\cdot\|_{p}\) is the \(\ell_{p}\)-norm, and this corresponds to the \(p\)-Wasserstein distance between \(\mu\) and \(\nu\) raised to the \(p^{th}\) power.
The space of probability measures endowed with the \(p\)-Wasserstein distance takes on a rich Riemannian structure with implications in developing the metric theory of gradient flows \citep{ambrosio2008gradient}.
Wasserstein distances have also found applications in image processing and operations research, which has motivated the design of efficient methods to compute these distances.
In modern machine learning and statistics however, methods based on solving the exact OT problem have not seen widespread use owing to both computational and statistical reasons resulting primarily from the high-dimensional nature of the problems involved.

These bottlenecks can surprisingly be alleviated by adding an entropy regularisation to the OT problem, referred to as the entropic optimal transport (eOT) problem.
More precisely, for a regularisation parameter \(\varepsilon > 0\), this is defined as
\begin{equation}
\label{eqn:eOT-prob-primal}
    \inf_{\pi \in \allcoups{\mu}{\nu}} \iint c(x, y) \rmd\pi(x, y) + \varepsilon \cdot \KLdist(\pi \| \mu \otimes \nu) =: \opttrans_{\varepsilon}(\mu, \nu; c)~.
\end{equation}
Above, \(\KLdist(\pi \| \mu \otimes \nu)\) is the KL divergence between \(\pi\) and the product distribution \(\mu \otimes \nu\).
A popular algorithm for solving this problem is the \emph{Sinkhorn} algorithm \citep{sinkhorn1967concerning}, and this was recently popularised by \citet{cuturi2013sinkhorn} by demonstrating it as a viable solution for solving the eOT problem over large datasets, and by extension for solving the OT problem approximately.
We refer the reader to \citet[Remark 4.5]{peyre2019computational} for a historical view of the method.
Although \citet{sinkhorn1967concerning} proved that the method converges asymptotically, the first known non-asymptotic rate of convergence was given by \citet{franklin1989scaling}.
They do so by viewing the Sinkhorn algorithm as a matrix scaling method and leverage Hilbert's projective metric in conjunction with Birkhoff's theorem.
Other interpretations of the Sinkhorn algorithm have led to both asymptotic and non-asymptotic guarantees; see \cite{ruschendorf1995convergence,marino2020optimal,carlier2022linear} for instance.
We highlight that \(\opttrans_{\varepsilon}(\mu, \nu; c)\) is \emph{biased} relative to \(\opttrans(\mu, \nu; c)\) as \(\opttrans_{\varepsilon}(\mu, \mu; c) \neq 0\).
To address this, a debiasing strategy is proposed in \cite{feydy2019interpolating} which results in the \emph{Sinkhorn divergence} which is shown to metrize convergence in law.
In addition to the computational benefits of the Sinkhorn algorithm, the sample complexity to estimate the Sinkhorn divergence using samples from \(\mu\) and \(\nu\) scales much better than \(\opttrans(\mu, \nu; c)\) for a variety of costs \citep{genevay2019sample,mena2019statistical,chizat2020faster}.

Of relevance to this paper is a more recent interpretation of the Sinkhorn algorithm as an \emph{infinite-dimensional optimisation procedure}, which originated with the mirror descent interpretation of the Sinkhorn algorithm in \citet{mishchenko2019sinkhorn} for discrete spaces \(\calX\) and \(\calY\).
This interpretation has been instrumental in obtaining assumption-free guarantees for the Sinkhorn algorithm and has led to a growing body of literature since its inception \citep{mensch2020online,leger2021gradient,aubin2022mirror,deb2023wasserstein,karimi2024sinkhorn}.

\paragraph{Summary of contributions}
In this work, we draw inspiration from this refreshing viewpoint, and design new methods for the eOT problem with provable guarantees.
Notably, these guarantees do not place any specific assumptions on the domains \(\calX\) and \(\calY\), or on the marginal distributions \(\mu\) and \(\nu\).
A key mathematical object that we consider in ths design is the \emph{semi-dual formulation} of the eOT problem which we describe in more detail in \cref{sec:new-class}.
The semi-dual problem is a concave unconstrained program, which motivates the use of a steepest ascent procedure and with the Sinkhorn algorithm leads to an abstract class of methods.
Roughly speaking, these methods can be seen as iteratively minimising the discrepancy between \(\nu\) and the \(\calY\)-marginal of a joint distribution while ensuring the joint distribution maintains a certain form.

We show that these class of methods can be interpreted in a variety of ways as described in \cref{sec:measure_interpretation}.
These interpretations come in handy when analysing a subclass of methods that can be seen as minimising the squared maximum mean discrepancy \citep{gretton2006kernel}.
We find that the \(\calY\)-marginal of the sequence of joint distributions resulting from these methods converges to \(\nu\) at a rate that scales as \(\frac{1}{N}\) where \(N\) is the number of iterations, and consequently leads to an optimal coupling for the eOT problem due to the form of the joint distribution that is maintained.

We next show how we can use principles in finite-dimensional optimisation to adapt to growth conditions of the semi-dual to design other kinds of steepest descent methods, extending the class of methods described a priori for solving the eOT problem in \cref{sec:extensions:steep-ascent}.
These also lead to methods that provably optimise the semi-dual (and consequently solving the eOT problem) and converge at a rate that scales \(\frac{1}{N}\) under minimal assumptions.
We find that one of these methods can be accelerated and converge at a faster rate that scales as \(\frac{1}{N^{2}}\) under the same assumptions.
We finally discuss a path-space generalisation of the class of methods in \cref{sec:new-class} for solving the dynamical Schr\"{o}dinger bridge problem in \cref{sec:extensions:SB}.

\section{Background}
\label{sec:background}
\paragraph{Notation}
For a set \(\calZ\), the set of probability measures over \(\calZ\) is denoted by \(\calP(\calZ)\).
Given a distribution \(\pi \in \calP(\calX \times \calY)\), \(\pi_{\calX}\) and \(\pi_{\calY}\) are the \(\calX\)-marginal and \(\calY\)-marginal respectively i.e., for \(A \subseteq \calX, B \subseteq \calY\),
\begin{equation*}
    \pi_{\calX}(A) = \int_{A \times \calY}\rmd\pi(x,y) ~; \quad \pi_{\calY}(B) = \int_{\calX \times B}\rmd \pi(x, y)~.
\end{equation*}
Given two distributions \(\mu \in \calP(\calX), \nu \in \calP(\calY)\), we say that \(\pi \in \calP(\calX \times \calY)\) is a coupling of \(\mu\) and \(\nu\) if \(\pi_{\calX} = \mu\) and \(\pi_{\calY} = \nu\).
For \(\rho \in \calP(\calZ)\), we use \(\rmd\rho\) to represent its density.
For convenience, if \(\rho\) has a density w.r.t. the Lebesgue measure, we use \(\rho\) to also denote its density depending on the setting.

Let \(\rho, \rho' \in \calP(\calZ)\) be probability measures such that \(\rho\) is absolutely continuous w.r.t. \(\rho'\).
The KL divergence between \(\rho, \rho'\) is denoted by \(\KLdist(\rho \| \rho') := \int_{\calZ} \rmd \rho \log \frac{\rmd \rho}{\rmd \rho'}\).
For a functional \(\calF : \calP(\calZ) \to \bbR\), we call \(\delta \calF(\rho)\) its first variation at \(\rho \in \calP(\calZ)\), and this is the function (up to additive constants) that satisfies \cite[Def. 7.12]{santambrogio2015optimal}
\begin{equation*}
    \langle \delta\calF(\rho), \chi\rangle = \int_{\calZ} \delta \calF(\rho)(z)\rmd\chi(z) = \lim_{h \to 0} \frac{\calF(\rho + h \cdot \chi) - \calF(\rho)}{h} \qquad \forall ~\chi \text{ such that} \int \rmd \chi(z) = 0~.
\end{equation*}

For a measurable function \(f : \calZ \to \bbR\), its \(L^{p}\)-norm w.r.t. \(\rho\) is denoted by \(\|f\|_{L^{p}(\rho)}\), which is defined as \(\left(\int_{\calZ} |f(z)|^{p}\rmd \rho(z)\right)^{\nicefrac{1}{p}}\).
When \(\rho\) is replaced with \(\calZ\) as \(\|f\|_{L^{p}(\calZ)}\), then this is understood to be the \(L^{p}\)-norm of \(f\) w.r.t. the Lebesgue measure of \(\calZ\).
For another measurable function \(g : \calZ \to \bbR\), the \(L^{2}(\rho)\) inner product is defined as \(\langle f, g\rangle_{L^{2}(\rho)} = \int_{\calZ} f(z)g(z) \rmd \rho(z)\).
As a special case, when the subscript in the norm and inner product are omitted as in \(\|\cdot\|\) and \(\langle\cdot, \cdot\rangle\), then these correspond to the \(L^{2}(\calZ)\)-norm and inner product respectively.
Following the notation in \citet[Chap. 3]{rudin1987real}, we use \(L^{p}\) to also denote the space of measurable functions whose \(L^{p}\) norm is finite.

\subsection{The entropic optimal transport problem}

We recall the definition of the eOT problem from \cref{eqn:eOT-prob-primal} below. For a lower-semi-continuous cost cost function \(c : \calX \times \calY \to \bbR\), the primal formulation of the entropic optimal transport (abbrev. eOT) problem is given by
\begin{equation*}
    \inf_{\pi \in \allcoups{\mu}{\nu}} \iint c(x, y) \rmd\pi(x, y) + \varepsilon \cdot \KLdist(\pi \| \mu \otimes \nu) =: \opttrans_{\varepsilon}(\mu, \nu; c)~.
\end{equation*}
When \(\varepsilon = 0\), this corresponds to the (canonical) optimal transport problem.
On the other hand, it can be seen that as \(\varepsilon \to \infty\), the solution to \(\opttrans_{\varepsilon}(\mu, \nu; c)\) tends to the product measure \(\mu \otimes \nu\).
The eOT problem is also directly related to the \emph{static Schr\"{o}dinger bridge problem} \cite[Def. 2.2]{leonard2014survey} denoted by \(\mathrm{SB}(\mu, \nu; \pi^{\textsf{ref}})\) for the reference measure \(\pi^{\textsf{ref}}\) whose density is \(\rmd \pi^{\textsf{ref}} \propto \exp\left(-\nicefrac{c}{\varepsilon}\right)\rmd (\mu \otimes \nu)\) and marginals \(\mu\) and \(\nu\).
Then, direct calculation gives
\begin{equation}
\label{eqn:static_SB}
    \opttrans_{\varepsilon}(\mu, \nu; c) = \varepsilon \cdot \Big\{\underbrace{\inf_{\pi \in \Pi(\mu, \nu)} \KLdist(\pi \| \pi^{\textsf{ref}})}_{\mathrm{SB}(\mu, \nu; \pi^{\textsf{ref}})} - \log Z_{\textsf{ref}}\Big\}~; ~~ Z_{\textsf{ref}} = \iint \exp\left(-\frac{c(x, y)}{\varepsilon}\right)\rmd\mu(x)\rmd\nu(y)~.
\end{equation}
The above problem is a (strictly) convex minimisation problem since \(\Pi(\mu, \nu)\) is convex, and \(\pi \mapsto \KLdist(\pi \| \pi^{\textsf{ref}})\) is strictly convex.
Moreover, under certain regularity conditions, the eOT problem admits a unique solution \(\pi^{\star} \in \Pi(\mu, \nu)\) \citep{leonard2014survey,nutz2022entropic} of the form
\begin{equation}
\label{eqn:eOT-opt-form}
    \rmd\pi^{\star}(x, y) = \exp\left(\phi^{\star}(y) - \psi^{\star}(x) - \frac{c(x, y)}{\varepsilon}\right)\rmd \mu(x)\rmd \nu(y) 
\end{equation}
where \(\psi^{\star}\) and \(\phi^{\star}\) are called \emph{Schr\"{o}dinger potentials}, and the optimal value of the problem is
\begin{equation*}
    \opttrans_{\varepsilon}(\mu, \nu; c) = \varepsilon \cdot \left(\int \phi^{\star}(y)\rmd \nu(y) - \int \psi^{\star}(x)\rmd\mu(x)\right)~.
\end{equation*}
While the solution \(\pi^{\star}\) is unique, the Schr\"{o}dinger potentials are unique only up to constants, that is, if \(\psi^{\star}\) and \(\phi^{\star}\) are Schr\"{o}dinger potentials, then \(\psi^{\star} + \beta\) and \(\phi^{\star} + \beta\) are also Schr\"{o}dinger potentials for any constant \(\beta \in \bbR\).
The eOT problem has a dual formulation with zero duality gap, and is \emph{unconstrained}:
\begin{align*}
    \opttrans_{\varepsilon}(\mu, \nu; c) &= \varepsilon \cdot \sup \left\{ \int \phi(y)\rmd \nu(y) -\int \psi(x)\rmd \mu(x) \right.\\
    &\qquad \qquad \qquad \left.- \log \iint \exp\left(\phi(y) -\psi(x) - \frac{c(x, y)}{\varepsilon}\right) \rmd\mu(x)\rmd \nu(y) \right\}~.\numberthis\label{eqn:eOT-prob-dual}
\end{align*}
Any solution of the dual problem above corresponds to a pair of Schr\"{o}dinger potentials and vice versa \cite[Thm. 3.2]{nutz2021introduction}.
Note that \(\opttrans_{\varepsilon}(\mu, \nu; c) = \varepsilon \cdot \opttrans_{1}(\mu, \nu; \nicefrac{c}{\varepsilon})\), and hence without loss of generality, we focus on \(\opttrans_{1}(\mu, \nu; \nicefrac{c}{\varepsilon})\) in the rest of this work.
We denote the objective in the dual form of \(\opttrans_{1}(\mu, \nu; \nicefrac{c}{\varepsilon})\) by \(D(\psi, \phi)\) and use \(\pi^{\star}\) to denote the (primal) solution of \(\opttrans_{1}(\mu, \nu; \nicefrac{c}{\varepsilon})\).

\subsection{Related work}
Traditional analyses view the Sinkhorn algorithm as either alternating projection on the two marginals $\mu,\nu$ or block maximization on the two dual potentials $\psi, \phi$.
These render a linear convergence with a contraction rate of the form \(1 - e^{-\nicefrac{\|c\|_{\infty}}{\varepsilon}}\).
An important limitation of this analysis is that the rate becomes \emph{exponentially slower} with growing $\|c\|_\infty$ or decreasing $\varepsilon$.
Recently, several analyses have focused on the Sinkhorn algorithm in the setting where \(\calX\) and \(\calY\) are discrete spaces \citep{altschuler2017near, lin2022efficiency, dvurechensky2018computational}.
More relevant to us is where \(\calX\) and \(\calY\) are continuous spaces where many probabilistic approaches have been taken for analyzing the Sinkhorn algorithm.
Most recently, \citet{chiarini2024semiconcavity} leverage the stability of optimal plans with respect to the marginals to obtain exponential convergence with unbounded cost for all $\varepsilon>0$, albeit under various sets of conditions on the marginals.
This relaxes the assumptions made in \citet{chizat2024sharper} for semi-concave bounded costs while still maintaining a contraction rate that only deteriorates \emph{polynomially} in $\varepsilon$.
We refer the reader to \citet[Sec. 1.5]{chiarini2024semiconcavity} for a more comprehensive literature review for analyses of the Sinkhorn algorithm.
In summary, the most recent analyses place assumptions on the growth of the cost, decay of the tails of \(\mu, \nu\) and/or log-concavity, to obtain exponential convergence guarantees.

In contrast, the advantage of taking the optimisation route i.e., viewing the Sinkhorn algorithm as performing infinite-dimensional mirror descent \citep{leger2021gradient,aubin2022mirror, karimi2024sinkhorn} is that it provides a guarantee under \emph{minimal} assumptions.
From non-asymptotic guarantee standpoint, these aforementioned works furnish a discrete-time iteration complexity that scales as \(\nicefrac{1}{N\varepsilon}\).
If the costs are additionally assumed to be bounded, then \citet{aubin2022mirror} recover a contractive rate reminiscent of the classical Hilbert analysis.
In this work, we achieve the similar rates while significantly expanding the scope of algorithm design and shed more light on the eOT problem, unifying both the primal and dual perspectives.
\cite{mensch2020online} gives another mirror descent interpretation of Sinkhorn but the change of variable results in a non-convex objective which is hard to prove convergence for.
There has also been interest in designing alternative algorithms for the eOT problem, among them \cite{conforti2023projected} that designs Wasserstein gradient flow dynamics over the submanifold of $\Pi(\mu,\nu)$ which borrow tools from SDEs and PDEs in its analysis. 

\section{A new class of methods for solving the eOT problem}
\label{sec:new-class}
In this section, we propose a class of methods for solving the eOT problem, termed \ref{eqn:phi-match-update}.
As we explain later in this section, this new class of updates is derived by identifying commonalities between the Sinkhorn algorithm and another method that is based on the \emph{semi-dual formulation} associated with the dual eOT problem in \cref{eqn:eOT-prob-dual}.
We begin by first introducing this semi-dual problem in \cref{sec:semi-dual} and present this class of methods in \cref{sec:new-class:generalise}.

\subsection{The semi-dual problem in eOT}
\label{sec:semi-dual}
The semi-dual problem was originally discussed in \cite{genevay2016stochastic}, specifically to motivate the use of stochastic algorithms to solve the eOT problem in the setting where \(\calX\) and \(\calY\) are discrete spaces.
Later work by \cite{cuturi2018semi} studies this in more detail, while still primarily focusing on the discrete space setting.

We first define the following operations, in the notation of \citet{leger2021gradient}.
Let \(\phi \in L^{1}(\nu)\) and \(\psi \in L^{1}(\mu)\).
Define
\begin{equation*}
    \phi^{+}(x) := \log\int_{\calY}\exp\left(\phi(y) - \frac{c(x, y)}{\varepsilon}\right)\rmd \nu(y)~; ~~ \psi^{-}(y) := -\log \int_{\calX} \exp\left(\psi(x) + \frac{c(x, y)}{\varepsilon}\right)\rmd \mu(x)~.
\end{equation*}

Importantly, from \cref{eqn:eOT-opt-form} we see that any pair of Schr\"{o}dinger potentials \((\phi^{\star}, \psi^{\star})\) corresponding to \(\opttrans_{1}(\mu, \nu; \nicefrac{c}{\varepsilon})\) satisfies \(\psi^{\star} = (\phi^{\star})^{+}\) and \(\phi^{\star} = (\psi^{\star})^{-}\); this can be inferred by noting that \(\pi^{\star}_{\calY} = \nu\) and \(\pi^{\star}_{\calX} = \mu\).
Therefore, instead of solving the dual problem \cref{eqn:eOT-prob-dual} in two variables \(\phi, \psi\), it would be sufficient to solve one of
\begin{equation*}
    \sup_{\phi \in L^{1}(\nu)} D(\phi^{+}, \phi) \qquad \text{or} \qquad \sup_{\psi \in L^{1}(\mu)} D(\psi, \psi^{-})~.
\end{equation*}
Without loss of generality, we work with the maximisation problem over \(\phi\), and this is referred to as the \emph{semi-dual problem} for eOT.
Additionally, we note that for any \(\phi \in L^{1}(\nu)\), the objective \(J\) of the semi-dual satisfies
\begin{equation}
\label{eqn:J-def}
    J(\phi) := D(\phi^{+}, \phi) = \sup_{\psi \in L^{1}(\mu)} D(\psi, \phi) = \int_{\calY} \phi(y)\rmd \nu(y) - \int_{\calX} \phi^{+}(x)\rmd \mu(x)\, .
\end{equation}
The fact that \(D(\phi^{+}, \phi) = \sup_{\psi \in L^{1}(\mu)} D(\psi, \phi)\) can be derived by using the fact that \(\psi \mapsto D(\psi, \phi)\) is concave and its first variation at \(\phi^{+}\) is \(0\).
This leads to viewing the semi-dual problem for eOT as explicitly eliminating \(\psi\) via a partial maximisation of \(D(\psi, \phi)\) in the dual problem (\cref{eqn:eOT-prob-dual}).
To translate a dual potential \(\phi \in L^{1}(\nu)\) to a joint distribution over \(\calX \times \calY\), define \(\pi(\phi, \phi^{+})\) with density w.r.t. \(\mu \otimes \nu\) as
\begin{equation}
\label{eqn:primal_pi}
    \rmd\pi(\phi, \phi^{+})(x, y) = \exp\left(\phi(y) - \phi^{+}(x) - \frac{c(x,y)}{\varepsilon}\right)\rmd\nu(y)\rmd\mu(x)~.
\end{equation}
This is a valid probability density function over \(\calX \times \calY\) as evidenced by the fact that its \(\calX\)-marginal is always \(\mu\).
Recall that this joint distribution with \(\phi \leftarrow \phi^{\star}\) in \cref{eqn:primal_pi} corresponds to the unique solution of the eOT problem \cref{eqn:eOT-opt-form} by virtue of \((\phi^{\star})^{+} = \psi^{\star}\).

\subsubsection{Properties of the semi-dual \(J\)}

We henceforth assume that \(\mu\) and \(\nu\) have densities w.r.t. the Lebesgue measure.
In this subsection, we state properties of the semi-dual \(J\) that underlie the methods that we propose and study in this section.
As a prelude, we state two general observations about the semi-dual \(J\).

\begin{fact}[Shift-invariance]
\label{rmk:J_shift_invariance}
The semi-dual is invariant to additive perturbations of its argument.
Formally, for any \(C \in \bbR\) and \(\phi \in L^{1}(\nu)\), \(J(\phi + C \cdot \bm{1}) = J(\phi)\) where \(\bm{1} : x \mapsto 1\).
This is due to the fact that \((\phi + C \cdot \bm{1})^{+} = \phi^{+} + C \cdot \bm{1}\).
\end{fact}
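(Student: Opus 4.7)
The plan is to verify the identity stated in the fact directly from the definitions of the operation $\phi \mapsto \phi^+$ and of the semi-dual $J$. The argument is essentially a one-line calculation, so the ``proof proposal'' is really just an outline of which two cancellations to execute and in what order.

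First, I would compute $(\phi + C \cdot \bm{1})^+$ from its defining integral. Using that the exponential pulls the additive constant $C$ outside the integrand and $\log$ turns that into an additive constant outside the integral,
\begin{equation*}
    (\phi + C \cdot \bm{1})^{+}(x) = \log \int_{\calY} \exp\!\left(\phi(y) + C - \frac{c(x,y)}{\varepsilon}\right)\rmd\nu(y) = C + \phi^{+}(x),
\end{equation*}
which is exactly the identity the fact flags as its justification. This step uses nothing beyond linearity of the exponent and the fact that $C$ does not depend on the integration variable $y$.

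Second, I would substitute both the perturbed potential $\phi + C \cdot \bm{1}$ and its $+$-transform $\phi^{+} + C \cdot \bm{1}$ into the explicit formula \eqref{eqn:J-def} for $J$:
\begin{align*}
    J(\phi + C \cdot \bm{1})
    &= \int_{\calY} \bigl(\phi(y) + C\bigr)\rmd\nu(y) - \int_{\calX} \bigl(\phi^{+}(x) + C\bigr)\rmd\mu(x) \\
    &= \Bigl(\int_{\calY}\phi(y)\rmd\nu(y) - \int_{\calX}\phi^{+}(x)\rmd\mu(x)\Bigr) + C\cdot\Bigl(\int_{\calY}\rmd\nu - \int_{\calX}\rmd\mu\Bigr) \\
    &= J(\phi),
\end{align*}
where the $C$-terms cancel because both $\mu$ and $\nu$ are probability measures (each total mass equals $1$). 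No step here is an obstacle: the only ingredients are the pull-out of the additive constant from $\log\int\exp(\cdot)$ and the unit total mass of $\mu,\nu$. If anything, the one point worth being explicit about is that $\phi \in L^1(\nu)$ implies $\phi + C \in L^1(\nu)$ (trivially, since constants are integrable against a probability measure), so that $J(\phi + C \cdot \bm{1})$ is well-defined in the same sense as $J(\phi)$.
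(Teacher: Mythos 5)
Your proposal is correct and matches the paper's reasoning exactly: the paper's own justification is the single identity $(\phi + C\cdot\bm{1})^{+} = \phi^{+} + C\cdot\bm{1}$, and you verify that identity and then observe the $C$-terms cancel in $J$ because $\mu$ and $\nu$ have unit mass, which is the only remaining step the paper leaves implicit.
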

\begin{fact}[First variation]
\label{fct:first-var}
The first variation \(\delta J\) of the semi-dual \(J\) can be succinctly expressed in terms of the marginal \(\pi(\phi, \phi^{+})_{\calY}\) \cite[Lem. 1]{leger2021gradient}: for any \(\phi \in L^{1}(\nu)\),
\begin{equation}
\label{eqn:first-var-J}
    \delta J(\phi)(y) = \nu(y) - \pi(\phi, \phi^{+})_{\calY}(y) = \nu(y) - \int_{\calX} \exp\left(\phi(y) - \phi^{+}(x) - \frac{c(x, y)}{\varepsilon}\right)\nu(y)\mu(x)\rmd x~.
\end{equation}
\end{fact}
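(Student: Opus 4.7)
The plan is to compute the Gateaux derivative of $J$ directly at $\phi$ in an admissible direction $\chi \in L^{1}(\nu)$ with $\int \chi \rmd\nu = 0$, and then read off the first variation from the definition given in the preliminaries. Since $J(\phi) = \int \phi \rmd\nu - \int \phi^{+}\rmd\mu$, the first term is linear in $\phi$ and contributes $\int \chi(y)\rmd\nu(y)$ to $\lim_{h \to 0} h^{-1}(J(\phi + h\chi) - J(\phi))$. All the work is in differentiating the nonlinear term $x \mapsto \phi^{+}(x)$ under the two integrals.

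First, I would fix $x \in \calX$ and differentiate $h \mapsto (\phi + h\chi)^{+}(x) = \log \int_{\calY} \exp(\phi(y) + h\chi(y) - c(x,y)/\varepsilon)\rmd\nu(y)$ at $h = 0$. A formal application of the chain rule gives
\begin{equation*}
    \left.\frac{\rmd}{\rmd h}\right|_{h = 0} (\phi + h\chi)^{+}(x) = \frac{\int_{\calY} \chi(y)\exp(\phi(y) - c(x,y)/\varepsilon)\rmd\nu(y)}{\int_{\calY} \exp(\phi(y) - c(x,y)/\varepsilon)\rmd\nu(y)} = \int_{\calY} \chi(y)\exp\!\left(\phi(y) - \phi^{+}(x) - \frac{c(x,y)}{\varepsilon}\right)\rmd\nu(y),
\end{equation*}
where the second equality is just the definition of $\phi^{+}$ used to rewrite the denominator as $\exp(\phi^{+}(x))$ and absorb it into the exponential. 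Integrating this against $\mu$ in $x$ and applying Fubini's theorem to exchange the order of integration transforms the derivative of the $\phi^{+}$ term into $\int_{\calY} \chi(y)\left[\int_{\calX} \exp(\phi(y) - \phi^{+}(x) - c(x,y)/\varepsilon)\rmd\mu(x)\right]\rmd\nu(y)$. By the definition of $\pi(\phi, \phi^{+})$ in \cref{eqn:primal_pi}, the bracketed quantity is exactly the Lebesgue density of the $\calY$-marginal $\pi(\phi, \phi^{+})_{\calY}$ divided by the density $\nu(y)$, so multiplication by $\rmd\nu(y)$ gives $\pi(\phi, \phi^{+})_{\calY}(y)\rmd y$.

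Collecting both pieces yields
\begin{equation*}
    \lim_{h \to 0} \frac{J(\phi + h\chi) - J(\phi)}{h} = \int_{\calY} \chi(y)\bigl(\nu(y) - \pi(\phi, \phi^{+})_{\calY}(y)\bigr)\rmd y,
\end{equation*}
and by the definition of the first variation this identifies $\delta J(\phi)(y) = \nu(y) - \pi(\phi, \phi^{+})_{\calY}(y)$, up to the usual additive constant (which is irrelevant since $\int \chi \rmd y$ need not vanish only after we enforce the constraint; in any case the shift-invariance noted in \cref{rmk:J_shift_invariance} reflects the fact that $\int (\nu - \pi(\phi, \phi^{+})_{\calY})\rmd y = 0$, so the ambiguity is automatically fixed).

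The only genuine technical point is justifying the interchange of differentiation and integration when computing $\frac{\rmd}{\rmd h}(\phi + h\chi)^{+}$, and the subsequent use of Fubini. Both reduce to exhibiting a dominating function, which holds under the standing integrability assumptions on $\phi \in L^{1}(\nu)$, $\chi \in L^{1}(\nu)$, and on the cost $c$ that make $\phi^{+}$ well-defined $\mu$-a.e.; given that the paper already works under these assumptions (see the form of the optimizer in \cref{eqn:eOT-opt-form}), I do not anticipate this being a genuine obstacle, and the proof is essentially a direct calculation.
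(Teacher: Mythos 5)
Your computation is correct, and it is the natural way to prove the stated formula. The paper itself does not supply a proof of this Fact — it is cited directly from \citet[Lem.~1]{leger2021gradient} — so there is no ``paper proof'' to compare against in a strict sense, but your derivation is exactly the direct Gateaux-derivative argument one would expect: the term $\int\phi\,\rmd\nu$ is linear and contributes $\int\chi\,\rmd\nu$; the term $\phi\mapsto\phi^{+}$ is a log-partition function, so differentiating it in the direction $\chi$ produces the Gibbs average $\int_{\calY}\chi(y)\exp(\phi(y)-\phi^{+}(x)-c(x,y)/\varepsilon)\,\rmd\nu(y)$, and averaging that against $\mu$ via Fubini recovers $\int\chi\cdot\pi(\phi,\phi^{+})_{\calY}$, giving $\delta J(\phi)=\nu-\pi(\phi,\phi^{+})_{\calY}$.

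One small cleanup: the closing remark about the additive-constant ambiguity is a bit muddled. You first restrict to directions with $\int\chi\,\rmd\nu=0$, but the paper's convention (following Santambrogio) is to take $\int\rmd\chi(z)=0$, i.e.\ mean-zero w.r.t.\ the ambient (Lebesgue) reference; and in any case the representative $\nu-\pi(\phi,\phi^{+})_{\calY}$ you obtain already integrates to $0$ over $\calY$, which is the manifestation of the shift-invariance $J(\phi+C)=J(\phi)$. Neither constraint on $\chi$ is actually needed to carry out the calculation — you compute the directional derivative for arbitrary $\chi$ and read off the unique $L^2$-representative. The substance is fine; just state which convention you are using rather than conflating the two, and drop the parenthetical about the constraint ``automatically fixing'' the ambiguity, since the ambiguity is a property of the equivalence class, not something the computation resolves.
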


The following two properties play a crucial role in our analysis. The first is that the Bregman divergence of \(J\) is non-positive -- in other words \(J\) is a concave functional.

\begin{lemma}
\label{lem:concavity-of-J}
Let \(\phi, \overline{\phi} \in L^{1}(\nu)\). Then, \(J(\overline{\phi}) - J(\phi) -\left\langle \delta J(\phi), \overline{\phi} - \phi \right\rangle \leq 0\).
\end{lemma}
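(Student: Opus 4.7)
The plan is to reduce the inequality to the (pointwise in $x$) convexity of the map $\phi \mapsto \phi^{+}(x)$, which is a log-moment-generating functional and hence convex by Jensen's inequality. The linear part of $J$ contributes zero to the Bregman divergence, so all the work sits in the $\phi^{+}$ term.

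\textbf{Step 1: Expand the Bregman divergence.} Using $J(\phi) = \int \phi \, \rmd\nu - \int \phi^{+}\,\rmd\mu$ and $\delta J(\phi)(y) = \nu(y) - \pi(\phi,\phi^{+})_{\calY}(y)$ from \cref{fct:first-var}, I would write
\begin{align*}
J(\overline{\phi}) - J(\phi) - \langle \delta J(\phi), \overline{\phi} - \phi\rangle
&= -\int \overline{\phi}^{+}\,\rmd\mu + \int \phi^{+}\,\rmd\mu + \int (\overline{\phi} - \phi)\,\rmd\pi(\phi,\phi^{+})_{\calY}\,.
\end{align*}
The $\int\phi\,\rmd\nu$ and $\int(\overline{\phi}-\phi)\,\rmd\nu$ terms cancel since $\phi\mapsto\int\phi\,\rmd\nu$ is linear, so it suffices to show
\begin{equation*}
\int \overline{\phi}^{+}(x)\,\rmd\mu(x) - \int \phi^{+}(x)\,\rmd\mu(x) \;\geq\; \iint (\overline{\phi}(y) - \phi(y))\,\exp\!\Big(\phi(y) - \phi^{+}(x) - \tfrac{c(x,y)}{\varepsilon}\Big)\rmd\nu(y)\rmd\mu(x)\,.
\end{equation*}

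\textbf{Step 2: Pointwise convexity via Jensen.} For each fixed $x \in \calX$, define the probability measure $p_{\phi}(\cdot \mid x)$ on $\calY$ with density $\exp(\phi(y) - \phi^{+}(x) - c(x,y)/\varepsilon)\nu(y)$ with respect to Lebesgue; this is a valid density because the definition of $\phi^{+}(x)$ is precisely the log-normaliser. Then, multiplying and dividing by $\exp(\phi - \phi^{+}(x))$ inside the integral,
\begin{equation*}
\overline{\phi}^{+}(x) - \phi^{+}(x)
= \log \int_{\calY} \exp\!\big(\overline{\phi}(y) - \phi(y)\big)\,p_{\phi}(y \mid x)\,\rmd y\,.
\end{equation*}
Applying Jensen's inequality to the convex function $t\mapsto e^{t}$ (i.e., $\log \mathbb{E}[e^{X}] \geq \mathbb{E}[X]$) under the probability measure $p_{\phi}(\cdot\mid x)$ gives
\begin{equation*}
\overline{\phi}^{+}(x) - \phi^{+}(x) \;\geq\; \int_{\calY}\big(\overline{\phi}(y) - \phi(y)\big)\,\exp\!\Big(\phi(y) - \phi^{+}(x) - \tfrac{c(x,y)}{\varepsilon}\Big)\rmd\nu(y)\,.
\end{equation*}

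\textbf{Step 3: Integrate in $x$.} Integrating the above inequality against $\mu$ and applying Fubini yields exactly the inequality required at the end of Step 1, completing the proof.

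There is no real obstacle here; the only subtlety is spotting that $\bar\phi^{+}(x) - \phi^{+}(x)$ is naturally a log-expectation under the conditional law $p_\phi(\cdot\mid x)$ obtained from $\pi(\phi,\phi^{+})$, which aligns precisely with the form of $\delta J(\phi)$ in \cref{fct:first-var} and lets Jensen's inequality close the argument cleanly.
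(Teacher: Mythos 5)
Your proof is correct, but it follows a genuinely different route from the paper. The paper derives \cref{lem:concavity-of-J} as an immediate corollary of \cref{lem:broader-lem}, an exact integral-remainder identity expressing the Bregman divergence of \(J\) as the (negated) time-averaged conditional variance
\[
J(\overline{\phi}) - J(\phi) - \langle \delta J(\phi), \overline{\phi} - \phi\rangle = -\frac{1}{2} \int_{0}^{1} \bbE_{x \sim \mu}\bigl[\bbV_{\rho_{t}(\cdot; x)}[\overline{\phi} - \phi]\bigr]~\rmd t\,,
\]
after which non-positivity is just non-negativity of variance. You instead bypass any second-order computation: you isolate the only nonlinear term \(-\int \phi^{+}\rmd\mu\), recognise \(\overline{\phi}^{+}(x) - \phi^{+}(x)\) as a log-expectation under the conditional law \(p_{\phi}(\cdot\mid x)\) (which is exactly the conditional of \(\pi(\phi,\phi^{+})\)), and close with \(\log\bbE[e^{X}] \geq \bbE[X]\). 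This is the first-order Jensen form of the same underlying fact — convexity of the log-partition functional — whereas the paper's \cref{lem:broader-lem} is its second-order (variance/Hessian) form. Your argument is shorter and more self-contained for proving concavity alone; the paper's detour through \cref{lem:broader-lem} pays off because the same identity also delivers the quantitative lower bounds in \cref{lem:Linf-smooth} and \cref{lem:L2nu-smooth}, which pure Jensen does not. Both are valid; yours is the more elementary proof for the concavity statement in isolation.
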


The second property is a lower bound on the Bregman divergence, yielding the regularity condition that the semi-dual \(J\) does not grow arbitrarily.

\begin{lemma}
\label{lem:Linf-smooth}
Let \(\phi, \overline{\phi} \in L^{1}(\nu)\).
Then,
\begin{equation*}
    J(\overline{\phi}) - J(\phi) - \langle \delta J(\phi), \overline{\phi} - \phi\rangle \geq -\frac{\|\overline{\phi} - \phi\|_{L^{\infty}(\calY)}^{2}}{2}~.
\end{equation*}
\end{lemma}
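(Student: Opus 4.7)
The plan is to reduce the inequality to a pointwise statement over $\calX$ about the log–moment generating function of a bounded random variable, and then conclude by Hoeffding's lemma.

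First I would expand the Bregman-like quantity by direct calculation. Using the definition $J(\phi) = \int \phi\, d\nu - \int \phi^{+}\, d\mu$ and the first-variation formula of \cref{fct:first-var}, the $\int(\overline{\phi}-\phi)\,d\nu$ terms cancel, leaving
\begin{equation*}
J(\overline{\phi}) - J(\phi) - \langle \delta J(\phi), \overline{\phi} - \phi\rangle
= -\int_{\calX} \bigl[\overline{\phi}^{+}(x) - \phi^{+}(x)\bigr]\, d\mu(x) + \iint_{\calX\times\calY} (\overline{\phi}-\phi)(y)\, d\pi(\phi,\phi^{+})(x,y)\, .
\end{equation*}
Next, I would introduce, for each $x\in\calX$, the probability measure $q_{\phi,x}$ on $\calY$ with density (w.r.t. $\nu$) equal to $\exp(\phi(y) - \phi^{+}(x) - c(x,y)/\varepsilon)$. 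This is a probability measure by definition of $\phi^{+}$, and the density of $\pi(\phi,\phi^{+})$ w.r.t. $\mu\otimes\nu$ factorises as exactly $q_{\phi,x}(y)$ in the $y$-slice. Rewriting the log-ratio $\overline{\phi}^{+}(x) - \phi^{+}(x)$ yields
\begin{equation*}
\overline{\phi}^{+}(x) - \phi^{+}(x) = \log \int_{\calY} \exp\bigl(\overline{\phi}(y) - \phi(y)\bigr)\, q_{\phi,x}(y)\, d\nu(y) = \log \bbE_{Y \sim q_{\phi,x}}\bigl[\exp\bigl((\overline{\phi}-\phi)(Y)\bigr)\bigr]\, .
\end{equation*}
Substituting back, the whole expression becomes
\begin{equation*}
J(\overline{\phi}) - J(\phi) - \langle \delta J(\phi), \overline{\phi} - \phi\rangle = -\int_{\calX} \Bigl\{ \log \bbE_{q_{\phi,x}}[e^{\overline{\phi}-\phi}] - \bbE_{q_{\phi,x}}[\overline{\phi}-\phi] \Bigr\}\, d\mu(x)\, .
\end{equation*}

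Now I would invoke Hoeffding's lemma pointwise in $x$. Let $M \defeq \|\overline{\phi} - \phi\|_{L^{\infty}(\calY)}$, so $\overline{\phi}-\phi$ takes values in $[-M,M]$. For any random variable $Z$ bounded in an interval of length $2M$, Hoeffding's lemma applied to $Z - \bbE Z$ gives $\log \bbE e^{Z} - \bbE Z \leq (2M)^{2}/8 = M^{2}/2$. Applying this with $Z = (\overline{\phi}-\phi)(Y)$ under $q_{\phi,x}$ yields the uniform pointwise bound
\begin{equation*}
\log \bbE_{q_{\phi,x}}[e^{\overline{\phi}-\phi}] - \bbE_{q_{\phi,x}}[\overline{\phi}-\phi] \leq \tfrac{1}{2}\|\overline{\phi} - \phi\|_{L^{\infty}(\calY)}^{2}\, ,
\end{equation*}
and integrating against $\mu$ (a probability measure) gives the desired lower bound.

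There is no real obstacle here beyond the bookkeeping: the algebraic rewrite that turns the $\phi^{+}$ difference into a log–MGF under $q_{\phi,x}$ is the only place one has to be careful, and after that Hoeffding's lemma is a standard one-line fact. One small subtlety worth flagging in the writeup is that the conclusion is vacuous (both sides $-\infty$ or ill-defined) unless $\overline{\phi}-\phi \in L^{\infty}(\calY)$; under this mild assumption the argument goes through verbatim, and the shift-invariance from \cref{rmk:J_shift_invariance} shows we may assume $\overline{\phi}-\phi$ is centred should one wish to optimise the constant.
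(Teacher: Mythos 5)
Your proof is correct, and it takes a genuinely different route from the paper's. The paper derives \cref{lem:Linf-smooth} as a corollary of an exact integral identity (\cref{lem:broader-lem} in the appendix), which writes the Bregman remainder as $-\frac{1}{2}\int_0^1 \bbE_{x\sim\mu}\bigl[\bbV_{\rho_t(\cdot;x)}[\overline\phi-\phi]\bigr]\,\rmd t$, where $\rho_t(\cdot;x)$ is the conditional $\calY$-law associated with the potential $\tilde\phi_t = \phi + t(\overline\phi-\phi)$ along the segment, and then bounds the pointwise variance crudely by $\|\overline\phi-\phi\|_{L^\infty(\calY)}^2$. You instead expand the Bregman remainder directly, recognise $\overline\phi^{+}(x) - \phi^{+}(x)$ as the log-moment-generating function of $\overline\phi-\phi$ under the tilted conditional $q_{\phi,x}$, and close with Hoeffding's lemma pointwise in $x$ --- no path integral or intermediate identity needed. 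Both arguments land on the same constant $\frac{1}{2}$, and both exploit the same structural fact that $\pi(\phi,\phi^{+})_{\calY\mid\calX}(\cdot\mid x) = q_{\phi,x}\cdot\nu$. What each buys: the paper's variance identity is reused verbatim to prove \cref{lem:concavity-of-J,lem:Linf-smooth,lem:L2nu-smooth}, so one computation serves three statements; your route is shorter and self-contained, and it lays bare the sub-Gaussian mechanism behind the $L^\infty$-smoothness (indeed Hoeffding's lemma is itself proved by Taylor-expanding the log-MGF with a variance bound, so the two arguments are morally the same estimate packaged at different levels). Your closing remark that the inequality is vacuous when $\overline\phi-\phi\notin L^\infty(\calY)$ is a fair point the paper leaves implicit.
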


\subsection{From semi-dual gradient ascent to a new class of methods for eOT}
\label{sec:new-class:generalise}

\cref{lem:concavity-of-J} and \cref{fct:first-var} implies that one could ostensibly use a gradient ascent-like procedure to find a maximiser of the semi-dual \(J\) and consequently solve the eOT problem to within a desired tolerance.
More precisely, from an initialisation \(\phi^{0} \in L^{1}(\nu)\), we can obtain a sequence of iterates \(\{\phi^{n}\}_{n \geq 1}\) based on the recursion
\begin{equation*}
    \phi^{n + 1} = \sfM^{\textsf{SGA}}(\phi^{n}; \eta) := \phi^n + \eta \cdot \delta J(\phi^{n})~.\tag{\textsf{SGA}}\label{eqn:sga-update}
\end{equation*}

The update \(\sfM^{\textsf{SGA}}\) was previously considered by \cite{genevay2016stochastic} for discrete spaces \(\calX\) and \(\calY\), where \(\phi\) can be represented as a finite-dimensional vector.
In this setting, \cref{lem:Linf-smooth} implies a standard notion of smoothness for the semi-dual \(J\) \cite[Chap. 2]{nesterov2018lectures} by the monotonicity of finite-dimensional norms.
This consequently results in an \emph{assumption-free} non-asymptotic convergence guarantee for \ref{eqn:sga-update} with \(\eta < 2\).

When generalising to continuous spaces, a temporary setback towards establishing such rates of convergence for this update is that \(\|\phi\|_{L^{\infty}(\calY)} \leq \|\phi\|_{L^{2}(\calY)}\) is not generally true, thus leading to an ``incompatibility''.
We rectify this by instead adopting an alternate perspective on \ref{eqn:sga-update} as minimising the ``discrepancy'' between the \(\calY\)-marginal of \(\pi(\phi, \phi^{+})\) and \(\nu\).
From the form of \(\delta J(\phi)\) in \cref{eqn:first-var-J}, we can rewrite the update \ref{eqn:sga-update} as
\begin{equation}
\label{eqn:sga_explicit}
    \sfM^{\textsf{SGA}}(\phi; \eta) = \phi + \eta \cdot \left(\nu - \pi(\phi, \phi^{+})_{\calY}\right)~.
\end{equation}
Note that a fixed point \(\phi^{\star}\) of this update satisfies \(\pi(\phi^{\star}, (\phi^{\star})^{+})_{\calY} = \nu\).
This also corresponds to a maximiser of \(J\) since \(\pi(\phi^{\star}, (\phi^{\star})^{+})_{\calY} = \nu\) which is equivalent to \(\delta J(\phi^{\star}) = 0\).

Related to \cref{eqn:sga_explicit}, the Sinkhorn algorithm corresponds to an update map \(\sfM^{\textsf{Sinkhorn}}\) \citep[Lem. 2]{karimi2024sinkhorn} that is defined as
\begin{equation*}
    \sfM^{\textsf{Sinkhorn}}(\phi) := \phi - \left(\log \frac{\pi(\phi, \phi^{+})_{\calY}}{\nu}\right)~.\tag{\textsf{Sinkhorn}}\label{eqn:sinkhorn-update}
\end{equation*}
Through a little observation, it can be seen that both \(\sfM^{\textsf{SGA}}\) and \(\sfM^{\textsf{Sinkhorn}}\) are instantiations of a more general class of updates described below.
Let \(\Phi : L^{1}(\calY) \to L^{\infty}(\calY)\) be an operator that returns a positive function.
In this setting, it takes a density function (\(L^{1}(\calY)\)) and returns a function that is not necessarily a density (\(L^{\infty}(\calY)\)).
Then, we define the update \(\sfM^{\Phi\textsf{-match}}\) as
\begin{equation*}
    \sfM^{\Phi\textsf{-match}}(\phi; \eta) = \phi - \eta \cdot \left(\log \Phi(\pi(\phi, \phi^{+})_{\calY}) - \log \Phi(\nu) \right)~.\tag{\(\Phi\)\textsf{-match}}\label{eqn:phi-match-update}
\end{equation*}
From this, we see that (1) when \(\Phi(f) : f \mapsto e^{f}\), this recovers \ref{eqn:sga-update}; and (2) when \(\Phi(f) : f \mapsto f\), this recovers \ref{eqn:sinkhorn-update} with a general step size \(\eta\) (also called \(\eta\)-Sinkhorn in \cite{karimi2024sinkhorn}).
In the next section, we provide different interpretations of \ref{eqn:phi-match-update} in the primal space, i.e., over the space of distributions \(\pi\in\calP(\calX \times \calY)\) as opposed to dual potentials \(\phi\in L^{1}(\nu)\) as we do here.
These interpretations not only lend to rates of convergence for \ref{eqn:sga-update}, but also a related collection of methods that are instances of the general \ref{eqn:phi-match-update} framework.

\section{Interpretations of \textsf{\(\Phi\)-match}}
\label{sec:measure_interpretation}
\newcommand{\projX}{\mathsf{project}_{\calX, \mu}}
\newcommand{\projY}{\mathsf{project}_{\calY, \nu}}
\newcommand{\rootKL}{\mathsf{root}_{\calX, \mu}}

The interpretations of \ref{eqn:phi-match-update} that we discuss here are motivated by recent work in understanding the Sinkhorn algorithm (\ref{eqn:sinkhorn-update}) \citep{aubin2022mirror,karimi2024sinkhorn}.
In essence, these prior works view the Sinkhorn algorithm as minimising \(\KLdist(\pi_{\calY}, \nu)\) over a subset \(\calQ\) of joint distributions over \(\calX \times \calY\).
This is defined as
\begin{equation}
\label{eqn:joint-constraint}
    \calQ := \left\{\pi : \exists~ \phi \in L^{1}(\nu) \text{ such that } \pi(x, y) = \exp\left(\phi(y) - \phi^{+}(x) - \frac{c(x, y)}{\varepsilon}\right) \mu(x)\nu(y)\right\}~.
\end{equation}
While \(\pi^{\star}\) belongs to \(\calQ\), this constrained set is \emph{not} a convex subset of \(\calP(\calX \times \calY)\) owing to the factorisation structure.
Despite this, an intriguing observation about the Sinkhorn algorithm is that it ensures the iterates lie in this set \(\calQ\).
Here, we show that \ref{eqn:phi-match-update} also operates in the same way while minimising an objective that is not the KL divergence but instead a discrepancy which depends on \(\Phi\).
We specifically show that \ref{eqn:phi-match-update} can be interpreted in the following two ways: (1) as an alternating projection scheme and (2) as a local greedy method analogous to gradient descent / mirror descent.
While \ref{eqn:phi-match-update} is derived from the semi-dual, these interpretations do not involve the semi-dual and solely operate in \(\calP(\calX \times \calY)\).
This leads to rates of convergence for \ref{eqn:sga-update} and its kernelised version (\ref{eqn:kernel-sga-update}) that we introduce later on.

\paragraph{\ref{eqn:phi-match-update} as iterative projections on \(\calP(\calX \times \calY)\)}

Consider the following projection operations.
\begin{subequations}
\begin{align}
\projY(\pi; \Phi) &:= \argmin_{\bar{\pi}} \left\{\KLdist(\bar{\pi}\Vert \pi): \bar{\pi}_{\calY} \propto \pi_{\calY} \cdot \frac{\Phi(\nu)}{\Phi(\pi_{\calY})}\right\}, \label{eqn:measure_1}\\
\projX(\pi', \pi; \eta) &:=\argmin_{\bar{\pi}} \left\{\eta \cdot \KLdist(\bar{\pi}\Vert\pi')+(1-\eta) \cdot \KLdist(\bar{\pi}\Vert \pi): \bar{\pi}_{\calX}=\mu\right\}~.
\label{eqn:measure_2} 
\end{align}
\end{subequations}
For a given \(\pi \in \calP(\calX \times \calY)\), \(\projY\) can be seen as correcting the \(\calY\)-marginal of \(\pi\) towards \(\nu\), and the nature of this correction depends on \(\Phi\).
On the other hand, \(\projX\) finds a ``midpoint'' (which depends on the stepsize $\eta \in [0, 1]$) while ensuring that the \(\calX\)-marginal is \(\mu\).
In the following lemma, we show that if \(\pi \in \calQ\) (corresponding to some \(\phi \in L^{1}(\nu))\), then applying the projections \cref{eqn:measure_1,eqn:measure_2} successively is equivalent to updating \(\phi\) using \ref{eqn:phi-match-update}.

\begin{lemma}
\label{lem:KL_primal_projection}
Let \(\phi^{0} \in L^{1}(\nu)\) and let \(\{\phi^{n}\}_{n \geq 1}\) be the sequence of potentials obtained as \(\phi^{n + 1} = \sfM^{\Phi\emph{\textsf{-match}}}(\phi^{n}; \eta)\) for \(\eta in [0, 1]\).
Then, the sequence of distributions \(\{\pi^{n}\}_{n \geq 0}\) where \(\pi^{n} = \pi(\phi^{n}, (\phi^{n})^{+})\) satisfy for every \(n \geq 0\)
\begin{equation*}
    \pi^{n + 1} = \projX(\pi^{n + \nicefrac{1}{2}}, \pi^{n}; \eta) \quad \text{where}~ \pi^{n + \nicefrac{1}{2}} = \projY(\pi^{n}; \Phi)~.
\end{equation*}
\end{lemma}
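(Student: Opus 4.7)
The plan is to unwind both projections in closed form and show that their composition, when the input is $\pi^n = \pi(\phi^n, (\phi^n)^+)$, yields exactly $\pi(\phi^{n+1}, (\phi^{n+1})^+)$ for $\phi^{n+1} = \sfM^{\Phi\textsf{-match}}(\phi^n; \eta)$. The two closed forms I will use are standard facts about I-projections: minimising $\KLdist(\cdot \| \pi)$ under a fixed $\calY$-marginal constraint preserves the conditional $\pi(x\mid y)$, and a weighted sum $\eta\KLdist(\cdot\|\pi')+(1-\eta)\KLdist(\cdot\|\pi)$ equals $\KLdist(\cdot\|q)$ plus a constant, where $q\propto (\pi')^{\eta}\pi^{1-\eta}$ is the geometric mixture.

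First I would compute $\pi^{n+\nicefrac{1}{2}} := \projY(\pi^n;\Phi)$. Using the conditional-preservation principle for \cref{eqn:measure_1}, we get $\pi^{n+\nicefrac{1}{2}}(x,y) = \pi^n(x\mid y)\,\bar\pi_{\calY}(y)$, where $\bar\pi_{\calY}$ is the normalised version of $\pi^n_{\calY}\cdot \Phi(\nu)/\Phi(\pi^n_{\calY})$. Consequently, up to a $y$-independent (in fact, constant) normalisation,
\begin{equation*}
\pi^{n+\nicefrac{1}{2}}(x,y) \;\propto\; \pi^n(x,y)\cdot \frac{\Phi(\nu)(y)}{\Phi(\pi^n_{\calY})(y)}\,.
\end{equation*}

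Next I would apply the geometric-mixture identity to \cref{eqn:measure_2}, which recasts it as minimising $\KLdist(\bar\pi\|q)$ over $\bar\pi_{\calX}=\mu$, where $q\propto (\pi^{n+\nicefrac{1}{2}})^{\eta}(\pi^n)^{1-\eta}$. Conditional preservation on the $\calX$-side then gives $\pi^{n+1}(x,y)=q(y\mid x)\mu(x)$. Plugging in the explicit form $\rmd\pi^n(x,y) = \exp(\phi^n(y)-(\phi^n)^+(x)-c(x,y)/\varepsilon)\rmd\mu(x)\rmd\nu(y)$ and combining with the ratio computed above, the $x$- and $y$-dependent pieces of $q$ decouple (modulo the cost $c(x,y)/\varepsilon$) as
\begin{equation*}
q(x,y) \;\propto\; \exp\!\left(\phi^n(y) + \eta\bigl(\log\Phi(\nu)(y) - \log\Phi(\pi^n_{\calY})(y)\bigr) - (\phi^n)^+(x) - \frac{c(x,y)}{\varepsilon}\right)\mu(x)\nu(y)\,.
\end{equation*}
The $y$-dependent combination in the exponent is precisely $\phi^{n+1}(y) = \sfM^{\Phi\textsf{-match}}(\phi^n;\eta)(y)$, which is the key matching step.

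Finally I would carry out the $\calX$-renormalisation $\bar\pi(x,y)=q(x,y)\mu(x)/q_{\calX}(x)$. The integral defining $q_{\calX}(x)$ is, by construction, $\mu(x)\exp\bigl((\phi^{n+1})^+(x)-(\phi^n)^+(x)\bigr)$ times the global normalising constant, so dividing cancels the stray $-(\phi^n)^+(x)$ and replaces it by $-(\phi^{n+1})^+(x)$, leaving
\begin{equation*}
\rmd\bar\pi(x,y) \;=\; \exp\!\left(\phi^{n+1}(y)-(\phi^{n+1})^+(x)-\frac{c(x,y)}{\varepsilon}\right)\rmd\mu(x)\rmd\nu(y) \;=\; \rmd\pi^{n+1}(x,y)\,,
\end{equation*}
as claimed. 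The only real obstacle is bookkeeping of the various global normalising constants and verifying that they indeed collapse into the logsumexp defining $(\phi^{n+1})^+$; the shift-invariance of $J$ (\cref{rmk:J_shift_invariance}) is what makes this consistent, since any residual additive constant in $\phi^{n+1}$ is immaterial for the identification of $\pi^{n+1}$.
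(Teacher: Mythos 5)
Your proposal is correct and takes essentially the same route as the paper: it preserves the $\calX\mid\calY$ conditional in the first projection, identifies the resulting half-step iterate, and then handles the second projection before matching the factorisation. The only organisational difference is that the paper invokes \citet[Cor. B.1]{karimi2024sinkhorn} to state the closed form of $\projX$ directly, whereas you re-derive it via the geometric-mixture identity for the interpolated KL objective followed by $\calY\mid\calX$ conditional preservation; these yield the same per-$x$ geometric mean of conditionals, so your proof is a self-contained version of the paper's argument.
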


\paragraph{\ref{eqn:phi-match-update} as a local greedy method}

For a map \(\calF : L^{1}(\calX \times \calY) \to L^{\infty}(\calX \times \calY)\), define
\begin{equation}
\label{eqn:proximal-point}
    \rootKL(\pi; \calF, \eta) := \argmin_{\bar{\pi}} \left\{\langle \calF(\pi),\bar{\pi}-\pi \rangle + \eta^{-1} \cdot \KLdist(\bar{\pi}\Vert \pi) : \bar{\pi}_{\calX} = \mu \right\}~.
\end{equation}
When \(\calF\) is the first variation of a functional that measures the discrepancy between the \(\calY\)-marginal of \(\pi\) and \(\nu\), \(\rootKL(\pi; \calF, \eta)\) can be viewed as minimising a local first-order approximation of this functional, thereby approximately matching the \(\calY\)-marginal, while restricting the $\calX$-marginal to be \(\mu\).
The following lemma states that the successive projections defined in \cref{eqn:measure_1,eqn:measure_2} correspond to a root finding procedure for \(\calF \leftarrow \calV_{\Phi}\) defined as
\begin{equation}
\label{eqn:proximal-point-V}
    \calV_{\Phi}(\pi)(x, y) = \log \Phi(\pi_{\calY})(y) - \log \Phi(\nu)(y)~.
\end{equation}

\begin{lemma}
\label{lem:proximal-point}
Let \(\pi \in \calP(\calX \times \calY)\) be such that \(\pi_{\calX} = \mu\).
Then for \(\eta \in [0, 1]\),
\begin{equation*}
    \projX(\projY(\pi; \Phi), \pi; \eta) = \rootKL(\pi; \calV_{\Phi}, \eta)~. 
\end{equation*}
\end{lemma}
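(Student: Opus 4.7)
The plan is to reduce both sides to the same variational problem: the KL projection of a common measure onto the set $\{\bar\pi : \bar\pi_\calX = \mu\}$. To do this I will compute closed forms for $\projY$ and rewrite the two $\projX$-style objectives (geometric interpolation of two KLs, and linear-in-$\bar\pi$ plus one KL) as KL divergences with respect to explicit tilted measures, plus constants that do not depend on $\bar\pi$.

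First, I would simplify $\projY(\pi;\Phi)$. The constraint fixes the $\calY$-marginal to $q(y) = Z_1^{-1}\pi_\calY(y)\Phi(\nu)(y)/\Phi(\pi_\calY)(y)$ with $Z_1 = \int \pi_\calY(y)\Phi(\nu)(y)/\Phi(\pi_\calY)(y)\,\rmd y$. The minimizer of $\KLdist(\bar\pi\Vert\pi)$ over couplings with prescribed $\calY$-marginal $q$ is the disintegration $\bar\pi(x,y) = \pi(x\mid y)q(y)$, yielding
\begin{equation*}
    \projY(\pi;\Phi)(x,y) \;=\; \frac{1}{Z_1}\,\pi(x,y)\,\frac{\Phi(\nu)(y)}{\Phi(\pi_\calY)(y)}.
\end{equation*}

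Next, for $\projX(\pi',\pi;\eta)$ with $\pi'=\projY(\pi;\Phi)$, I collect the two KL terms:
\begin{equation*}
    \eta\KLdist(\bar\pi\Vert\pi')+(1-\eta)\KLdist(\bar\pi\Vert\pi) \;=\; \KLdist\bigl(\bar\pi \,\big\Vert\, \tilde\pi\bigr) \;-\; \log \tilde Z,
\end{equation*}
where $\tilde\pi \propto (\pi')^{\eta}\pi^{1-\eta}$ with normalizer $\tilde Z$. Substituting the formula from the previous step and absorbing $Z_1^{-\eta}$ into the normalization gives $\tilde\pi(x,y) \propto \pi(x,y)\bigl(\Phi(\nu)(y)/\Phi(\pi_\calY)(y)\bigr)^{\eta}$. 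Likewise, for $\rootKL(\pi;\calV_\Phi,\eta)$, since $\calV_\Phi(\pi)(x,y) = V(y) := \log\Phi(\pi_\calY)(y)-\log\Phi(\nu)(y)$ depends only on $y$, I complete the square:
\begin{equation*}
    \langle \calV_\Phi(\pi),\bar\pi-\pi\rangle + \eta^{-1}\KLdist(\bar\pi\Vert\pi) \;=\; \eta^{-1}\KLdist\bigl(\bar\pi\,\big\Vert\,\hat\pi\bigr) + \text{const},
\end{equation*}
where $\hat\pi(x,y)\propto \pi(x,y)e^{-\eta V(y)} = \pi(x,y)\bigl(\Phi(\nu)(y)/\Phi(\pi_\calY)(y)\bigr)^{\eta}$. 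Thus $\tilde\pi$ and $\hat\pi$ coincide as probability densities.

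Finally, both optimization problems reduce to minimizing $\KLdist(\bar\pi\Vert\tilde\pi)$ over $\{\bar\pi:\bar\pi_\calX=\mu\}$. Strict convexity of the KL divergence in $\bar\pi$ together with the convexity of this marginal constraint set yields a unique minimizer (the disintegration $\tilde\pi(y\mid x)\mu(x)$), so $\projX(\projY(\pi;\Phi),\pi;\eta) = \rootKL(\pi;\calV_\Phi,\eta)$. The only delicate step is the bookkeeping of normalizing constants when forming $(\pi')^\eta\pi^{1-\eta}$, which I expect to be the main place to be careful although not a conceptual obstacle; the condition $\pi_\calX=\mu$ is not used in the algebraic identity itself but fits the intended iterative use of the lemma.
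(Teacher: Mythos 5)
Your proposal is correct and follows essentially the same route as the paper: compute the closed form of $\projY(\pi;\Phi)$, then show the $\projX$ objective and the $\rootKL$ objective differ only by additive constants (and a positive scalar) in $\bar\pi$. The paper writes this out by directly manipulating the $\rootKL$ integrand into the geometric-interpolation form $\iint\bar\pi\log[(\bar\pi/\tilde\pi)^\eta(\bar\pi/\pi)^{1-\eta}]$, whereas you normalize each objective to a single $\KLdist(\bar\pi\Vert\cdot)$ against an explicit exponentially tilted / geometric-mean measure and observe the two tilted measures coincide; this is the same identity with slightly different bookkeeping.
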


The proofs of \cref{lem:KL_primal_projection,lem:proximal-point} are given in \cref{sec:prf:KL_primal_projection,sec:prf:proximal-point} respectively.
This leads to the following straightforward corollary about \ref{eqn:phi-match-update} in the manner of \cref{lem:KL_primal_projection}.
\begin{corollary}
\label{corr:phi_rewrite}
Let \(\phi^{0} \in L^{1}(\nu)\) and let \(\{\phi^{n}\}_{n \geq 1}\) be the sequence of potentials obtained as \(\phi^{n + 1} = \sfM^{\Phi\emph{\textsf{-match}}}(\phi^{n}; \eta)\) for \(\eta \in [0, 1]\).
Then, the sequence of distributions \(\{\pi^{n}\}_{n \geq 0}\) where \(\pi^{n} = \pi(\phi^{n}, (\phi^{n})^{+})\) satisfy for every \(n \geq 0\)
\begin{equation*}
    \pi^{n + 1} = \rootKL(\pi^{n}; \calV_{\Phi}, \eta)~.
\end{equation*}
\end{corollary}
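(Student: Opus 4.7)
The plan is to chain together the two lemmas established immediately before the corollary. \cref{lem:KL_primal_projection} rewrites the \ref{eqn:phi-match-update} iteration in the primal space as the composition of the two projections \(\projY\) followed by \(\projX\), while \cref{lem:proximal-point} identifies this very composition with the root-finding operator \(\rootKL\) applied with the specific map \(\calV_{\Phi}\). The corollary therefore should follow by simply composing these two identities, provided the hypothesis of \cref{lem:proximal-point} is satisfied along the iteration, namely that \(\pi^n_{\calX} = \mu\) for every \(n\).

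First I would verify this marginal condition. By the construction in \cref{eqn:primal_pi}, the joint density \(\pi(\phi^n, (\phi^n)^{+})\) has \(\calX\)-marginal equal to \(\mu\) for any \(\phi^n \in L^{1}(\nu)\); this is the reason for the definition of the operation \(\phi \mapsto \phi^{+}\) in \cref{sec:semi-dual}. Consequently, \(\pi^n_{\calX} = \mu\) holds for all \(n \geq 0\) regardless of the initialisation \(\phi^0\), so the prerequisite for invoking \cref{lem:proximal-point} at any iterate is automatic.

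Next I would apply \cref{lem:KL_primal_projection} to the iteration \(\phi^{n+1} = \sfM^{\Phi\textsf{-match}}(\phi^n; \eta)\) with \(\eta \in [0, 1]\). This yields
\begin{equation*}
    \pi^{n+1} \;=\; \projX\!\left(\projY(\pi^n; \Phi),\; \pi^n;\; \eta\right).
\end{equation*}
Invoking \cref{lem:proximal-point} with \(\pi \leftarrow \pi^n\) (whose \(\calX\)-marginal is \(\mu\) from the previous step), the right-hand side above equals \(\rootKL(\pi^n; \calV_{\Phi}, \eta)\), which is the desired identity.

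There is effectively no obstacle here beyond bookkeeping, since the two nontrivial lemmas do all the work; the only thing to be careful about is the stated range \(\eta \in [0,1]\), which matches the range in both \cref{lem:KL_primal_projection} and \cref{lem:proximal-point} so that both lemmas may be applied directly without any modification.
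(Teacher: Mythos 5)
Your argument is correct and is exactly the route the paper intends: the corollary is presented as an immediate consequence of chaining \cref{lem:KL_primal_projection} and \cref{lem:proximal-point}, and your check that \(\pi^n_{\calX} = \mu\) holds by construction of \(\pi(\phi^n, (\phi^n)^+)\) is precisely the bookkeeping needed to make the chain valid.
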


The above corollary shows that while the updates \ref{eqn:phi-match-update} are themselves expressed in terms of the potentials \(\phi\), the joint distributions are automatically constrained in the set $\calQ$ and solve \cref{eqn:proximal-point} at every iteration.

Recall from earlier that \ref{eqn:phi-match-update} with \(\Phi : f \mapsto f\) and \(\eta = 1\) coincides with \ref{eqn:sinkhorn-update}.
In this setting, \cref{lem:KL_primal_projection} recovers the classical iterative Bregman projection interpretation of the Sinkhorn method \cite[Remark 4.8]{peyre2019computational}.
The map \(\calV_{\Phi}\) in \cref{eqn:proximal-point-V} is \(\log \frac{\pi_{\calY}}{\nu}\) in this case, which corresponds to the first variation of the functional \(\rho \mapsto \KLdist(\rho_{\calY} \| \nu)\).
This is of course, up to an additive constant which cancels out in the inner product in \cref{eqn:proximal-point}.
The equivalence in \cref{corr:phi_rewrite} for the Sinkhorn method was originally derived in \citet[Prop. 5]{aubin2022mirror} and generalised to an arbitrary step size \(\eta \in (0, 1)\) in \citet[Lem. 1]{karimi2024sinkhorn}.
Besides the KL divergence, another popular measure of discrepancy between distributions is the \(\chi^{2}\) divergence.
Setting \(\Phi(f) = \exp\left(\frac{f}{\nu} - 1\right)\), and \(\calV_{\Phi}\) results in \(\calV_{\Phi}\) coinciding with \(\frac{\pi_{\calY}}{\nu} - 1\), which is the first variation (up to an additive constant) of the functional \(\rho \mapsto \frac{1}{2}\chi^{2}(\rho_{\calY} \| \nu)\).

\subsection{Connection between \textsf{SGA} and Maximum Mean Discrepancy}
The \ref{eqn:phi-match-update} abstraction also permits a similar interpretation of \ref{eqn:sga-update}, which corresponds to \(\Phi : f \mapsto e^{f}\) in \ref{eqn:phi-match-update}.
The map \(\calV_{\Phi}\) (\cref{eqn:proximal-point-V}) in this case is \(\pi_{\calY} - \nu\), which is the first variation of another notion of discrepancy between \(\pi_{\calY}\) and \(\nu\) given by a \emph{Maximum Mean Discrepancy} (abbrev. MMD).
Let \(k : \calZ \times \calZ \to \bbR\) be a positive-definite kernel, and let \(\calH_{k}\) be its RKHS.
We refer the reader to \citet[Chap. 4]{christmann2008support} for a more detailed exposition about kernels and their RKHS.
For \(\xi \in \calP(\calZ)\), the mean function w.r.t. kernel \(k\) is defined as
\begin{equation*}
    \frakm_{k}(\xi)(y) := \int_{\calY} k(y, y') \cdot \rmd\xi(y')~.
\end{equation*}
The MMD (for a kernel \(k\)) \citep{gretton2006kernel} is defined as
\begin{equation*}
    \mathrm{MMD}_{k}(\xi, \rho) = \sup_{\substack{f \in \calH_{k} \\ \|f\|_{\calH_{k}} \leq 1}} \left|\bbE_{\xi}[f] - \bbE_{\rho}[f]\right| = \|\frakm_{k}(\xi) - \frakm_{k}(\rho)\|_{\calH_{k}}~.
\end{equation*}
The first variation of \(\xi \mapsto \frac{1}{2}\mathrm{MMD}_{k}(\xi, \rho)^{2}\) is given by \(\frakm_{k}(\xi) - \frakm_{k}(\rho)\) \citep[Lem. 1]{mroueh19sobolev}.
Moreover, for \emph{characteristic} kernels \(k\) \citep{fukumizu2004dimensionality}, we have that \(\xi \mapsto \frakm_{\xi}\) is a one-to-one mapping, which establishes that \(\mathrm{MMD}_{k}\) for such kernels is a metric over the space of probability measures.
Examples of characteristic kernels are the identity, Gaussian, and Laplace kernels.

Going back to the map \(\calV_{\Phi}\) for \ref{eqn:sga-update}, this coincides with the first variation of 
\[\rho \mapsto \calL_{k}(\rho_{\calY}, \nu) := \frac{1}{2}\mathrm{MMD}_{k_{\mathrm{Id}}}(\rho_{\calY}, \nu)^{2}\] for the identity kernel \(k_{\mathrm{Id}}\) defined as \(k_{\mathrm{Id}}(y, y') = 1\) iff \(y = y'\), since \(\frakm_{k}(\xi)(y) = \xi(y)\).
This also motivates the consideration of a more general update \(\sfM^{k\textsf{-SGA}}\) for iteratively minimising \(\calL_{k}(\cdot; \nu)\) for any characteristic kernel, and results in the following update:
\begin{equation*}
    \sfM^{k\textsf{-SGA}}(\phi; \eta) := \phi + \eta \cdot \left\{\frakm_{k}(\nu) - \frakm_{k}(\pi(\phi, \phi^{+})_{\calY})\right\}~.\tag{\(k\)\textsf{-SGA}}\label{eqn:kernel-sga-update}
\end{equation*}

Due to the generality of the abstraction \ref{eqn:phi-match-update}, we can also view \ref{eqn:kernel-sga-update} as an instance of \ref{eqn:phi-match-update} with the choice \(\Phi_{k}(f) : f \mapsto e^{\frakm_{k}(f)}\) where we overload \(\frakm_k(f) := \int_{\calY} k(y, y') \cdot f(y')\rmd y'\).
As a consequence, \cref{corr:phi_rewrite} shows that the sequence of iterates \(\{\phi^{n}\}_{n \geq 1}\) formed by \ref{eqn:kernel-sga-update} results in a sequence of distributions \(\{\pi(\phi^{n}, (\phi^{n})^{+})\}_{n \geq 1}\) formed by iteratively applying \(\rootKL\) with \(\calF \leftarrow \calV_{\Phi_{k}}\). This implication is essential for deriving non-asymptotic rates of \ref{eqn:kernel-sga-update} in the next subsection.

\paragraph{\textsf{\(k\)-SGA} as kernelised \textsf{SGA}}
Since the map \(\frakm_{k}\) is additive, i.e., \(\frakm_{k}(f_{1} + f_{2}) = \frakm_{k}(f_{1}) + \frakm_{k}(f_{2})\) for suitable functions \(f_{1}\) and \(f_{2}\), we have
\begin{align*}
    \frakm_{k}(\delta J(\phi))(y) &= \int_{\calY} k(y, y') \cdot \delta J(\phi)(y') \rmd y' \\
    &= \int_{\calY} k(y, y') \cdot (\nu(y') - \pi(\phi, \phi^{+})_{\calY}(y')) \rmd y' \\
    &= \frakm_{k}(\nu)(y) - \frakm_{k}(\pi(\phi, \phi^{+})_{\calY})(y)~.
\end{align*}
Then, \ref{eqn:kernel-sga-update} can be equivalently written as
\begin{equation*}
    \sfM^{k\textsf{-SGA}}(\phi; \eta) = \phi + \eta \cdot \frakm_{k}(\delta J(\phi))
\end{equation*}
and can be viewed as performing kernel smoothing \(\delta J\) before using it for the update.

\subsubsection{Deriving non-asymptotic rates for \(k\)\textsf{-SGA}}
\label{sec:k_sga_rate}

With the new \(\calL_{k}\) objective, we derive non-asymptotic rates for \ref{eqn:kernel-sga-update} for a bounded, positive-definite kernel \(k\) here.
We state a key lemma from \cite{aubin2022mirror} which characterises the growth of \(\calL_{k}(\cdot; \nu)\) relative to the entropy functional \(H : \xi \mapsto \int_{\calY} \rmd\xi \log \rmd\xi\).
This, along with the convexity of \(\calL_{k}(\cdot; \nu)\), is instrumental in establishing a rate of convergence for \ref{eqn:kernel-sga-update}.

\begin{proposition}[{\cite[Prop. 14]{aubin2022mirror}}]
\label{prop:aubin-smooth}
Let \(k : \calY \times \calY \to \bbR\) be a bounded, positive definite kernel where \(c_{k} := \sup_{y \in \calY} k(y, y) < \infty\).
Then for any \(\xi, \overline{\xi} \in \calP(\calY)\),
\begin{equation*}
    0 \leq \left\langle \delta \calL_{k}(\overline{\xi}; \nu) - \delta \calL_{k}(\xi; \nu), \rmd\overline{\xi} - \rmd\xi\right\rangle \leq 2c_{k} \cdot \left\langle \delta H(\overline{\xi}) - \delta H(\xi), \rmd\overline{\xi} - \rmd\xi\right\rangle~.
\end{equation*}
Consequently,
\begin{equation*}
    0 \leq \calL_{k}(\overline{\xi}; \nu) - \calL_{k}(\xi; \nu) - \langle \delta \calL_{k}(\xi; \nu), \rmd\overline{\xi} - \rmd\xi\rangle \leq 2c_{k} \cdot \KLdist(\overline{\xi} \| \xi)~.
\end{equation*}
\end{proposition}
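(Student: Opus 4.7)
The plan is to identify the monotonicity gap on the left-hand side of the first inequality as $\mathrm{MMD}_k(\overline{\xi}, \xi)^2$ and the gap on the right-hand side as the symmetric KL divergence, and then reduce the inequality to the classical chain $\mathrm{MMD}_k \lesssim \mathrm{TV} \lesssim \sqrt{\KLdist}$. The ``consequently'' statement then follows because $\calL_k$ is a quadratic functional of $\xi$ via the linear embedding $\frakm_k$, so its Bregman divergence equals exactly $\tfrac{1}{2}\mathrm{MMD}_k^2$, to which the same bound applies.

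First I would compute the first variation. Since $\frakm_k$ is linear in $\xi$, differentiating $\calL_k(\xi; \nu) = \tfrac{1}{2}\|\frakm_k(\xi) - \frakm_k(\nu)\|_{\calH_k}^2$ yields $\delta\calL_k(\xi; \nu)(y) = \frakm_k(\xi)(y) - \frakm_k(\nu)(y)$. Pairing the difference in first variations against $\rmd\overline{\xi} - \rmd\xi$ and using $\int k(y, y')\,\rmd\chi(y') = \langle k(\cdot, y), \frakm_k(\chi)\rangle_{\calH_k}$ gives
\begin{equation*}
    \langle \delta\calL_k(\overline{\xi}; \nu) - \delta\calL_k(\xi; \nu), \rmd\overline{\xi} - \rmd\xi\rangle = \|\frakm_k(\overline{\xi} - \xi)\|_{\calH_k}^2 = \mathrm{MMD}_k(\overline{\xi}, \xi)^2,
\end{equation*}
which is non-negative by positive definiteness of $k$. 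A direct calculation using $\delta H(\xi) = \log\xi + 1$ shows that $\langle \delta H(\overline{\xi}) - \delta H(\xi), \rmd\overline{\xi} - \rmd\xi\rangle = \KLdist(\overline{\xi}\|\xi) + \KLdist(\xi\|\overline{\xi})$.

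To connect the two gaps, I use the reproducing property: for any $f \in \calH_k$ with $\|f\|_{\calH_k} \leq 1$, the inequality $|f(y)|^2 = |\langle f, k(\cdot, y)\rangle_{\calH_k}|^2 \leq k(y, y) \leq c_k$ yields $\|f\|_\infty \leq \sqrt{c_k}$. Taking the supremum over the unit ball gives $\mathrm{MMD}_k(\overline{\xi}, \xi) \leq \sqrt{c_k}\int|\rmd\overline{\xi} - \rmd\xi|$. Squaring and applying Pinsker's inequality in the form $\left(\int|\rmd\overline{\xi} - \rmd\xi|\right)^2 \leq 2\KLdist(\overline{\xi}\|\xi)$ yields $\mathrm{MMD}_k(\overline{\xi}, \xi)^2 \leq 2c_k\KLdist(\overline{\xi}\|\xi)$. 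Since $\KLdist(\xi\|\overline{\xi}) \geq 0$, this implies the stated bound $\mathrm{MMD}_k^2 \leq 2c_k\bigl[\KLdist(\overline{\xi}\|\xi) + \KLdist(\xi\|\overline{\xi})\bigr]$.

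For the ``consequently'' statement, I would exploit the quadratic structure of $\calL_k$. Applying the identity $\tfrac{1}{2}\|a\|^2 - \tfrac{1}{2}\|b\|^2 - \langle b, a - b\rangle_{\calH_k} = \tfrac{1}{2}\|a - b\|_{\calH_k}^2$ with $a = \frakm_k(\overline{\xi}) - \frakm_k(\nu)$ and $b = \frakm_k(\xi) - \frakm_k(\nu)$ gives
\begin{equation*}
    \calL_k(\overline{\xi}; \nu) - \calL_k(\xi; \nu) - \langle \delta\calL_k(\xi; \nu), \rmd\overline{\xi} - \rmd\xi\rangle = \tfrac{1}{2}\mathrm{MMD}_k(\overline{\xi}, \xi)^2.
\end{equation*}
The lower bound $\geq 0$ is immediate, and the upper bound follows from the MMD--KL estimate above: $\tfrac{1}{2}\mathrm{MMD}_k^2 \leq c_k\KLdist(\overline{\xi}\|\xi) \leq 2c_k\KLdist(\overline{\xi}\|\xi)$. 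The ``hard part'' is really only careful bookkeeping of constants in the Pinsker chain; note that the stated constant $2c_k$ is not the tightest achievable (we actually get $c_k$), but it matches the prescribed form.
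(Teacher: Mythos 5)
Your proof is correct. Note that the paper does not reprove this proposition — it is imported verbatim as \citet[Prop.\ 14]{aubin2022mirror} — so there is no in-paper argument to compare against. Your chain (identify the monotonicity gap as $\|\frakm_k(\overline{\xi}-\xi)\|_{\calH_k}^2=\mathrm{MMD}_k(\overline{\xi},\xi)^2$; identify the entropy gap as symmetric KL; bound the RKHS unit ball in $L^\infty$ via the reproducing property and $c_k$; apply Pinsker; and compute the Bregman divergence of $\calL_k$ exactly by its quadratic structure) is a clean, self-contained derivation of both claims. Two small stylistic points worth flagging. First, for the ``consequently'' part, an alternative that is more in the spirit of how such results are usually transferred is to integrate the first (monotonicity) inequality along the segment $\xi_t = \xi + t(\overline{\xi}-\xi)$, using the Bregman representation $D_\calF(\overline{\xi},\xi)=\int_0^1 t^{-1}\langle\delta\calF(\xi_t)-\delta\calF(\xi),\rmd\xi_t-\rmd\xi\rangle\,\rmd t$ for both $\calF=\calL_k$ and $\calF=H$; your direct quadratic identity is simpler here precisely because $\calL_k$ is exactly quadratic. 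Second, you correctly observe that the constant $2c_k$ is not tight — in fact, combining Pinsker on both sides gives $\bigl(\int|\rmd\overline{\xi}-\rmd\xi|\bigr)^2 \le \KLdist(\overline{\xi}\|\xi)+\KLdist(\xi\|\overline{\xi})$, so $\mathrm{MMD}_k^2 \le c_k\cdot\langle\delta H(\overline{\xi})-\delta H(\xi),\rmd\overline{\xi}-\rmd\xi\rangle$ and $D_{\calL_k}(\overline{\xi},\xi)\le c_k\KLdist(\overline{\xi}\|\xi)$ both hold with constant $c_k$ rather than $2c_k$ — but matching the cited form as you do is fine and does not affect downstream rates beyond a factor of two.
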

We now state the formal guarantee for \ref{eqn:kernel-sga-update}; the proof is given in \cref{sec:prf:kernel-sga-update-rate}.

\begin{theorem}
\label{thm:kernel-sga-update-rate}
Let \(\phi^{0} \in L^{1}(\nu)\), and let \(\{\phi^{n}\}_{n \geq 1}\) be the sequence of potentials obtained as \(\phi^{n + 1} = \sfM^{k\emph{\textsf{-SGA}}}\left(\phi^{n}; \min\left\{\frac{1}{2c_{k}}, 1\right\}\right)\) for a bounded, positive-definite kernel \(k\).
Define the sequence of distributions \(\{\pi^{n}\}_{n \geq 1}\) where \(\pi^{n} = \pi(\phi^{n}, (\phi^{n})^{+})\).
Then, for any \(N \geq 1\)
\begin{equation}
    \calL_{k}(\pi_{\calY}^{N}; \nu) \leq \frac{\max\{2c_{k}, 1\}}{N} \cdot \KLdist(\pi^{\star} \| \pi^{0})\, .
\end{equation}
\end{theorem}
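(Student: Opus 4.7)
The plan is to leverage \cref{corr:phi_rewrite}, which recasts the \ref{eqn:kernel-sga-update} iterates on the potential $\phi^n$ as proximal-type iterates on the joint $\pi^n = \pi(\phi^n, (\phi^n)^+)$, namely $\pi^{n+1} = \rootKL(\pi^n; \calV_{\Phi_k}, \eta)$. Since $\calV_{\Phi_k}(\pi) = \frakm_k(\pi_{\calY}) - \frakm_k(\nu)$ depends only on $y$, it coincides with the first variation of $\pi \mapsto \calL_k(\pi_{\calY};\nu)$ viewed as a functional on $\calP(\calX \times \calY)$. This will allow us to lift \cref{prop:aubin-smooth} to the joint space: for any $\pi, \overline\pi$,
\begin{equation*}
    \calL_k(\overline\pi_{\calY};\nu) \leq \calL_k(\pi_{\calY};\nu) + \langle \calV_{\Phi_k}(\pi), \overline\pi - \pi\rangle + 2c_k \cdot \KLdist(\overline\pi_{\calY}\|\pi_{\calY}),
\end{equation*}
and by the data-processing inequality the final term is upper bounded by $2c_k\cdot\KLdist(\overline\pi\|\pi)$. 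Similarly, the convexity lower bound from \cref{prop:aubin-smooth} and the fact $\pi^\star_{\calY} = \nu$ yield $\langle \calV_{\Phi_k}(\pi^n), \pi^\star - \pi^n\rangle \leq -\calL_k(\pi^n_{\calY};\nu)$.

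The second ingredient is the standard three-point inequality for the KL Bregman proximal step defining $\rootKL$: for any $\overline\pi$ with $\overline\pi_{\calX} = \mu$,
\begin{equation*}
    \langle \calV_{\Phi_k}(\pi^n), \pi^{n+1} - \pi^n\rangle + \eta^{-1}\KLdist(\pi^{n+1}\|\pi^n) \leq \langle \calV_{\Phi_k}(\pi^n), \overline\pi - \pi^n\rangle + \eta^{-1}\bigl[\KLdist(\overline\pi\|\pi^n) - \KLdist(\overline\pi\|\pi^{n+1})\bigr],
\end{equation*}
which follows from first-order optimality and the Pythagorean identity for KL projections (both $\pi^{n+1}$ and $\overline\pi$ have $\calX$-marginal $\mu$, so admissibility is preserved). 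Instantiating at $\overline\pi = \pi^\star$, invoking the convexity bound above to control $\langle \calV_{\Phi_k}(\pi^n), \pi^\star - \pi^n\rangle$, and then substituting into the smoothness inequality applied between $\pi^n$ and $\pi^{n+1}$, I would obtain the descent-type relation
\begin{equation*}
    \calL_k(\pi^{n+1}_{\calY};\nu) \leq \eta^{-1}\bigl[\KLdist(\pi^\star\|\pi^n) - \KLdist(\pi^\star\|\pi^{n+1})\bigr] + (2c_k - \eta^{-1})\KLdist(\pi^{n+1}\|\pi^n).
\end{equation*}
The step-size prescription $\eta = \min\{1/(2c_k), 1\}$ makes the last term non-positive, and the same choice applied with $\overline\pi = \pi^n$ establishes the monotonicity $\calL_k(\pi^{n+1}_{\calY};\nu) \leq \calL_k(\pi^n_{\calY};\nu)$. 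Summing the resulting telescoping bound from $n = 0$ to $N - 1$ and using monotonicity to replace the average by $\calL_k(\pi^N_{\calY};\nu)$ gives the claimed rate with constant $\eta^{-1} = \max\{2c_k, 1\}$.

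The most delicate step is the three-point inequality on joint distributions with the marginal constraint $\overline\pi_{\calX} = \mu$: one must verify that the optimality conditions for $\rootKL$ genuinely produce a KL Pythagorean relation relative to every admissible competitor, which hinges on the fact that $\calV_{\Phi_k}(\pi^n)$ depends only on $y$ so the associated Lagrangian decouples cleanly over $\calX$. Everything else — lifting \cref{prop:aubin-smooth} via the $y$-only structure of $\calV_{\Phi_k}$, the data-processing step on KL, and the telescoping — is mechanical once this is in place.
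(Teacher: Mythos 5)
Your proposal is correct and follows essentially the same path as the paper's proof: lift to the joint space via \cref{corr:phi_rewrite}, apply the three-point identity for the KL Bregman proximal step with the convex constraint $\overline\pi_\calX = \mu$ (the paper invokes \citet[Lem.~3]{aubin2022mirror} for this), combine with the smoothness and convexity bounds from \cref{prop:aubin-smooth} lifted through the $y$-only structure of $\calV_{\Phi_k}$ and the KL data-processing inequality, establish monotonicity with $\overline\pi = \pi^n$, and telescope at $\overline\pi = \pi^\star$. The "delicate step" you flag — the three-point inequality on $\{\pi : \pi_\calX = \mu\}$ — is indeed the crux, but it holds by the standard Bregman projection argument onto a convex (here affine) set and needs no reliance on the $y$-only structure of $\calV_{\Phi_k}$; that structure is used only to reduce the inner product to $\calY$-marginals before invoking \cref{prop:aubin-smooth}.
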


Note that every \(\pi^{n}\) in the sequence of distributions generated by \ref{eqn:kernel-sga-update} stays in \(\calQ\).
When the kernel \(k\) is characteristic, \cref{thm:kernel-sga-update-rate} shows that \(\pi^{n}_{\calY}\) approaches \(\nu\) and consequently establishes a rate of convergence to the optimal coupling \(\pi^{\star}\).
Suppose \(\phi^{0} = \bm{0}\), and \(\pi^{0} = \pi(\phi^{0}, (\phi^{0})^{+})\).
Then from \citet[Proof of Cor. 1]{leger2021gradient}, we know that \(\KLdist(\pi^{\star} \| \pi^{0}) \leq \KLdist(\pi^{\star} \| \pi^{\textsf{ref}})\) where \(\pi^{\textsf{ref}}\) is the reference measure in \cref{eqn:static_SB}.
Consequently, after \(N\) iterations, the rate we obtain from \cref{thm:kernel-sga-update-rate} is
\begin{equation*}
    \frac{2c_{k}}{N} \cdot \frac{\KLdist(\pi^{\star} \| \pi^{\textsf{ref}})}{\varepsilon}~.
\end{equation*}
This highlights the better dependence on \(\varepsilon\) compared to the more classical analyses of the Sinkhorn algorithm where the dependence on \(\varepsilon\) is of the form \(e^{-\varepsilon^{-1}}\).
From this, we can directly infer that \ref{eqn:sga-update} results in a sequence of joint distributions whose \(\calY\)-marginals converge to \(\nu\) in the squared MMD defined by a bounded, positive definite kernel at a \(\frac{1}{N}\) rate \emph{without} additional assumptions on either the marginal \(\nu\) or the cost function \(c(\cdot, \cdot)\), since the bound is meaningful as soon as an optimal coupling exists $\KLdist(\pi^{\star} \| \pi^{\textsf{ref}})<\infty$.

\subsection{Additional remarks on \(\Phi\)\textsf{-match}}

\paragraph{\ref{eqn:phi-match-update} as a mirror method}

The interpretation of \ref{eqn:phi-match-update} presented here is a generalisation of the mirror method perspective of \(\eta\)-Sinkhorn introduced in \cite{karimi2024sinkhorn}.
Let \(\varphi : \calP(\calX \times \calY) \to \bbR\) be given by \(\varphi(\pi) = \KLdist(\pi \| \pi^{\textsf{ref}})\), where \(\pi^{\textsf{ref}}\) is the reference measure in the static Schr\"{o}dinger bridge problem (\cref{eqn:static_SB}).
Then,
\begin{equation*}
    \delta \varphi(\pi)(x, y) = \log \frac{\pi(x, y)}{\pi^{\textsf{ref}}(x, y)}~.
\end{equation*}
Following \citet{karimi2024sinkhorn}, consider the restriction of \(\varphi\) to \(\overline{\calQ} = \{\pi \in \calP(\calX \times \calY) : \pi_{\calX} = \mu\}\), and let \(\varphi^{\star}\) be the convex conjugate of this restricted \(\varphi\).
From \citet[Lem. 3]{karimi2024sinkhorn}, we have that for any suitably integrable \(h\),
\begin{equation}
\label{eqn:special_MD_map}
    \delta\varphi^{\star}(h)(x, y) = \mu(x) \cdot \frac{\pi^{\textsf{ref}}(x, y) \exp(h(x,y))}{\int_{\calY} \pi^{\textsf{ref}}(x, y') \exp(h(x, y')) \rmd y'}~.
\end{equation}
Note that \(\delta \varphi^{\star}(h)\) is a valid density function over \(\calX \times \calY\) since \(\int_{\calY} \delta \varphi^{\star}(h)(x, y)\rmd y = \mu(x)\).
Additionally, as expected of a mirror map, we have \(\delta \varphi^{\star}(\delta \varphi(\pi)) = \pi\) for any \(\pi \in \overline{\calQ}\).

\begin{lemma}
\label{lem:md_interpret}
Let \(\phi^{0} \in L^{1}(\nu)\) and let \(\{\phi^{n}\}_{n \geq 1}\) be the sequence of potentials obtained as \(\phi^{n + 1} = \sfM^{\Phi\emph{\textsf{-match}}}(\phi^{n}; \eta)\).
Then, the sequence of distributions \(\{\pi^{n}\}_{n \geq 0}\) where \(\pi^{n} = \pi(\phi^{n}, (\phi^{n})^{+})\) satisfy for every \(n \geq 0\)
\begin{equation}
\label{eqn:discrete_MD}
    \pi^{n + 1} = \delta \varphi^{\star}\left(\delta \varphi(\pi^{n}) - \eta \cdot \calV_{\Phi}(\pi^{n})\right)
\end{equation}
where \(\calV_{\Phi}\) is defined in \cref{eqn:proximal-point-V}.
\end{lemma}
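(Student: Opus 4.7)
\textbf{Proof plan for Lemma \ref{lem:md_interpret}.} The strategy is a direct computation: express $\delta\varphi(\pi^n)$ explicitly using the factorised form of iterates in $\calQ$, combine with the $\Phi$-match update, and apply the conjugate formula \eqref{eqn:special_MD_map} to recognise $\pi^{n+1}$. No external results beyond those already stated are required.

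First, I would observe that every iterate $\pi^n = \pi(\phi^n, (\phi^n)^+)$ lies in $\overline{\calQ}$ since $\pi^n_\calX = \mu$, so the mirror map formalism applies. Writing $\pi^{\textsf{ref}}(x,y) = Z_{\textsf{ref}}^{-1}\exp(-c(x,y)/\varepsilon)\mu(x)\nu(y)$ and using \eqref{eqn:primal_pi} gives
\begin{equation*}
    \delta\varphi(\pi^n)(x,y) \;=\; \log\frac{\pi^n(x,y)}{\pi^{\textsf{ref}}(x,y)} \;=\; \phi^n(y) - (\phi^n)^+(x) + \log Z_{\textsf{ref}},
\end{equation*}
which is valid up to an additive function of $x$ (the ambiguity allowed by the $\calX$-marginal constraint defining $\overline{\calQ}$). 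Crucially, $\calV_\Phi(\pi^n)(x,y) = \log\Phi(\pi^n_\calY)(y) - \log\Phi(\nu)(y)$ depends only on $y$, and the definition of $\ref{eqn:phi-match-update}$ states exactly that $\phi^{n+1}(y) = \phi^n(y) - \eta\,\calV_\Phi(\pi^n)(y)$.

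Combining these two displays yields
\begin{equation*}
    \bigl(\delta\varphi(\pi^n) - \eta\,\calV_\Phi(\pi^n)\bigr)(x,y) \;=\; \phi^{n+1}(y) - (\phi^n)^+(x) + \log Z_{\textsf{ref}}.
\end{equation*}
Plugging this $h(x,y)$ into the conjugate formula \eqref{eqn:special_MD_map}, the $x$-only terms $-(\phi^n)^+(x)+\log Z_{\textsf{ref}}$ factor out of both numerator and denominator (after pulling $e^{h(x,y)}$ apart) and cancel. What remains is
\begin{equation*}
    \delta\varphi^\star(h)(x,y) \;=\; \mu(x)\cdot\frac{\pi^{\textsf{ref}}(x,y)\,e^{\phi^{n+1}(y)}}{\int_\calY \pi^{\textsf{ref}}(x,y')\,e^{\phi^{n+1}(y')}\rmd y'}.
\end{equation*}
Substituting the explicit expression for $\pi^{\textsf{ref}}$ and using the definition of $(\phi^{n+1})^+$, the denominator becomes $Z_{\textsf{ref}}^{-1}\mu(x)\exp((\phi^{n+1})^+(x))$, so the ratio collapses to $\exp(\phi^{n+1}(y) - (\phi^{n+1})^+(x) - c(x,y)/\varepsilon)\mu(x)\nu(y) = \pi^{n+1}(x,y)$, which is \eqref{eqn:discrete_MD}.

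The only subtle step is bookkeeping the additive-in-$x$ ambiguity of $\delta\varphi$ on $\overline{\calQ}$: one must verify that the choice of representative does not affect $\delta\varphi^\star$, which is immediate because the conjugate formula \eqref{eqn:special_MD_map} is invariant under adding functions of $x$ to $h$ (they cancel between numerator and denominator). Apart from this, the argument is a symbolic simplification that chases the Schr\"odinger factorisation through the mirror map.
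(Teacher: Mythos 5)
Your proof is correct and follows essentially the same route as the paper: write $\delta\varphi(\pi^n)$ using the factorised form of $\pi^n\in\calQ$, observe that $\calV_\Phi$ depends only on $y$ so subtracting $\eta\calV_\Phi$ just replaces $\phi^n$ by $\phi^{n+1}$ in the $y$-slot, and then push through the conjugate formula \eqref{eqn:special_MD_map}, whose invariance to $x$-only additive terms makes the remaining pieces ($-(\phi^n)^+(x)$ and the $\log Z_{\textsf{ref}}$ constant) drop out and the expression collapse to $\pi(\phi^{n+1},(\phi^{n+1})^+)$. The only cosmetic difference is that the paper packages this using the $(f\oplus g)$ shorthand and a pre-computed identity $\delta\varphi^\star(f\oplus g)=\pi(g,g^+)$, whereas you carry out the cancellation of the $x$-only factors explicitly inside the integral.
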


\cref{lem:md_interpret} offers a mirror descent interpretation of \ref{eqn:phi-match-update} in the spirit of a mirror descent method in finite-dimensional optimisation \citep{nemirovskii1983problem}, where one uses mirror map to perform updates in the dual space.
We give the proof of \cref{lem:md_interpret} in \cref{sec:prf:md_interpret}.

\section{Extensions}
\label{sec:extensions}
Here, we build on the ideas in the preceding sections and present two extensions.
The first extension revisits the smoothness property of the semi-dual discussed previously and illustrates how this can be leveraged to design iterative methods for maximising the semi-dual with provable guarantees, which also allows us to design an accelerated version of one of these algorithms without having to place assumptions on \(\mu\) and \(\nu\).
The second extension adapts \ref{eqn:phi-match-update} for the dynamical Schr\"{o}dinger bridge problem which generalises the static Schr\"{o}dinger bridge problem to path measures.

\subsection{Adapting to smoothness of the semi-dual \(J\)}
\label{sec:extensions:steep-ascent}

\paragraph{Signed semi-dual gradient ascent}
As mentioned previously, the fundamental obstacle to establishing guarantees for \ref{eqn:sga-update} is the mismatch between the inner product in the Bregman divergence (which is in \(L^{2}(\calY)\)) and the squared growth (which is in \(L^{\infty}(\calY)\)).
This suggests a better-suited steepest ascent update involving \(L^{1}(\calY)\) (which corresponds to the dual norm of \(L^{\infty}(\calY)\)) that is defined by
\begin{equation*}
    \phi^{n + 1} = \sfM^{\textsf{sign-SGA}}(\phi^{n}; \eta) := \phi^{n} + \eta \cdot \|\delta J(\phi^{n})\|_{L^{1}(\calY)} \cdot \mathrm{sign}(\delta J(\phi^{n}))~.\tag{\textsf{sign-SGA}}\label{eqn:sign-sga-update}
\end{equation*}
Note that for any \(\phi \in L^{1}(\nu) \cap L^{\infty}(\calY)\), \(\sfM^{\textsf{sign-SGA}}(\phi; \eta) \in L^{1}(\nu) \cap L^{\infty}(\calY)\) due to the form of \(\delta J(\phi)\) in \cref{eqn:first-var-J}.
This is because for any \(\phi \in L^{1}(\nu)\), \(\|\delta J(\phi)\|_{L^{1}(\calY)} = \|\pi(\phi, \phi^{+})_{\calY} - \nu\|_{L^{1}(\calY)} \leq 2\), and the sign function being bounded pointwise.
Also, for a sufficiently small \(\eta > 0\), the growth property implied by \cref{lem:Linf-smooth} asserts that \(J(\sfM^{\textsf{sign-SGA}}(\phi; \eta)) \geq J(\phi)\) since one can show
\begin{equation*}
    J(\sfM^{\textsf{sign-SGA}}(\phi; \eta)) \geq J(\phi) + \eta \cdot \|\delta J(\phi)\|_{L^{1}(\calY)}^{2} - \frac{\eta^{2}}{2} \cdot \|\delta J(\phi)\|_{L^{1}(\calY)}^{2}~.
\end{equation*}
This ascent property in conjunction with the concavity of \(J\) enables us to give a non-asymptotic convergence rate for \ref{eqn:sign-sga-update} with an ``anchoring'' step.
Let \(y_{\texttt{anc}} \in \calY\) be an anchor point, and \(\phi^{0} \in L^{1}(\nu) \cap L^{\infty}(\calY)\) be such that \(\phi^{0}(y_{\texttt{anc}}) = 0\).
Define the set
\begin{equation*}
    \calT_{\phi^{0}, y_{\texttt{anc}}} := \{\phi \in L^{1}(\nu) \cap L^{\infty}(\calY) : \phi(y_{\texttt{anc}}) = 0, J(\phi) \geq J(\phi^{0})\}~.
\end{equation*}

\begin{theorem}
\label{thm:sign-sga-conv-rate}
Let \(\{\phi^{n}\}_{n \geq 1}\) be the sequence of potentials generated according to the following recursion for \(n \geq 0\):
\begin{align*}
\phi^{n + \nicefrac{1}{2}} &= \sfM^{\emph{\textsf{sign-SGA}}}(\phi^{n}; \eta) \\
\phi^{n + 1} &= \phi^{n + \nicefrac{1}{2}} - (\phi^{n + \nicefrac{1}{2}}(y_{\emph{\texttt{anc}}}) - \phi^{n}(y_{\emph{\texttt{anc}}})) \cdot \bm{1}
\end{align*}
with \(\phi^{0}, y_{\emph{\texttt{anc}}}\) as defined above.
Then, for all \(N \geq 1\), \(\phi^{N} \in \calT_{\phi^{0}, y_{\emph{\texttt{anc}}}}\) and for \(\widetilde{\phi}^{\star} = \argmax~\{J(\phi) : \phi \in  \calT_{\phi^{0}, y_{\emph{\texttt{anc}}}}\}\) we have
\begin{equation*}
    J(\phi^{N}) - J(\widetilde{\phi}^{\star}) \geq - \frac{2 \cdot \mathrm{diam}(\calT_{\phi^{0}, y_{\texttt{anc}}}; L^{\infty}(\calY))^{2}}{N + 1}~,\quad \text{where }~~ \mathrm{diam}( \calS; L^{\infty}(\calY)) := \sup_{\overline{\phi}, \phi \in \calS} \|\overline{\phi} - \phi\|_{L^{\infty}(\calY)}~.
\end{equation*}
\end{theorem}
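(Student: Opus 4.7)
The plan is to derive a per-iteration Polyak-type ascent inequality $\Delta_n - \Delta_{n+1} \geq \Delta_n^2/(2D^2)$ for the suboptimality $\Delta_n := J(\widetilde{\phi}^\star) - J(\phi^n)$, where $D := \mathrm{diam}(\calT_{\phi^0, y_{\texttt{anc}}}; L^\infty(\calY))$, and then convert it into the claimed $O(1/N)$ rate via the classical reciprocal-telescoping trick. The three ingredients are shift-invariance (\cref{rmk:J_shift_invariance}) to make the anchoring step free in function value, the $L^\infty$-smoothness bound (\cref{lem:Linf-smooth}) to provide an ascent guarantee, and concavity (\cref{lem:concavity-of-J}) paired with H\"older's inequality to lower-bound that ascent by $\Delta_n^2/(2D^2)$.

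I would first verify by induction that the iterates never leave $\calT_{\phi^0, y_{\texttt{anc}}}$: the anchor subtraction gives $\phi^{n+1}(y_{\texttt{anc}}) = \phi^n(y_{\texttt{anc}}) = 0$, and by \cref{rmk:J_shift_invariance} one has $J(\phi^{n+1}) = J(\phi^{n+\nicefrac{1}{2}})$, so only the effect of $\sfM^{\textsf{sign-SGA}}$ on $J$ matters. Plugging the displacement $\phi^{n+\nicefrac{1}{2}} - \phi^n = \eta \|\delta J(\phi^n)\|_{L^1(\calY)} \cdot \mathrm{sign}(\delta J(\phi^n))$ into \cref{lem:Linf-smooth}, and using $\|\mathrm{sign}(\cdot)\|_{L^\infty(\calY)} \leq 1$ together with $\langle g, \mathrm{sign}(g)\rangle = \|g\|_{L^1(\calY)}$, yields
\[ J(\phi^{n+1}) - J(\phi^n) \;\geq\; \tfrac{\eta(2-\eta)}{2} \, \|\delta J(\phi^n)\|_{L^1(\calY)}^2, \]
which is nonnegative for $\eta \in (0,2)$, so $J(\phi^n)$ is nondecreasing and hence $\phi^n \in \calT_{\phi^0, y_{\texttt{anc}}}$. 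Picking the maximising $\eta = 1$, combined with concavity and the H\"older bound
\[ \Delta_n \;\leq\; \langle \delta J(\phi^n), \widetilde{\phi}^\star - \phi^n\rangle \;\leq\; \|\delta J(\phi^n)\|_{L^1(\calY)} \cdot \|\widetilde{\phi}^\star - \phi^n\|_{L^\infty(\calY)} \;\leq\; D \cdot \|\delta J(\phi^n)\|_{L^1(\calY)}, \]
produces the target recursion $\Delta_n - \Delta_{n+1} \geq \Delta_n^2/(2D^2)$. Dividing through by $\Delta_n \Delta_{n+1} > 0$ and using monotonicity $\Delta_{n+1} \leq \Delta_n$ gives $1/\Delta_{n+1} - 1/\Delta_n \geq 1/(2D^2)$. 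Finally, the very recursion $\Delta_{n+1} \leq \Delta_n(1 - \Delta_n/(2D^2))$, combined with $\Delta_{n+1} \geq 0$, self-consistently forces $\Delta_0 \leq 2D^2$ (otherwise the right-hand side would be negative), so $1/\Delta_0 \geq 1/(2D^2)$; summing the telescoping bound from $n=0$ to $N-1$ yields $1/\Delta_N \geq (N+1)/(2D^2)$, which rearranges to the claimed estimate.

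The main obstacle is the nonstandard $L^\infty$-$L^1$ duality at play: the $L^\infty$-squared penalty in \cref{lem:Linf-smooth} demands a step direction whose $L^\infty$-norm is controlled, which is exactly what $\mathrm{sign}(\delta J)$ accomplishes, and the matching H\"older pairing $L^1 \cdot L^\infty$ forces the feasible-set diameter to be measured in $L^\infty(\calY)$ rather than in a self-dual $L^2$ sense one would naively hope for from \ref{eqn:sga-update}. A secondary technicality is gauge-fixing: without the anchoring, shift-invariance would make $\mathrm{diam}(\cdot; L^\infty(\calY))$ infinite along the constant direction, so the anchor step is precisely the minimal device that pins down the additive constant while being free in $J$. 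Tightening the denominator from $N$ to $N+1$ then hinges on extracting $\Delta_0 \leq 2D^2$ from the recursion itself, which is sharper than the a priori estimate $\Delta_0 \leq 2D$ obtained from $\|\delta J\|_{L^1(\calY)} \leq 2$.
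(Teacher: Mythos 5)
Your proposal is correct, and it reaches the stated bound via a genuinely different telescoping device than the paper uses. Both proofs share the same two ingredients: the per-step ascent
\[
J(\phi^{n+1}) - J(\phi^{n}) \;\geq\; \tfrac{1}{2}\,\|\delta J(\phi^{n})\|_{L^{1}(\calY)}^{2}
\]
(your $\eta(2-\eta)/2$ coefficient at $\eta = 1$ matches the paper's, which also specialises to $\eta = 1$ in its proof), and the concavity-plus-H\"older link $\Delta_n := J(\widetilde{\phi}^{\star}) - J(\phi^{n}) \leq D\,\|\delta J(\phi^{n})\|_{L^{1}(\calY)}$. Where the two diverge is the conversion of these per-step facts into the $O(1/N)$ rate. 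The paper introduces the Lyapunov sequence $E_n = \tfrac{n(n+1)}{2}\,(J(\phi^{n}) - J(\widetilde{\phi}^{\star}))$, applies the H\"older--Young inequality to absorb the cross term $\langle \delta J(\phi^n), \phi^n - \widetilde{\phi}^\star\rangle$ against the gradient-norm-squared term with a carefully chosen $n$-dependent split, and shows $E_{n+1} - E_n \geq -D^2$, which telescopes immediately. You instead combine the two ingredients directly into a Polyak-type gradient-dominance recursion $\Delta_n - \Delta_{n+1} \geq \Delta_n^2/(2D^2)$ and perform the classical reciprocal telescoping $1/\Delta_{n+1} - 1/\Delta_n \geq 1/(2D^2)$. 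This route is cleaner in that it uses plain H\"older rather than Young, but it requires two things the paper's argument sidesteps: division by $\Delta_n\Delta_{n+1}$ needs $\Delta_{n+1} > 0$ (trivially handled if some $\Delta_m = 0$, but worth stating), and matching the paper's $N+1$ denominator requires the self-consistency observation that $\Delta_0 \leq 2D^2$, since otherwise the recursion would force $\Delta_1 < 0$. That last step is a nice sharpening and is correct, though I would note that it is not always strictly sharper than the a priori estimate $\Delta_0 \leq 2D$ (it is sharper only when $D < 1$); what matters is that it delivers exactly the $1/(2D^2)$ needed. In short: both arguments are valid and give identical constants; the paper's is the energy-function telescoping standard in modern first-order-method analysis, and yours is the equally standard reciprocal-suboptimality telescoping, with the small extra burden of establishing the initial-suboptimality bound from the recursion itself.
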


The first step applies \ref{eqn:sign-sga-update}, and the second step recenters the iterate to satisfy \(\phi^{n + 1}(y_{\texttt{anc}}) = \phi^{n}(y_{\texttt{anc}})\).
The recentering does not affect the value of the semi-dual as noted previously in \cref{rmk:J_shift_invariance}, and if \(\phi^{\star} \in L^{\infty}(\calY)\) is a Schr\"{o}dinger potential, then \(\phi^{\star} - \phi^{\star}(y_{\texttt{anc}}) \cdot \bm{1}\) is also a maximiser of \(J\), which lies in \( \calT_{\phi^{0}, y_{\texttt{anc}}}\).
This shift-invariance also explains the use of the anchoring step: without anchoring, the superlevel set of \(J\) is unbounded.
We can hence infer that the sequence \(\{J(\phi^{n})\}_{n\geq 1}\) converges to the maximum of the semi-dual \(J\). The proof of \cref{thm:sign-sga-conv-rate} can be found in \cref{app:sec:prf:sign-sga-conv-rate}.

\paragraph{Projected semi-dual gradient ascent}
When the cost function is bounded in a certain manner, it is possible to show that the semi-dual satisfies a different notion of smoothness, but non-uniformly depending on the ``size'' of the domain considered. 
\begin{lemma}
\label{lem:L2nu-smooth}
Let \(\phi, \overline{\phi} \in L^{2}(\nu)\) be such that \(\|\phi\|_{L^{\infty}(\calY)}, \|\overline{\phi}\|_{L^{\infty}(\calY)} \leq B\) for a given \(B > 0\).
Assume that the cost \(c(\cdot, \cdot) \geq 0\).
Then,
\begin{equation*}
    J(\overline{\phi}) - J(\phi) - \langle \delta J(\phi), \overline{\phi} - \phi\rangle \geq -\frac{\lambda(B) \cdot \|\overline{\phi} - \phi\|_{L^{2}(\nu)}^{2}}{2}~; ~~\lambda(B) = e^{2B} \cdot \bbE_{(x, y') \sim \mu \otimes \nu}\left[\exp\left(\frac{c(x, y')}{\varepsilon}\right)\right]~.
\end{equation*}
\end{lemma}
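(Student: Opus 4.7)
My plan is to reduce the statement to a second-order Taylor expansion of the nonlinear part of $J$. Write $J(\phi) = \int_{\calY}\phi\,\rmd\nu - F(\phi)$ with $F(\phi) \defeq \int_{\calX}\phi^{+}(x)\,\rmd\mu(x)$. Since the linear term contributes zero Bregman divergence, it suffices to prove $F(\overline{\phi}) - F(\phi) - \langle \delta F(\phi), \overline{\phi}-\phi\rangle \leq \tfrac{1}{2}\lambda(B)\,\|\overline{\phi}-\phi\|_{L^{2}(\nu)}^{2}$. Setting $h = \overline{\phi} - \phi$ and $\phi_{t} = \phi + t h$, the triangle inequality gives $\|\phi_{t}\|_{L^{\infty}(\calY)}\leq B$ for every $t\in[0,1]$, and the integral form of Taylor's remainder yields
\[ F(\overline{\phi}) - F(\phi) - \langle \delta F(\phi), h\rangle = \int_{0}^{1}(1-t)\,\delta^{2}F(\phi_{t})[h,h]\,\rmd t, \]
so it is enough to prove $\delta^{2}F(\phi_{t})[h,h] \leq \lambda(B)\|h\|_{L^{2}(\nu)}^{2}$ uniformly in $t\in[0,1]$.

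Next I would exploit the log-sum-exp structure. For each fixed $x$, $\phi \mapsto \phi^{+}(x)$ is a cumulant generating function, and a direct computation of $\tfrac{d^{2}}{d s^{2}}\big|_{s=0}(\phi_{t}+s h)^{+}(x)$ gives $\mathrm{Var}_{q_{x}}[h]$, where $q_{x}$ has density $q_{x}(y) = \exp(\phi_{t}(y) - \phi_{t}^{+}(x) - c(x,y)/\varepsilon)\nu(y)$ on $\calY$ (i.e., the conditional law of $y$ given $x$ under $\pi(\phi_{t},\phi_{t}^{+})$). Integrating in $x$, upper-bounding variance by the second moment, and swapping the order of integration,
\[ \delta^{2}F(\phi_{t})[h,h] = \int_{\calX}\mathrm{Var}_{q_{x}}[h]\,\rmd\mu(x) \leq \int_{\calY} h(y)^{2}\,\pi(\phi_{t},\phi_{t}^{+})_{\calY}(y)\,\rmd y, \]
so it suffices to produce a uniform upper bound on the density ratio $\pi(\phi_{t},\phi_{t}^{+})_{\calY}/\nu$ by $\lambda(B)$.

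The crux of the argument lies in producing this uniform bound, and uses both hypotheses. Dropping $e^{-c(x,y)/\varepsilon}\leq 1$ and $e^{\phi_{t}(y)}\leq e^{B}$ pointwise,
\[ \frac{\pi(\phi_{t},\phi_{t}^{+})_{\calY}(y)}{\nu(y)} = \int_{\calX} e^{\phi_{t}(y) - \phi_{t}^{+}(x) - c(x,y)/\varepsilon}\,\rmd\mu(x) \leq e^{B}\int_{\calX}e^{-\phi_{t}^{+}(x)}\,\rmd\mu(x). \]
Then $\phi_{t}(y')\geq -B$ inside the definition $\phi_{t}^{+}(x) = \log\int e^{\phi_{t}(y') - c(x,y')/\varepsilon}\,\rmd\nu(y')$ gives $e^{-\phi_{t}^{+}(x)} \leq e^{B}/\!\int_{\calY}e^{-c(x,y')/\varepsilon}\rmd\nu(y')$. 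The main obstacle is to replace this reciprocal moment by the more interpretable $\int e^{c/\varepsilon}\rmd\nu$ appearing in $\lambda(B)$, which I would accomplish via Cauchy--Schwarz: $1 = (\int e^{c/(2\varepsilon)}e^{-c/(2\varepsilon)}\rmd\nu)^{2} \leq \int e^{c/\varepsilon}\rmd\nu \cdot \int e^{-c/\varepsilon}\rmd\nu$, so $1/\!\int e^{-c(x,y')/\varepsilon}\rmd\nu(y') \leq \int e^{c(x,y')/\varepsilon}\rmd\nu(y')$. Chaining these estimates and applying Fubini gives $\pi(\phi_{t},\phi_{t}^{+})_{\calY}/\nu \leq e^{2B}\bbE_{\mu\otimes\nu}[e^{c/\varepsilon}] = \lambda(B)$; substituting into the Taylor remainder with $\int_{0}^{1}(1-t)\rmd t = \nicefrac{1}{2}$ and flipping sign yields the claim.
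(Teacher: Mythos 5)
Your proof is correct and essentially mirrors the paper's argument: both reduce the Bregman defect of \(J\) to an integrated conditional variance, bound it by a second moment, apply Fubini, and establish the density-ratio bound \(\pi(\phi_t,\phi_t^+)_\calY/\nu\leq\lambda(B)\). The only variations are cosmetic: you use the standard \((1-t)\)-weighted Taylor remainder where the paper derives an equivalent constant-weight identity in \cref{lem:broader-lem}, and you reach the reciprocal-moment estimate via pointwise bounds on \(\phi_t\) plus Cauchy--Schwarz applied to \(\int e^{-c/\varepsilon}\,\rmd\nu\), whereas the paper applies Jensen's inequality directly to \(\bbE_{y'\sim\nu}[e^{\phi_t(y')-c(x,y')/\varepsilon}]^{-1}\); both routes yield the same constant \(e^{2B}\,\bbE_{\mu\otimes\nu}[e^{c/\varepsilon}]\).
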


While \cref{lem:Linf-smooth} is a general statement, regularity condition \cref{lem:L2nu-smooth} is parameterised by a ``size'' parameter \(B\), and hence it is useful to understand what a reasonable choice of \(B\) is for the purposes of solving the eOT problem.
If \(B\) is too small, then it is likely that the Schr\"{o}dinger potential \(\phi^{\star}\) would not satisfy \(\|\phi^{\star}\|_{L^{\infty}(\calY)} \leq B\).
Interestingly however, the Schr\"{o}dinger potentials \(\phi^{\star}\) and \(\psi^{\star} = (\phi^{\star})^{+}\) inherit properties from the cost function \(c(\cdot, \cdot)\), which allow us to determine a reasonable choice of \(B\) based on the cost function.
This is formalised in the following proposition.
\begin{proposition}[{\cite[Lem. 2.7]{marino2020optimal}}]
\label{prop:linf-bound-poten}
Consider the dual eOT problem defined in \cref{eqn:eOT-prob-dual}.
There exists Schr\"{o}dinger potentials \(\phi^{\star}, \psi^{\star}\) such that
\begin{equation*}
    \|\phi^{\star}\|_{L^{\infty}(\calY)} \leq \frac{3\|c\|_{L^{\infty}(\calX \times \calY)}}{2}; \quad \|\psi^{\star}\|_{L^{\infty}(\calX)} \leq \frac{3\|c\|_{L^{\infty}(\calX \times \calY)}}{2}~.
\end{equation*}
\end{proposition}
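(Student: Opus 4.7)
The plan is to exhibit Schrödinger potentials meeting the stated bounds by starting from any valid pair and recentring via the shift-invariance $(\phi, \psi) \mapsto (\phi + a, \psi + a)$ of the fixed-point equations $\psi = \phi^{+}$, $\phi = \psi^{-}$. Let $(\tilde{\phi}, \tilde{\psi})$ denote any such base pair (which exists by the standing regularity assumptions ensuring solvability) and write $M := \|c\|_{L^{\infty}(\calX \times \calY)}$.

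First, I would derive an oscillation bound for $\tilde{\phi}$. Using $\tilde{\phi} = \tilde{\psi}^{-}$ and writing the difference of two logs as a single log-ratio gives
\[
\tilde{\phi}(y) - \tilde{\phi}(y') = \log \frac{\int_{\calX} \exp\!\bigl(\tilde{\psi}(x) + c(x, y')/\varepsilon\bigr)\rmd\mu(x)}{\int_{\calX} \exp\!\bigl(\tilde{\psi}(x) + c(x, y)/\varepsilon\bigr)\rmd\mu(x)}~.
\]
Since $c \in [0, M]$, the pointwise ratio of integrands lies in $[e^{-M/\varepsilon}, e^{M/\varepsilon}]$, so $|\tilde{\phi}(y) - \tilde{\phi}(y')| \leq M/\varepsilon$ and hence $\mathrm{osc}(\tilde{\phi}) \leq M/\varepsilon$. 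Next, centre via the shift $a := -\tfrac{1}{2}(\sup \tilde{\phi} + \inf \tilde{\phi})$ and set $\phi^{\star} := \tilde{\phi} + a$, $\psi^{\star} := \tilde{\psi} + a$. By construction $\|\phi^{\star}\|_{L^{\infty}(\calY)} \leq \tfrac{1}{2}\mathrm{osc}(\tilde{\phi}) \leq M/(2\varepsilon)$, already tighter than $3M/(2\varepsilon)$.

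Finally, I would bound $\psi^{\star}$ by substituting into $\psi^{\star} = (\phi^{\star})^{+}$. With $|\phi^{\star}(y)| \leq M/(2\varepsilon)$ and $c(x,y)/\varepsilon \in [0, M/\varepsilon]$, the integrand satisfies $\phi^{\star}(y) - c(x,y)/\varepsilon \in [-3M/(2\varepsilon),\, M/(2\varepsilon)]$; integrating the exponential against the probability measure $\nu$ and taking the logarithm preserves both endpoints, yielding $\|\psi^{\star}\|_{L^{\infty}(\calX)} \leq 3M/(2\varepsilon)$. The statement's $3M/2$ is $3M/(2\varepsilon)$ under the paper's convention of rescaling the cost by $\varepsilon$ at the outset.

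The proof is essentially bookkeeping and presents no serious technical obstacle. The one genuinely subtle point is the asymmetry created by a single-constant shift: one can centre at most one of the two coupled potentials, so $\phi^{\star}$ inherits the tight $1/2$ factor while $\psi^{\star}$ picks up the $3/2$ factor from the one-sided use of the bounds $c \geq 0$ and $c \leq M$. This asymmetry is precisely what produces the constant $3/2$ in the statement.
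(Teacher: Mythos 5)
Your argument is correct and is, as far as the cited source goes, the standard proof: the paper simply cites \citet[Lem. 2.7]{marino2020optimal} without reproducing the argument, and that lemma is proved exactly by the oscillation bound on one potential, recentring by a single additive constant, and then pushing the resulting two-sided bound through the fixed-point map to bound the other potential. Your identification of where the asymmetry between $\phi^{\star}$ (factor $1/2$) and $\psi^{\star}$ (factor $3/2$) comes from is precisely the right observation, and your two flagged caveats are both real: the bound as the paper prints it should carry a $1/\varepsilon$ (i.e.\ it is really $\frac{3\|c\|_\infty}{2\varepsilon}$, or equivalently $\frac{3}{2}\|c/\varepsilon\|_\infty$ under the paper's $\opttrans_1(\mu,\nu;c/\varepsilon)$ normalisation), and the constant $3/2$ requires $c\geq 0$ --- with a signed bounded cost the same argument only gives $2\|c\|_\infty/\varepsilon$, so the nonnegativity is being used implicitly.
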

The intriguing aspect of this proposition is the lack of a dependence on the regularisation parameter \(\varepsilon > 0\).
This proposition also suggests that solving the semi-dual problem for eOT over the space of functions \(\phi \in L^{2}(\nu)\) such that \(\|\phi\|_{L^{\infty}(\calY)} \leq \frac{3\|c\|_{L^{\infty}(\calX \times \calY)}}{2}\) is sufficient to recover a Schr\"{o}dinger potential.

Let \(\calS_{B} := \{\phi \in \calL^{2}(\nu) : \|\phi\|_{L^{\infty}(\calY)} \leq B\} \subset L^{1}(\nu)\).
We define the following operation
\begin{equation*}
    \phi^{n + 1} = \sfM^{\textsf{proj-SGA}}_{\calS_{B}}(\phi; \eta) := \argmin_{\overline{\phi} \in \calS_{B}}~ \left\|\overline{\phi} - \left(\phi + \eta \cdot \frac{\delta J(\phi)}{\nu}\right)\right\|_{L^{2}(\nu)}^{2}~.\tag{\textsf{proj-SGA}}\label{eqn:proj-sga-update}
\end{equation*}
While this operation may appear fortuitous, this is actually a natural recommendation based on \cref{lem:L2nu-smooth}.
This is because it can be obtained as the solution to a truncated local quadratic approximation of the semi-dual given below (truncated due to the restriction to \(\calS_{B}\)), which is inspired by ISTA \citep{beck2009fast}:
\begin{equation*}
    \sfM^{\mathsf{proj-SGA}}_{\calS_{B}}(\phi; \eta) = \argmax_{\overline{\phi} \in \calS_{B}}~J(\phi) + \langle \delta J(\phi), \overline{\phi} - \phi\rangle - \frac{1}{2\eta} \cdot \|\overline{\phi} - \phi\|_{L^{2}(\nu)}^{2}~.
\end{equation*}
From \cref{lem:L2nu-smooth}, when the cost \(c(\cdot, \cdot)\) is non-negative and the step size satisfies \(\eta \leq \lambda(B)^{-1}\), \(J(\sfM^{\textsf{proj-SGA}}_{\calS_{B}}(\phi; \eta)) \geq J(\phi)\) for any \(\phi \in \calS_{B}\) as
\begin{align*}
    J(\sfM^{\textsf{proj-SGA}}_{\calS_{B}}(\phi; \eta)) &\geq J(\phi) + \left\langle \delta J(\phi), \sfM^{\textsf{proj-SGA}}_{\calS_{B}}(\phi; \eta) - \phi\right\rangle - \frac{\lambda(B)}{2}\|\sfM^{\textsf{proj-SGA}}_{\calS_{B}}(\phi; \eta) - \phi\|_{L^{2}(\nu)}^{2} \\
    &\geq J(\phi) + \left\langle \delta J(\phi), \sfM^{\textsf{proj-SGA}}_{\calS_{B}}(\phi; \eta) - \phi\right\rangle - \frac{1}{2\eta}\|\sfM^{\textsf{proj-SGA}}_{\calS_{B}}(\phi; \eta) - \phi\|_{L^{2}(\nu)}^{2} \\
    &\geq J(\phi)~.
\end{align*}
The final step uses the optimality of \(\sfM^{\textsf{proj-SGA}}_{\calS_{B}}(\phi; \eta)\).
Analogous to \ref{eqn:sign-sga-update}, the concavity of \(J\) results in the following non-asymptotic convergence guarantee for \ref{eqn:proj-sga-update}.

\begin{theorem}
\label{thm:proj-sga-conv-rate}
Suppose \(c(\cdot, \cdot)\) is a non-negative cost function such that \(\lambda(B) < \infty\).
Let \(\{\phi^{n}\}_{n \geq 1}\) be the sequence of potentials generated according to the following recursion for \(n \geq 0\):
\begin{equation*}
    \phi^{n + 1} = \sfM^{\emph{\textsf{proj-SGA}}}_{\calS_{B}}(\phi^{n}; \lambda(B)^{-1})~,
\end{equation*}
where \(\lambda(B)\) is defined in \cref{lem:L2nu-smooth}.
Then, for all \(N \geq 1\), \(\phi^{N} \in \calS_{B}\) and for \(\widetilde{\phi}^{\star} = \argmax\limits_{\phi \in \calS_{B}} J(\phi)\)
\begin{equation*}
    J(\phi^{N}) - J(\widetilde{\phi}^{\star}) \geq -\frac{\lambda(B) \cdot \|\phi^{0} - \widetilde{\phi}^{\star}\|_{L^{2}(\nu)}^{2}}{2N}~\, .
\end{equation*}
\end{theorem}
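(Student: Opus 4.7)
}
The plan is to mirror the textbook convergence argument for projected gradient descent on a smooth convex function, adapted to the $L^{2}(\nu)$ geometry in which \ref{eqn:proj-sga-update} is defined. The main ingredients are already in place: \cref{lem:concavity-of-J} (concavity of $J$), \cref{lem:L2nu-smooth} (the one-sided $L^{2}(\nu)$-smoothness with constant $\lambda(B)$), and the first-order optimality of the projection defining $\sfM^{\textsf{proj-SGA}}_{\calS_{B}}$. Throughout, I will use the key identity $\langle \delta J(\phi)/\nu,\, h\rangle_{L^{2}(\nu)} = \langle \delta J(\phi), h\rangle$, which translates the intrinsic pairing used by the first variation into the $L^{2}(\nu)$ inner product that governs the projection step.

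First I would establish a one-step ascent inequality. Applying \cref{lem:L2nu-smooth} with $\overline{\phi} = \phi^{n+1}$ and $\phi = \phi^{n}$ (both in $\calS_{B}$) gives
\begin{equation*}
J(\phi^{n+1}) \geq J(\phi^{n}) + \langle \delta J(\phi^{n}),\, \phi^{n+1} - \phi^{n}\rangle - \tfrac{\lambda(B)}{2}\|\phi^{n+1} - \phi^{n}\|_{L^{2}(\nu)}^{2}.
\end{equation*}
Subtracting the concavity bound $J(\widetilde{\phi}^{\star}) \leq J(\phi^{n}) + \langle \delta J(\phi^{n}), \widetilde{\phi}^{\star} - \phi^{n}\rangle$ from \cref{lem:concavity-of-J} yields
\begin{equation*}
J(\phi^{n+1}) - J(\widetilde{\phi}^{\star}) \geq \langle \delta J(\phi^{n}),\, \phi^{n+1} - \widetilde{\phi}^{\star}\rangle - \tfrac{\lambda(B)}{2}\|\phi^{n+1} - \phi^{n}\|_{L^{2}(\nu)}^{2}.
\end{equation*}

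Next I would invoke the first-order optimality of the projection defining $\phi^{n+1}$: for every $\phi \in \calS_{B}$,
\begin{equation*}
\langle \phi^{n+1} - \phi^{n} - \eta \cdot \delta J(\phi^{n})/\nu,\, \phi - \phi^{n+1}\rangle_{L^{2}(\nu)} \geq 0,
\end{equation*}
which, via the inner-product identity noted above and taking $\phi = \widetilde{\phi}^{\star}$, rearranges to
\begin{equation*}
\langle \delta J(\phi^{n}),\, \phi^{n+1} - \widetilde{\phi}^{\star}\rangle \geq \tfrac{1}{\eta}\langle \phi^{n+1} - \phi^{n},\, \phi^{n+1} - \widetilde{\phi}^{\star}\rangle_{L^{2}(\nu)}.
\end{equation*}
Substituting this back, using $\eta = \lambda(B)^{-1}$, and applying the three-point identity $2\langle a-b,\, a-c\rangle = \|a-b\|^{2} + \|a-c\|^{2} - \|b-c\|^{2}$ collapses the bound into the standard ``one-step distance decrease'' inequality
\begin{equation*}
J(\widetilde{\phi}^{\star}) - J(\phi^{n+1}) \leq \tfrac{\lambda(B)}{2}\bigl(\|\phi^{n} - \widetilde{\phi}^{\star}\|_{L^{2}(\nu)}^{2} - \|\phi^{n+1} - \widetilde{\phi}^{\star}\|_{L^{2}(\nu)}^{2}\bigr).
\end{equation*}

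Finally, I would sum this telescoping inequality from $n = 0$ to $N-1$. The right-hand side collapses to $\tfrac{\lambda(B)}{2}\|\phi^{0} - \widetilde{\phi}^{\star}\|_{L^{2}(\nu)}^{2}$ after dropping the non-positive terminal term. On the left, the monotonicity $J(\phi^{n+1}) \geq J(\phi^{n})$ already established in the text (from smoothness plus optimality of the projection) implies $J(\widetilde{\phi}^{\star}) - J(\phi^{N}) \leq J(\widetilde{\phi}^{\star}) - J(\phi^{n+1})$ for every $n \leq N-1$, so the sum is bounded below by $N \cdot (J(\widetilde{\phi}^{\star}) - J(\phi^{N}))$. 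Dividing by $N$ gives the claimed $O(1/N)$ rate, and $\phi^{N} \in \calS_{B}$ is immediate by definition of the projection. The only subtle step is the pairing-vs-inner-product bookkeeping in going between $\langle \delta J(\phi), \cdot\rangle$ and $\langle \cdot,\cdot\rangle_{L^{2}(\nu)}$, and the implicit assumption that $\delta J(\phi^{n})/\nu \in L^{2}(\nu)$ so that the projection is well-defined; this holds because $\delta J(\phi^{n}) = \nu - \pi(\phi^{n},(\phi^{n})^{+})_{\calY}$ is uniformly bounded by $\nu$ in absolute value on the support of $\nu$.
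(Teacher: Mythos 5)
Your proposal is correct and follows essentially the same route as the paper's proof: a one-step smoothness bound, a concavity bound at $\widetilde{\phi}^\star$, the projection's first-order optimality condition, the three-point identity to telescope, and then monotonicity to collapse the sum. The paper merely packages these steps into the auxiliary \cref{lem:lower-bound-update} and \cref{lem:lem2.3-bt09-general} (following Beck--Teboulle's Lemma~2.3) and phrases projection optimality via a subgradient of $\bbI_{\calS_{B}}$ rather than the variational inequality you write, but these are equivalent and the substance of the argument is identical.
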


\paragraph{Accelerated \textsf{proj-SGA}}
Note that the growth condition in \cref{lem:L2nu-smooth} that \ref{eqn:proj-sga-update} is conceptually based on is given in terms of \(L^{2}(\nu)\) which is a Hilbert space.
This encourages adopting the structure of FISTA \citep{beck2009fast} to design an accelerated version of \ref{eqn:proj-sga-update}, which would lead to a rate of convergence that scales as \(\nicefrac{1}{N^{2}}\).
This method is based on minor adjustments to handle the non-uniform growth condition in \cref{lem:L2nu-smooth}, and is presented below.
For initial values \(\phi^{1} = \overline{\phi}^{0} \in \calS_{B}\), and \(t_{1} = 1\):
\begin{align*}
    &\overline{\phi}^{n} = \sfM^{\textsf{proj-SGA}}_{\calS_{B}}(\phi^{n}; \lambda(3B)^{-1})~; ~~ t_{n + 1} = \frac{1 + \sqrt{1 + 4t_{n}^{2}}}{2}~; \\
    &\phi^{n + 1} = \overline{\phi}^{n} + \left(\frac{t_{n} - 1}{t_{n + 1}}\right) \cdot (\overline{\phi}^{n} - \overline{\phi}^{n - 1})~.
    \tag{\textsf{proj-SGA++}}\label{eqn:acc-proj-sga-update}
\end{align*}

\begin{theorem}
\label{thm:acc-proj-sga-conv-rate}
Suppose \(c(\cdot, \cdot)\) is a non-negative cost function such that \(\lambda(B) < \infty\).
Consider the sequences \(\{\phi^{n}\}_{n \geq 2}, \{\overline{\phi}^{n}\}_{n \geq 1}\) generated according to \ref{eqn:acc-proj-sga-update}.
Then, for any \(N \geq 1\),
\begin{equation*}
    J(\overline{\phi}^{N}) - J(\widetilde{\phi}^{\star}) \geq -\frac{2 \cdot \lambda(3B) \cdot \|\overline{\phi}^{0} - \tilde{\phi}^{\star}\|_{L^{2}(\nu)}^{2}}{(N + 1)^{2}}; \quad \widetilde{\phi}^{\star} \in \argmax_{\phi \in \calS_{B}}\, J(\phi)~.
\end{equation*}
\end{theorem}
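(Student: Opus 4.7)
The plan is to adapt the classical FISTA analysis of \citet{beck2009fast} to the infinite-dimensional setting with the non-uniform smoothness of \cref{lem:L2nu-smooth}. Throughout write $L := \lambda(3B)$.

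\textbf{Step 1 (iterates remain in $\calS_{3B}$).} The recursion $t_{n+1} = (1 + \sqrt{1 + 4 t_n^2})/2$ yields $t_{n+1} \geq t_n \geq 1$ for $n \geq 1$, so the momentum coefficient $\alpha_n := (t_n - 1)/t_{n+1}$ lies in $[0, 1)$. Since $\bar\phi^n, \bar\phi^{n-1} \in \calS_B$ by construction, writing $\phi^{n+1} = (1 + \alpha_n)\bar\phi^n - \alpha_n \bar\phi^{n-1}$ and using the triangle inequality in $L^\infty(\calY)$ gives $\|\phi^{n+1}\|_{L^\infty(\calY)} \leq (1 + 2\alpha_n)B \leq 3B$. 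Hence \cref{lem:L2nu-smooth} can be invoked with constant $L = \lambda(3B)$ for every pair $(\phi^n, \bar\phi^n)$, both lying in $\calS_{3B}$.

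\textbf{Step 2 (three-point inequality).} The crucial lemma I would prove is: for any $\phi \in \calS_{3B}$, any $\tilde\phi \in \calS_B$, and $\bar\phi := \sfM^{\textsf{proj-SGA}}_{\calS_B}(\phi; L^{-1})$,
\begin{equation*}
\frac{2}{L}\bigl(J(\tilde\phi) - J(\bar\phi)\bigr) \leq \|\tilde\phi - \phi\|_{L^2(\nu)}^2 - \|\tilde\phi - \bar\phi\|_{L^2(\nu)}^2~.
\end{equation*}
This is obtained by combining three ingredients: concavity of $J$ (\cref{lem:concavity-of-J}), the smoothness lower bound (\cref{lem:L2nu-smooth}) applied to the pair $(\phi, \bar\phi)$, and the first-order optimality condition for $\bar\phi$ over the convex set $\calS_B$, namely $\langle \delta J(\phi) - L(\bar\phi - \phi), \tilde\phi - \bar\phi\rangle_{L^2(\nu)} \leq 0$. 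Algebraic rearrangement via the polarisation identity $2\langle \bar\phi - \phi, \bar\phi - \tilde\phi\rangle = \|\bar\phi - \phi\|^2 + \|\bar\phi - \tilde\phi\|^2 - \|\phi - \tilde\phi\|^2$ collapses the three bounds into the displayed inequality.

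\textbf{Step 3 (Beck-Teboulle weighting and telescoping).} Let $v_n := J(\tilde\phi^\star) - J(\bar\phi^n) \geq 0$ be the suboptimality. Apply Step 2 at iteration $n+1$ (so $\phi = \phi^{n+1}$, $\bar\phi = \bar\phi^{n+1}$) twice, once with $\tilde\phi = \bar\phi^n$ and once with $\tilde\phi = \tilde\phi^\star$; form the combination ``$(t_{n+1} - 1) \times$ first plus $1 \times$ second'', then multiply through by $t_{n+1}$. Two identities now do the work: the definition $t_{n+1}(t_{n+1} - 1) = t_n^2$, and the extrapolation relation $t_{n+1}\phi^{n+1} - (t_{n+1} - 1)\bar\phi^n = t_n \bar\phi^n - (t_n - 1)\bar\phi^{n-1}$. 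Setting $u_n := t_n \bar\phi^n - (t_n - 1)\bar\phi^{n-1} - \tilde\phi^\star$, these identities turn the combined inequality into the telescoping bound
\begin{equation*}
\frac{2}{L}\bigl(t_{n+1}^2 v_{n+1} - t_n^2 v_n\bigr) \leq \|u_n\|_{L^2(\nu)}^2 - \|u_{n+1}\|_{L^2(\nu)}^2~.
\end{equation*}
Summing from $n = 0$ to $N - 1$ with the conventions $t_0 = 0$ and $\phi^1 = \bar\phi^0$ (so that $u_0 = \bar\phi^0 - \tilde\phi^\star$), and invoking the elementary induction bound $t_N \geq (N + 1)/2$, yields $v_N \leq \frac{2L \|\bar\phi^0 - \tilde\phi^\star\|_{L^2(\nu)}^2}{(N + 1)^2}$, which is the claim.

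\textbf{Main obstacle.} The substantive difficulty is that \cref{lem:L2nu-smooth} is non-uniform: its constant $\lambda(\cdot)$ depends on the $L^\infty$-radius of the domain considered, whereas the classical FISTA analysis assumes a single global smoothness modulus. The momentum step of \ref{eqn:acc-proj-sga-update} generally pushes the trial iterate $\phi^{n+1}$ outside $\calS_B$, yet Step 2 needs smoothness to hold at this very point. Resolving this mismatch is precisely what motivates the choice $L = \lambda(3B)$ in the step size, together with the boundedness verification of Step 1. Once these are in place, the remainder is a faithful transcription of the Beck-Teboulle calculation to the $L^2(\nu)$ Hilbert geometry under the concave/maximisation sign convention, together with the routine fact that the $L^2(\nu)$-projection onto the convex subset $\calS_B$ obeys the variational inequality used in Step 2.
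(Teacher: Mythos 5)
Your proposal is correct and follows essentially the same FISTA-style argument as the paper: confining the momentum iterates $\phi^n$ to $\calS_{3B}$ so that \cref{lem:L2nu-smooth} applies with constant $\lambda(3B)$, deriving a three-point inequality (your Step 2 is \cref{lem:lem2.3-bt09-general} after polarisation), and then applying the Beck--Teboulle weighting and telescoping (your Step 3 is \cref{lem:key-identity} plus the final summation). The only cosmetic difference is that you absorb the base case into the convention $t_0 = 0$ rather than treating the $n=1$ inequality separately as the paper does in \cref{eqn:first-iterate}.
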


We give the proof of \cref{thm:proj-sga-conv-rate,thm:acc-proj-sga-conv-rate} in \cref{app:sec:prf:proj-sga-conv-rate,app:sec:prf:acc-proj-sga-conv-rate}.
From \cref{prop:linf-bound-poten}, we know that the maximum value of the semi-dual \(J\) over \(\calS_{B}\) for \(B = \frac{3\|c\|_{L^{\infty}(\calX \times \calY)}}{2}\) is \(J(\phi^{\star})\) for the Schr\"{o}dinger potential \(\phi^{\star}\).
This implies that the sequences of semi-dual values generated by \ref{eqn:proj-sga-update} and \ref{eqn:acc-proj-sga-update} with the appropriate step sizes converge to \(J(\phi^{\star})\) at a \(\frac{1}{N}\) and \(\frac{1}{N^{2}}\) rate respectively.
A crude bound on \(\lambda(B)\) in this setting is given by \(\lambda(B) \leq \exp\left(B \cdot (\varepsilon^{-1} + 2)\right)\).

We would like to mention that this is not the only accelerated method for the eOT problem.
One can take advantage of the structure of \(\calX\) and \(\calY\), particularly when they are discrete spaces to directly accelerate \(\sfM^{\textsf{SGA}}\) instead of \(\sfM^{\textsf{proj-SGA}}_{\calS_{B}}\) analogous to accelerating gradient descent to minimise a convex, smooth function in finite dimension.
In that special case, the semi-dual is concave and also smooth in the canonical sense as implied by \cref{lem:Linf-smooth} due to the monotonicity of norms, and this leads to a rate that scales as \(\frac{1}{N^{2}}\).
Alternatively, one could possibly also design accelerated algorithms for minimising \(\rho \mapsto \calL_{k}(\rho_{\calY}; \nu)\) subject to the constraint \(\rho \in \calQ\) in contrast to \ref{eqn:acc-proj-sga-update} which is based on the semi-dual.
The constraint ensures that the solution has the form of the optimal coupling \(\pi^{\star}\). However, designing an accelerated method for this problem in the flavour of accelerated MD for constrained optimisation can prove challenging, due to the non-convexity of the set \(\calQ\) and the need for linear combination of past iterates when considering momentum.
In \cref{sec:md-sga-acceleration}, we elaborate on the challenge of building acceleration based on this primal design.

We conclude this discussion by drawing a comparison to \emph{how} the rates for \ref{eqn:sga-update}, \ref{eqn:sign-sga-update}, and \ref{eqn:proj-sga-update} have been established.
Specifically, the techniques used to prove \cref{thm:sign-sga-conv-rate,thm:proj-sga-conv-rate,thm:acc-proj-sga-conv-rate} are fundamentally different from \cref{thm:kernel-sga-update-rate}.
This highlights the benefits of considering alternate viewpoints for establishing provable guarantees.


\subsection{Path-space \textsf{\(\Phi\)-match} for the Schr\"{o}dinger Bridge problem}
\label{sec:extensions:SB}

The eOT problem in the static Schr\"{o}dinger bridge form (\cref{eqn:static_SB}) can be viewed as finding a certain distribution in \(\calP(\calX \times \calY)\) closest to another reference distribution in \(\calP(\calX \times \calY)\).
This can also be posed for distributions over curves \(\calC([0, T]; \calZ)\) for \(\calZ \subseteq \bbR^{d}\) more generally, and a path measure precisely captures the notion of a probability measure over trajectories.
For a stochastic process \((X_{t})_{t \in [0, T]}\) with state space \(\calZ \subseteq \bbR^{d}\), the path measure is a collection of distributions \((\bbP_{t})_{t \in [0, T]}\) where \(\bbP_{t}\) is the law of \(X_{t}\).
The dynamical Schr\"{o}dinger bridge problem is formulated to find the path measure with endpoints \(\bbP_{0} = \mu\) and \(\bbP_{T} = \nu\) that is closest to the reference path measure.
Formally,
\begin{equation}
\label{eqn:SB}
\min_{\bbP} ~\KLdist(\bbP \| \bbP^{\textsf{ref}}) \quad \text{such that } \bbP_{0} = \mu, \bbP_{T} = \nu~.
\end{equation}
This is a (strictly) convex problem defined in  \(\calP(\calC([0, T], \calZ))\).
The classical result of \citet{follmer1988random} shows that by the disintegration property of the KL divergence, the optimal path measure \(\bbP^{\star}\) that solves \cref{eqn:SB} can be decoupled as
\begin{equation*}
    \bbP^{\star}_{\{0, T\}} = \pi^{\star}~; \quad \bbP^{\star}_{(0, T) | X_{0} , X_{T} } = \bbP^{\textsf{ref}}_{(0, T) | X_{0}, X_{T} }
\end{equation*}
where \(\pi^{\star}\) is the solution to the (primal) eOT problem in the static Schr\"{o}dinger bridge form (\cref{eqn:static_SB}) with \(\pi^{\textsf{ref}} = \bbP^{\textsf{ref}}_{0, T}\), and \(\bbP_{(0, T) | X_{0}, X_{T} }\) is the terminal-conditioned measure (also termed the bridge). 
Due to the above disintegration property and the form of the solution of the static Schr\"{o}dinger bridge problem in \cref{eqn:eOT-opt-form}, under certain regularity conditions (see \citet[Thms. 2.8 and 2.9]{leonard2014survey}), there exists measurable functions \(\bm{\psi}_{0}\) and \(\bm{\phi}_{T}\) such that the relative density of \(\bbP^{\star}\) w.r.t. \(\bbP^{\textsf{ref}}\) satisfies
\begin{equation}
\label{eqn:path-factorise}
    \frac{\rmd\bbP^{\star}_{[0, T]}}{\rmd\bbP^{\textsf{ref}}_{[0, T]}}((X_{t})_{t \in [0, T]}) = \exp\left(\bm{\phi}_{T}(X_{T}) - \bm{\psi}_{0}(X_{0})\right)~.
\end{equation}
Moreover, if \(\bbP^{\textsf{ref}}\) is Markovian i.e., if the underlying stochastic process is a Markov process, then the factorisation in \cref{eqn:path-factorise} is necessary and sufficient for the characterisation of \(\bbP^{\star}\) and is unique.
This parallels the unique form of the optimal coupling form in \cref{eqn:eOT-opt-form}.

An example of a reversible Markov process \(\bbP^{\textsf{ref}}\) is the reversible Brownian motion on \([0, T]\).
In this case, the density of \(\bbP^{\textsf{ref}}_{\{0, T\}}\) satisfies \(\bbP^{\textsf{ref}}_{\{0, T\}}(x, y) \propto \exp\left(-\frac{\|x - y\|^{2}}{2T}\right)\).
Finding \(\pi^{\star}\) is equivalent to solving the eOT problem (\cref{eqn:eOT-prob-primal}) with cost \(c(x, y) = \frac{\|x - y\|^{2}}{2}\) and \(\varepsilon = T\), and the bridge in this setting corresponds to the classical Brownian bridge which draws a more substantive connection between eOT and the dynamic SB problem.
However, in general, it is not feasible to directly sample from the bridge, and this motivates the design of generic methods for solving \cref{eqn:SB}.
\paragraph{Iterative Proportional Fitting}
A popular algorithm for solving the dynamic Schr\"{o}dinger bridge problem is commonly referred to as the \emph{Iterative Proportional Fitting} (IPF) algorithm defined by the iteration below.
\begin{equation*}
    \bbP^{n+\nicefrac{1}{2}} = \argmin_{\overline{\bbP}}\left\{ \KLdist(\overline{\bbP} \| \bbP^{n}) ~:~ \overline{\bbP}_{T} = \nu\right\}~, \quad \mathbb{P}^{n+1} = \argmin_{\overline{\bbP}} \left\{ \KLdist(\overline{\bbP} \| \bbP^{n+\nicefrac{1}{2}}) ~:~ \overline{\bbP}_{0} = \mu\right\}~.
\end{equation*}
This can be viewed as the path-space analogue of the alternating projection form of the Sinkhorn algorithm.
\citet{ruschendorf1995convergence} shows that the initial and final marginals of the sequence of iterates \(\{\bbP^{n}\}_{n \geq 1}\) generated by the above iteration with \(\bbP^{0} = \bbP^{\textsf{ref}}\) converges to \(\mu\) and \(\nu\) respectively as \(n \to \infty\), and \citet{bernton2019schr} gives a non-asymptotic rate of convergence that scales as \(\frac{1}{N}\) in the number of iterations \(N\) building off the result by \citet{ruschendorf1995convergence}.
In \citet[Prop. 2]{karimi2024sinkhorn}, it is shown that this iteration is the solution to the following local greedy update:
\begin{equation*}
    \bbP^{n + 1} = \argmin \left\{\langle \delta \KLdist(\bbP^{n}_{T} \| \nu), \overline{\bbP} - \bbP^{n}\rangle_{L^{2}(\calC([0, T], \calZ))} + \KLdist(\overline{\bbP} \| \bbP^{n}) ~:~ \overline{\bbP}_{0} = \mu\right\}~.
\end{equation*}

\paragraph{From IPF to \textsf{path-\(\Phi\)-match}}
The similarities between the local greedy update above and \cref{eqn:proximal-point} motivate us to propose a path-space analogue of \ref{eqn:phi-match-update}.
For \(\eta \in [0, 1]\), define
\begin{equation}
\label{eqn:proximal-point-path-space}
    \bbP^{n + 1} = \argmin_{\overline{\bbP}}~\left\{\langle \overline{\calV}_{\Phi}(\bbP^{n}),~ \overline{\bbP} - \bbP^{n}\rangle + \eta^{-1} \cdot \KLdist(\overline{\bbP} \| \bbP^{n}) ~:~ \overline{\bbP}_{0} = \mu\right\}~
\end{equation}
where \(\overline{\calV}_{\Phi}(\bbP) = \log \Phi(\bbP_{T}) - \log \Phi(\nu)\).
When \(\Phi(f) = f\), \(\overline{\calV}_{\Phi}\) is the first variation of \(\rho \mapsto \KLdist(\rho_{T} \| \nu)\), and \cref{eqn:proximal-point-path-space} recovers the interpretation of IPF stated above with $\eta=1$.
Similar to the equivalence in \cref{lem:proximal-point}, it can be shown that this local greedy update can be expressed a sequence of alternating projections but in the space of path measures as written below.
\begin{subequations}
\makeatletter
\def\@currentlabel{\textsf{path-\(\Phi\)-match}}
\makeatother
\label{eqn:path-phi-match-update}
\renewcommand{\theequation}{\textsf{path-\(\Phi\)-match}(\alph{equation})}
\begin{align}
    \bbP^{n + \nicefrac{1}{2}} &= \argmin_{\overline{\bbP}} \left\{\KLdist(\overline{\bbP} \| \bbP^{n}) ~:~ \overline{\bbP}_{T} \propto \bbP^{n}_{T} \cdot \frac{\Phi(\nu)}{\Phi(\bbP^{n}_{T})}\right\} \label{eqn:first_half_IPFP}\\
    \bbP^{n + 1} &= \argmin_{\overline{\bbP}}  \left\{\eta \cdot \KLdist(\overline{\bbP} \| \bbP^{n + \nicefrac{1}{2}}) + (1 - \eta) \cdot \KLdist(\overline{\bbP} \| \bbP^{n})~:~ \overline{\bbP}_{0} = \mu\right\}~.\label{eqn:second_half_IPFP}
\end{align}
\end{subequations}
As it turns out, \ref{eqn:path-phi-match-update} (and therefore path measure update \cref{eqn:proximal-point-path-space}) can be implemented as updates on the drifts of a sequence of SDEs whose corresponding solutions are given by \(\{\bbP^{n}\}_{n \geq 1}\). Let \(\bbP^{\textsf{ref}}\) be the strong solution of an SDE (hence Markovian) as 
\begin{equation}
\label{eqn:ref_process_sde}
    \rmd X_t = u^{\textsf{ref}}(X_t)\rmd t + \sqrt{2} \cdot \rmd B_t~, \quad X_{0} \sim \mu~.
\end{equation}
By Girsanov's theorem \cite[Chap. 3]{pavliotis2014stochastic}, the dynamical SB problem can be reduced to a minimisation problem over path measures \(\bbP\) induced by SDEs with a different drift vector field \((v_{t})_{t \geq 0}\)
\begin{equation}
\label{eqn:posterior_process_sde}
    \rmd X_{t} = v_{t}(X_{t})\rmd t + \sqrt{2}\cdot \rmd B_{t}~, \quad X_0\sim \mu~.
\end{equation}
We formally characterise this recursive update on $\{v_t^n\}_{n\geq 1, t\in [0,T]}$ in the following proposition.

\begin{proposition}
\label{prop:path-space-phi-match-update-SDE}
Let \(\{\bbP^{n}\}_{n \geq 1}\) be obtained from \emph{\ref{eqn:path-phi-match-update}} with \(\bbP^{0} = \bbP^{\emph{\textsf{ref}}}\).
Then for every \(n \geq 1\), \(\bbP^{n}\) is the path measure associated with the solution of the following SDE:
\begin{equation*}
    \rmd X_{t} = v_{t}^{n}(X_{t})\rmd t + \sqrt{2} \cdot \rmd B_{t}~,\quad X_{0} \sim \mu~,
\end{equation*}
where \(v_{t}^{0} = u^{\emph{\textsf{ref}}}\) and for every \(n \geq 0\),
\begin{equation*}
    v_{t}^{n + 1} = v_{t}^{n} + 2\eta \cdot (\nabla \log p_{t}^{n + \nicefrac{1}{2}} - \nabla \log p_{t}^{n}) - 2 \cdot \nabla V_{t}
\end{equation*}
with \(p^{n + \nicefrac{1}{2}}_{t}\) and \(p^{n}_{t}\) denoting marginal densities of \(\bbP^{n + \nicefrac{1}{2}}\) and \(\bbP^{n}\) respectively. Above \(V_{t}\) is defined as
\begin{equation*}
    V_{t}(x) := -\log \bbE \left[\left.\exp\left(-\eta \cdot (1 - \eta) \cdot \int_{t}^{T} \|\nabla \log p_{s}^{n + \nicefrac{1}{2}}(Z_{s}) - \nabla \log p_{s}^{n}(Z_{s})\|^{2} \rmd s\right) ~\right|~ Z_{t} = x\right]
\end{equation*}
where the expectation is taken over the law of the SDE $(Z_s)_{s\geq t}$ given by
\begin{equation*}
    \rmd Z_{s} = [v_{s}^{n}(Z_{s}) + 2\eta \cdot (\nabla \log p_{s}^{n + \nicefrac{1}{2}}(Z_{s}) - \nabla \log p_{s}^{n}(Z_{s}))]~\rmd s + \sqrt{2} \cdot \rmd B_{s}~;\; Z_{t} = x~.
\end{equation*}
\end{proposition}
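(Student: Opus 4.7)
The plan is to analyse the two halves of \ref{eqn:path-phi-match-update} in sequence, keeping track of the resulting drift update in each step, and then combine the two via a Girsanov reweighting.

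\textbf{First half.} The projection \eqref{eqn:first_half_IPFP} is a KL projection onto the set of path measures with a prescribed terminal marginal. By disintegration of the KL divergence along $X_T$, its unique minimiser satisfies $\rmd\bbP^{n+\nicefrac{1}{2}}/\rmd\bbP^{n} = g(X_T)$ with $g \propto \Phi(\nu)/\Phi(\bbP^{n}_T)$, so $\bbP^{n+\nicefrac{1}{2}}$ is a Doob $h$-transform of $\bbP^{n}$ by $h_t(x) = \bbE_{\bbP^{n}}[g(X_T)\mid X_t = x]$. Since $\bbP^{n}$ is the law of an SDE with diffusion $\sqrt{2}$, the $h$-transformed drift is $v_t^{n+\nicefrac{1}{2}} = v_t^{n} + 2\,\nabla \log h_t$. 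Because marginals satisfy $p_t^{n+\nicefrac{1}{2}}(x) \propto p_t^{n}(x)\,h_t(x)$, we identify $\nabla \log h_t = \nabla \log p_t^{n+\nicefrac{1}{2}} - \nabla \log p_t^{n}$, hence
\[
v_t^{n+\nicefrac{1}{2}} - v_t^{n} \;=\; 2\bigl(\nabla \log p_t^{n+\nicefrac{1}{2}} - \nabla \log p_t^{n}\bigr)~.
\]

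\textbf{Combining the second half into a single KL projection.} Expanding the objective of \eqref{eqn:second_half_IPFP} yields, up to a constant,
\[
\eta\,\KLdist(\overline{\bbP}\|\bbP^{n+\nicefrac{1}{2}}) + (1-\eta)\,\KLdist(\overline{\bbP}\|\bbP^{n}) \;=\; \KLdist(\overline{\bbP}\|\bbQ) - \log Z~,
\]
with $\rmd \bbQ/\rmd \bbP^{n} \propto (\rmd\bbP^{n+\nicefrac{1}{2}}/\rmd\bbP^{n})^{\eta}$. The constrained minimiser over $\{\overline{\bbP}_0 = \mu\}$ has density $\mu/\bbQ_0(X_0)$ w.r.t.\ $\bbQ$. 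By induction, $\bbP^{n}_0 = \mu$, and since the first half only reweights by a function of $X_T$ we also have $\bbP^{n+\nicefrac{1}{2}}_0 = \mu$, so $\bbQ_0 = \mu$ and the projection is trivial: $\bbP^{n+1} = \bbQ$.

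\textbf{Girsanov--Feynman--Kac decomposition.} It remains to read off the drift of $\bbQ$. By Girsanov applied to the two SDE drifts $v^{n}, v^{n+\nicefrac{1}{2}}$ (both with diffusion $\sqrt{2}$),
\[
\log \frac{\rmd\bbP^{n+\nicefrac{1}{2}}}{\rmd\bbP^{n}} \;=\; \tfrac{1}{\sqrt{2}}\!\int_0^T (v_t^{n+\nicefrac{1}{2}}-v_t^{n})\cdot \rmd B_t^{\bbP^{n}} \;-\; \tfrac{1}{4}\!\int_0^T \|v_t^{n+\nicefrac{1}{2}}-v_t^{n}\|^{2}\,\rmd t~.
\]
Raising to the power $\eta$ and completing the square so that the stochastic integral is of the form $\int \tilde{\theta}_t \rmd B_t^{\bbP^{n}} - \tfrac{1}{2}\int \|\tilde{\theta}_t\|^2\rmd t$ with $\tilde{\theta}_t = \eta(v_t^{n+\nicefrac{1}{2}} - v_t^{n})/\sqrt{2}$ produces a second Girsanov density $\rmd\tilde{\bbP}/\rmd\bbP^{n}$, where $\tilde{\bbP}$ is the law of the SDE with drift $\tilde{v}_t = v_t^{n} + \eta(v_t^{n+\nicefrac{1}{2}} - v_t^{n}) = v_t^{n} + 2\eta(\nabla \log p_t^{n+\nicefrac{1}{2}} - \nabla \log p_t^{n})$, and a residual multiplicative term of
\[
\exp\!\left(-\tfrac{\eta(1-\eta)}{4}\!\int_0^T \|v_t^{n+\nicefrac{1}{2}} - v_t^n\|^2\,\rmd t\right) = \exp\!\left(-\eta(1-\eta)\!\int_0^T \|\nabla\log p_t^{n+\nicefrac{1}{2}} - \nabla\log p_t^{n}\|^2\,\rmd t\right)~.
\]
Thus $\rmd\bbQ/\rmd\tilde{\bbP}$ is, up to normalisation, a Feynman--Kac weight $\exp(-\int_0^T \Psi_s(X_s)\rmd s)$ with $\Psi_s = \eta(1-\eta)\|\nabla\log p_s^{n+\nicefrac{1}{2}} - \nabla\log p_s^{n}\|^2$. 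The conditional expectation $h_t(x) = \bbE_{\tilde{\bbP}}[\exp(-\int_t^T \Psi_s\,\rmd s)\mid X_t = x] = e^{-V_t(x)}$ is precisely the definition of $V_t$ in the statement, and a direct application of Itô's formula to $M_t = h_t(X_t)\exp(-\int_0^t \Psi_s\,\rmd s)$ together with the Feynman--Kac PDE $\partial_t h_t + L^{\tilde{\bbP}} h_t = \Psi_t h_t$ shows $\rmd M_t = M_t \sqrt{2}\,\nabla \log h_t(X_t)\cdot \rmd B_t^{\tilde{\bbP}}$. Girsanov then tells us that under $\bbQ$ the drift shifts by $2\nabla \log h_t = -2\nabla V_t$, giving
\[
v_t^{n+1} \;=\; \tilde{v}_t - 2\nabla V_t \;=\; v_t^{n} + 2\eta\bigl(\nabla \log p_t^{n+\nicefrac{1}{2}} - \nabla \log p_t^{n}\bigr) - 2\nabla V_t~,
\]
as claimed.

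\textbf{Main obstacle.} The delicate step is the decomposition of $(\rmd\bbP^{n+\nicefrac{1}{2}}/\rmd\bbP^{n})^{\eta}$ into a genuine Girsanov exponential times a Feynman--Kac weight, since $\eta \in (0,1)$ breaks the Doléans--Dade structure and produces the quadratic correction with coefficient $\eta(1-\eta)/4$ that must then be recognised and re-packaged as a time-integral potential. The rest is bookkeeping: verifying that $\bbQ_0 = \mu$ so the second KL projection is the identity, and identifying the Feynman--Kac $h_t$ with the stated $e^{-V_t}$ via a standard $h$-transform argument.
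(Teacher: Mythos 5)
The final drift formula you derive is correct and essentially the same as the paper's, but one load-bearing claim in the middle of your argument is false, and the way you route around it needs to be stated explicitly rather than silently relied upon.

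Specifically, you claim that because \(\rmd\bbP^{n+\nicefrac{1}{2}}/\rmd\bbP^{n}\) is a function of \(X_T\) only, we have \(\bbP^{n+\nicefrac{1}{2}}_0 = \mu\) and hence \(\bbQ_0 = \mu\), making the second KL projection trivial. This is not so: \(\bbP^{n+\nicefrac{1}{2}}_0(\rmd x_0) = \bbE_{\bbP^{n}}[g(X_T)\,|\,X_0 = x_0]\,\mu(\rmd x_0)\big/\bbE_{\bbP^{n}}[g(X_T)]\), and \(\bbE_{\bbP^{n}}[g(X_T)\,|\,X_0 = x_0]\) is not constant in \(x_0\) because \(X_0\) and \(X_T\) are dependent under \(\bbP^{n}\). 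The same objection applies to your Girsanov density for \(\rmd\bbP^{n+\nicefrac{1}{2}}/\rmd\bbP^{n}\), which as written omits the ratio \(p_0^{n+\nicefrac{1}{2}}(X_0)/\mu(X_0)\) of initial densities. Consequently \(\bbP^{n+1} \neq \bbQ\): the \(\calX\)-marginal projection really does act.

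What saves you is that the projection onto \(\{\overline{\bbP}_0 = \mu\}\) reweights by a function of \(X_0\) only, i.e. \(\rmd\bbP^{n+1}/\rmd\bbQ = \mu(X_0)/\bbQ_0(X_0)\), and therefore leaves the conditional transition kernel — hence the SDE drift — unchanged. Likewise, the missing initial-density factor in your Girsanov expansion and its \(\eta\)-power both enter only as functions of \(X_0\) and cancel out of the conditional Radon–Nikodym derivative that determines the drift. So your Girsanov/Doléans--Dade completion of the square, the identification of \(\Psi_s = \eta(1-\eta)\|\nabla\log p_s^{n+\nicefrac{1}{2}} - \nabla\log p_s^{n}\|^2\), and the Feynman--Kac \(h\)-transform producing the extra drift \(-2\nabla V_t\) are all still valid at the level of the drift. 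Your argument should say this explicitly — perform the KL decomposition \(\KLdist(\overline{\bbP}\|\cdot) = \KLdist(\overline{\bbP}_0\|\cdot_0) + \bbE[\KLdist(\overline{\bbP}_{(0,T]|X_0}\|\cdot_{(0,T]|X_0})]\) so that the initial-marginal term is handled separately — rather than asserting \(\bbQ_0 = \mu\).

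As for how your route compares to the paper's: the paper identifies \(\bbP^{n+\nicefrac{1}{2}}\) as the time-reversal of the SDE underlying \(\bbP^{n}\), started from the reweighted terminal law \(\widetilde{\bbP}^{n}\), and then invokes a cited theorem of \citet{karimi2024sinkhorn} for the drift of the \(\eta\)-KL barycentre under the \(\calX\)-marginal constraint. Your forward-time Doob \(h\)-transform characterisation of \(\bbP^{n+\nicefrac{1}{2}}\) arrives at the same drift \(v_t^{n+\nicefrac{1}{2}} = v_t^{n} + 2(\nabla\log p_t^{n+\nicefrac{1}{2}} - \nabla\log p_t^{n})\) and is equivalent. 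Your Girsanov plus Feynman--Kac derivation of the second step is effectively a self-contained proof of the cited result, which is more informative than the paper's citation but requires the care about initial marginals noted above.
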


The above proposition extends \citet[Thm. 4.2]{karimi2024sinkhorn} for IPF to \ref{eqn:path-phi-match-update}.
The key difference here is that the path measure \(\bbP^{n + \nicefrac{1}{2}}\) is defined by the time-reversal of the SDE with respect to \(\bbP^{n}\), but with initial distribution whose density is proportional to \(\bbP^{n}_{T} \cdot \frac{\Phi(\nu)}{\Phi(\bbP_{T}^{n})}\), instead of \(\nu\) as for the case of $\eta$-IPF.
\citet[Sec. C.2]{karimi2024sinkhorn} discuss approaches for computing \(\nabla V_{t}\) through stochastic optimal control, and the other terms $\nabla \log p_t^{n+1/2},\nabla \log p_t^n$ can be obtained through score-matching time-reversals as in the classical IPF algorithm where $\eta=1$. We give details of the proof of this proposition in \cref{app:sec:prf:SB-details}.

The connection to \ref{eqn:phi-match-update} can also be seen in the following manner.
Suppose \(\bbP^{n}\) satisfies the factorisation form in \cref{eqn:path-factorise} with some \(\bm{\phi}^{n}_{T}\) and \(\bm{\psi}^{n}_{0}\), and note that \(\bbP^{n}_{0} = \mu\).
Then, one can show analogous to \cref{lem:KL_primal_projection} that \(\bbP^{n + 1}\) admits the factorisation form in \cref{eqn:path-factorise} where
\begin{equation*}
    \bm{\phi}^{n + 1}_{T} = \bm{\phi}^{n}_{T} - \eta \cdot (\log \Phi(\bbP^{n}_{T}) - \log \Phi(\nu))~
\end{equation*}
and \(\bm{\psi}_{0}^{n + 1}\) set to satisfy \(\bbP^{n + 1}_{0} = \mu\) is equivalent to the output of \ref{eqn:path-phi-match-update}.
From this, it can be seen that for each \(n \geq 0\), \(\bbP^{n + 1}_{\{0, T\}}\) is the solution for the static Schr\"{o}dinger bridge problem with marginals \((\mu, \bbP^{n}_{T})\), and \(\bbP^{n + 1}_{(0, T) | X_{0}, X_{T}} = \bbP^{\textsf{ref}}_{(0, T) | X_{0}, X_{T}}\).
As a consequence, if \(\bbP^{n}_{T} \to \nu\) then \(\bbP^{n} \to \bbP^{\star}\).
We expand on the value of this factorisation in the following discussion about a potential-space implementation of \ref{eqn:path-phi-match-update}.

\paragraph{A ``dual'' version of \textsf{path-\(\Phi\)-match}}
We investigate here a ``dual'' potential-space implementation which is based on the factorisation form in \cref{eqn:path-factorise} in contrast to the discussion above which was based on the path measures.
This connects to updates from \ref{eqn:phi-match-update} for eOT, and is similar to the continuous flow of SDE viewpoint in \citet[Sec. 4.4]{karimi2024sinkhorn}.
Let \(\phi^{\star}\) and \(\psi^{\star}\) be the Schr\"{o}dinger potentials associated with the static problem (\cref{eqn:static_SB}) with marginals \(\mu, \nu\), and suppose \((f_{t}^{\star})_{t \geq 0}\) and \((g_{t}^{\star})_{t \geq 0}\) are solutions to the Kolmogorov forward and backward equations under \cref{eqn:ref_process_sde}:
\begin{align*}
\partial_t f^\star_t +\nabla\cdot(u^{\textsf{ref}} f^\star_t) -\Delta f^\star_t=0~, & \quad f^\star_0(x) = e^{\psi^\star(x)}~, \\
\partial_t g^\star_t +u^{\textsf{ref}\top}\nabla g_t^{\star}   + \Delta g^\star_t = 0~, & \quad g^\star_T(y)=e^{\phi^\star(y)}~.
\end{align*}
Then, according to \citet[Thm. 3.4]{leonard2014survey} and the Markov property of \(\bbP^{\textsf{ref}}\), we have
\begin{equation*}
\frac{\rmd\mathbb{P}_{[k,l]}^\star}{\rmd\mathbb{P}_{[k,l]}^{\textsf{ref}}}((X_{t})_{t \in [k,l]})=  f_k^\star(X_k) g_l^\star(X_l) \quad \forall ~k < l \in [0,T]~.
\end{equation*}
Therefore with their dynamics fixed, knowing the two boundary functions $\bm{\phi}_T^\star,\bm{\psi}_0^\star$ (\cref{eqn:path-factorise}) gives us all the information about the optimal path measure $\mathbb{P}^{\star}$.
Taking hints, we keep our sequence of path measures $\{\mathbb{P}^n\}_{n\geq 1}$ in the factorized form involving $\bm{\phi}_T^n,\bm{\psi}_0^n$ alone:
\begin{equation}
\label{eqn:factorize_SB_path}
\frac{\rmd\mathbb{P}^n_{[0, T]}}{\rmd\mathbb{P}^{\textsf{ref}}_{[0, T]}}((X_t^n)_{t\in [0, T]}) = \frac{\mathbb{P}^{n}_{0,T}}{\mathbb{P}_{0,T}^{\textsf{ref}}}(X_0^n,X_T^n)=\exp(\bm{\phi}^n_T(X_T^n)+\bm{\psi}^n_0(X_0^n))~.
\end{equation}
This implies that they always solve the SB problem for their own marginals, and consequently remain within the reciprocal and Markovian class.
If moreover, $\bm{\psi}_0^n = (\bm{\phi}_T^{n})^{+}$ then we always have $\mathbb{P}^{n}_{0}=\mu$. In the lemma below, we derive alternative updates on the SDE drift by leverging
the corresponding dual Schrödinger potentials from the static setting, whose proof can be found in \cref{subsec:proof:SB_dual_formal}.

\begin{lemma}
\label{lem:SB_dual_formal}
Let \(\{\phi^{n}\}_{n \geq 1}\) be the sequence of potentials obtained from \ref{eqn:phi-match-update}.
For any \(n \geq 1\), let  \((g_{t}^{n})_{t \in [0, T]}\) be the solution to the Kolmogorov backward equation
\begin{equation*}
\partial_t g_t^n +\nabla g_t^{n \top} u^{\textsf{ref}}  + \Delta g_t^n = 0, \;\; g_T^n(y)=e^{\phi^n(y)}~.
\end{equation*}
The path measure \(\bbP^{n}\) associated with the following SDE
\begin{equation}
\label{eqn:sde_twisted}
\rmd X_t^n= \left\{u^{\emph{\textsf{ref}}}(X_t^n)+\nabla \log g_t^n(X_t^n)\right\} \rmd t + \sqrt{2} \cdot \rmd B_{t}~, \quad X_0^n\sim\mu
\end{equation}
factorises as \cref{eqn:factorize_SB_path} and converges to $\bbP^\star$ at the same rate as the sequence $\{\phi^n\}_{n \geq 1}$ converges to $\phi^\star$ for the eOT problem.
\end{lemma}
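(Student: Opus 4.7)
My plan is to establish three facts: (a) the path measure $\bbP^n$ induced by the twisted SDE \cref{eqn:sde_twisted} has a Radon--Nikodym derivative with respect to $\bbP^{\textsf{ref}}$ that depends only on the endpoints $X_0^n,X_T^n$; (b) this endpoint factorisation makes $\bbP^n_{\{0,T\}}$ coincide with the coupling $\pi(\phi^n,(\phi^n)^+)$ produced by the static $\Phi$-match iteration, while the bridge $\bbP^n_{(0,T)\mid X_0,X_T}$ equals $\bbP^{\textsf{ref}}_{(0,T)\mid X_0,X_T}$; and (c) by disintegration of the KL divergence, convergence of $\phi^n\to\phi^\star$ lifts to $\bbP^n\to\bbP^\star$ at the same rate.

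For (a), I would invoke Doob's $h$-transform via Girsanov's theorem. Since $g_t^n$ satisfies the backward Kolmogorov equation for the reference generator, It\^o's formula shows that $g_t^n(X_t)$ is a positive $\bbP^{\textsf{ref}}$-martingale, and $M_T:=g_T^n(X_T)/g_0^n(X_0)$ is an exponential martingale. The measure $M_T\cdot\rmd\bbP^{\textsf{ref}}$ is therefore a probability measure, and Girsanov identifies its drift with the one appearing in \cref{eqn:sde_twisted}, namely $u^{\textsf{ref}}+\nabla\log g_t^n$. Because the reference SDE already starts from $\mu$ by construction (\cref{eqn:ref_process_sde}), no further adjustment of the initial law is needed and
\[
\frac{\rmd\bbP^n}{\rmd\bbP^{\textsf{ref}}}\bigl((X_t)_{t\in[0,T]}\bigr) \;=\; \frac{g_T^n(X_T^n)}{g_0^n(X_0^n)},
\]
which depends on the trajectory only through its endpoints, yielding the factorisation \cref{eqn:factorize_SB_path} with $\bm{\phi}_T^n=\phi^n$ and $\bm{\psi}_0^n=-\log g_0^n$.

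For (b), I would evaluate $g_0^n$ via Feynman--Kac: $g_0^n(x)=\bbE^{\textsf{ref}}[e^{\phi^n(X_T)}\mid X_0=x]$. Using the endpoint density of the reference path measure $\bbP^{\textsf{ref}}_{0,T}(x,y)\propto\exp(-c(x,y)/\varepsilon)\mu(x)\nu(y)$ inherited from \cref{eqn:static_SB}, one reads off $g_0^n(x)\propto e^{(\phi^n)^+(x)}$. Substituting into the expression from (a) and comparing with \cref{eqn:primal_pi} shows that $\bbP^n_{\{0,T\}}$ has density $\exp(\phi^n(y)-(\phi^n)^+(x)-c(x,y)/\varepsilon)$ with respect to $\mu\otimes\nu$, matching $\pi(\phi^n,(\phi^n)^+)$ exactly. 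Moreover, since the Radon--Nikodym derivative depends solely on endpoints, the conditional $\bbP^n_{(0,T)\mid X_0,X_T}$ coincides with $\bbP^{\textsf{ref}}_{(0,T)\mid X_0,X_T}$ automatically.

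For (c), by the chain rule for KL along the endpoints,
\[
\KLdist(\bbP^n\|\bbP^\star) \;=\; \KLdist\bigl(\bbP^n_{\{0,T\}}\,\big\|\,\bbP^\star_{\{0,T\}}\bigr) + \bbE_{\bbP^n_{\{0,T\}}}\!\bigl[\KLdist(\bbP^n_{(0,T)\mid\cdot,\cdot}\|\bbP^\star_{(0,T)\mid\cdot,\cdot})\bigr],
\]
and the second term vanishes because both bridges equal $\bbP^{\textsf{ref}}_{(0,T)\mid X_0,X_T}$. Hence $\KLdist(\bbP^n\|\bbP^\star)=\KLdist(\pi(\phi^n,(\phi^n)^+)\|\pi^\star)$, so the static rates from \cref{sec:k_sga_rate} and \cref{sec:extensions:steep-ascent} transfer verbatim, with analogous identities for the other $\Phi$-dependent discrepancies. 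The main obstacle will be the bookkeeping in step~(b): carefully matching normalising constants between the reference path measure's endpoint density and the static reference $\pi^{\textsf{ref}}$ so that the operator $(\cdot)^+$ appears exactly, with any residual additive constant in $\bm{\psi}_0^n$ absorbed by the shift invariance of $J$ noted in \cref{rmk:J_shift_invariance}.
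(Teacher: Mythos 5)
Your proposal follows essentially the same route as the paper's proof sketch: the paper also invokes Doob's $h$-transform (citing \citet{dai1991stochastic}) to argue that the tilted SDE has an endpoint-factorised Radon--Nikodym derivative, uses the Feynman--Kac / backward-Kolmogorov correspondence (citing \citet{caluya2021wasserstein,leonard2014survey}) to identify $g_0^n$ with $e^{(\phi^n)^+}$ up to normalisation, and then uses KL disintegration over endpoints to transfer the static rate. You make explicit the martingale/Girsanov argument and the Feynman--Kac evaluation of $g_0^n$ that the paper delegates to references, which is useful; the structure of the argument is identical.

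One point you glossed over deserves scrutiny: when you write ``Girsanov identifies its drift with the one appearing in \cref{eqn:sde_twisted}, namely $u^{\textsf{ref}}+\nabla \log g_t^n$,'' you have not actually carried out the Girsanov computation. With the reference SDE $\rmd X_t = u^{\textsf{ref}}\rmd t + \sqrt{2}\,\rmd B_t$ (generator $L = u^{\textsf{ref}}\cdot\nabla + \Delta$) and $g^n$ solving $\partial_t g + Lg = 0$, It\^{o} gives $\rmd g_t^n(X_t) = \sqrt{2}\,\nabla g_t^n(X_t)\cdot\rmd B_t$, so $M_t = g_t^n(X_t)/g_0^n(X_0)$ is the Dol\'{e}ans exponential of $\sqrt{2}\,\nabla\log g_t^n$, and Girsanov turns the drift into $u^{\textsf{ref}} + 2\,\nabla\log g_t^n$, not $u^{\textsf{ref}} + \nabla\log g_t^n$. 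The factor of $2$ is consistent with the companion \cref{prop:path-space-phi-match-update-SDE}, where $2\nabla\log p_t$ terms appear for the same diffusion coefficient; it appears the lemma as stated has dropped this factor. Your blind proposal inherits the discrepancy because it asserts rather than derives the drift at the Girsanov step; a careful computation there would have surfaced it. This does not alter the structure of the argument in parts (b) and (c), which remain sound once the drift is corrected.
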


Note that the additional drift in \cref{eqn:sde_twisted} implies that $v_t$ in \cref{eqn:posterior_process_sde} is necessarily a gradient vector field.
Through a Cole-Hopf transform, the Kolmogorov backward equation can be translated to a PDE in \(\phi_{t}^{n} = \log g_{t}^{n}\) instead.
This is also evident from the Feynman-Kac formula, which prescribes that \((\phi_{t}^{n})_{t \in [0, T]}\) can be expressed as \(\phi_t^n(y) = \log \mathbb{E}^{\textsf{ref}} [e^{\phi^n(x_T)}|x_t=y]\) with respect to the the reference process \cref{eqn:ref_process_sde}.

\section{Conclusion}
\label{sec:conclusion}
In this work, we systemically synthesise a variety of viewpoints on algorithms for eOT -- specifically those surrounding the Sinkhorn algorithm.
This synthesis, centered around infinite-dimensional optimisation, leads to a collection of novel methods based on either the primal or dual formulation of the eOT problem, allowing us to go beyond the classical Sinkhorn algorithm and IPF for the path-space / dynamic Schr\"{o}dinger bridge problem.
We also see how the viewpoints contribute to provable guarantees for these methods, which notably are not based on any strict assumptions on the marginals \(\mu, \nu\).
It would be interesting to see how these guarantees can be improved; going past the bounded kernel condition in \cref{thm:kernel-sga-update-rate}, and bounded costs condition to establish a desirable form of smoothness for the semi-dual in \cref{lem:L2nu-smooth}.
While this not only encourages a new theoretical perspective for solving the eOT / Schr\"{o}dinger bridge, we hope that this work also spurs the development of new practical methods other than the Sinkhorn / IPF algorithm and its derivatives for the eOT and Schr\"{o}dinger bridge problems respectively.

\subsubsection*{Acknowledgements}

The authors would like to thank Andre Wibisono and Ashia Wilson for their involvement during earlier stages of this work.

\bibliography{references.bib}

\newpage
\appendix

\setlength{\parindent}{0pt}

\section{Proofs}
\label{sec:proofs}
\subsection{Proofs for the properties of the semi-dual \(J\)}

\cref{lem:concavity-of-J,lem:Linf-smooth,lem:L2nu-smooth} are corollaries of the following more general lemma; its proof is given in \cref{sec:prf:intermediate-semi-dual}.

\begin{lemma}
\label{lem:broader-lem}
Let \(\phi, \overline{\phi} \in L^{1}(\nu)\).
For \(t \in [0, 1]\), define \(\tilde{\phi}_{t} := \phi + t \cdot (\overline{\phi} - \phi)\) and the conditional distribution \(\rho_{t}(.; x)\) whose density is
\begin{equation*}
    \rho_{t}(y; x) := \frac{\exp\left(\tilde{\phi}_{t}(y) - \frac{c(x, y)}{\varepsilon}\right)\nu(y)}{\int_{\calY} \exp\left(\tilde{\phi}_{t}(y') - \frac{c(x, y')}{\varepsilon}\right)\nu(y')\rmd y'}~.
\end{equation*}
Then,
\begin{equation*}
    J(\overline{\phi}) - J(\phi) - \langle \delta J(\phi), \overline{\phi} - \phi\rangle = -\frac{1}{2} \int_{0}^{1} \bbE_{x \sim \mu}\left[\bbV_{\rho_{t}(.; x)}[\overline{\phi} - \phi]\right]~\rmd t~.
\end{equation*}
\end{lemma}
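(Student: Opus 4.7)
The plan is to recognise the identity as a second-order Taylor expansion of the scalar function
\begin{equation*}
f(t) \;:=\; J(\tilde{\phi}_t), \qquad t\in[0,1],
\end{equation*}
with $f(0)=J(\phi)$ and $f(1)=J(\overline{\phi})$, and to identify the ``second derivative'' of $J$ along this line segment with an averaged conditional variance. This reduces the proof to two calculations: computing $f'(t)$ in a way that recovers $\delta J$, and computing $f''(t)$ in a way that brings in $\mathbb{V}_{\rho_t(\cdot;x)}$.

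For the first, set $h := \overline{\phi}-\phi$ and use $J(\phi) = \int \phi\,\mathrm{d}\nu - \int \phi^+\,\mathrm{d}\mu$. Because $\tilde{\phi}_t^+(x)$ is the log-normaliser of the unnormalised density $\exp(\tilde{\phi}_t(y)-c(x,y)/\varepsilon)\nu(y)$ on $\calY$, exchanging $\partial_t$ with the $y$-integral (justified by dominated convergence and integrability of $h$ against $\rho_t(\cdot;x)$ for $\mu$-a.e.\ $x$) yields
\begin{equation*}
f'(t) \;=\; \int h\,\mathrm{d}\nu \;-\; \int \mathbb{E}_{\rho_t(\cdot;x)}[h]\,\mathrm{d}\mu(x),
\end{equation*}
and specialising to $t=0$ recovers $f'(0) = \langle \delta J(\phi), h\rangle$ via \cref{fct:first-var}.

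The crux is the second calculation. Observing that $\{\rho_t(\cdot;x)\}_{t\in[0,1]}$ is an exponential tilt of $e^{-c(x,\cdot)/\varepsilon}\nu$ by the natural parameter $\tilde{\phi}_t$, differentiating in $t$ produces the score identity $\partial_t \rho_t(y;x) = \rho_t(y;x)\bigl(h(y) - \mathbb{E}_{\rho_t(\cdot;x)}[h]\bigr)$. Substituting this into $\partial_t \mathbb{E}_{\rho_t(\cdot;x)}[h]$ and recognising the right-hand side as a centred second moment gives
\begin{equation*}
f''(t) \;=\; -\int \mathbb{V}_{\rho_t(\cdot;x)}[h]\,\mathrm{d}\mu(x),
\end{equation*}
which is the functional analogue of the fact that the Hessian of a log-partition function equals the covariance of the sufficient statistic.

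The lemma then follows by applying Taylor's theorem with integral remainder to $f$ on $[0,1]$ and substituting the expression for $f''$. The main obstacle I foresee is only careful bookkeeping -- justifying the repeated interchanges of differentiation with $y$- and $x$-integrals, which boils down to uniform (in $t$) integrability of $h$ and $h^2$ against $\rho_t(\cdot;x)$ for $\mu$-a.e.\ $x$ under the standing hypotheses. All the genuine content of the lemma sits in the exponential-family variance identity used to evaluate $f''$, from which the three corollaries (\cref{lem:concavity-of-J}, \cref{lem:Linf-smooth}, \cref{lem:L2nu-smooth}) follow by respectively using $\mathbb{V}_{\rho_t(\cdot;x)}[h] \ge 0$, the trivial bound $\mathbb{V}_{\rho_t(\cdot;x)}[h] \le \|h\|_{L^\infty(\calY)}^2$, and Cauchy--Schwarz together with a change of measure from $\rho_t(\cdot;x)$ to $\nu$ that produces the factor $\lambda(B)$.
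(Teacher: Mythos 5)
Your second-derivative route is a genuine alternative to the paper's. The paper first establishes a gradient-monotonicity identity
\(\langle\delta J(\phi)-\delta J(\overline{\phi}),\,\phi-\overline{\phi}\rangle = -\int_0^1\bbE_{\mu}\big[\bbV_{\rho_t(\cdot;x)}[\overline{\phi}-\phi]\big]\,\rmd t\)
(by writing \(\rho_1-\rho_0=\int_0^1\dot\rho_t\,\rmd t\) and using the same score identity \(\dot\rho_t=\rho_t(h-\bbE_{\rho_t}[h])\) that you invoke), and then integrates this a second time along the segment from \(\phi\) to \(\overline{\phi}\), producing a nested pair of interpolations. Your plan collapses those two passes into a single computation of \(f''(t)=-\bbE_{\mu}[\bbV_{\rho_t(\cdot;x)}[h]]\) followed by Taylor's theorem, which is cleaner and avoids the change-of-variable bookkeeping that the nested interpolation forces.

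There is, however, one point you should not wave away. Taylor's theorem with integral remainder gives
\begin{equation*}
J(\overline{\phi}) - J(\phi) - \langle\delta J(\phi),\overline{\phi}-\phi\rangle
= \int_0^1(1-t)\,f''(t)\,\rmd t
= -\int_0^1(1-t)\,\bbE_{\mu}\big[\bbV_{\rho_t(\cdot;x)}[\overline{\phi}-\phi]\big]\,\rmd t,
\end{equation*}
with the weight \((1-t)\) inside the integral, \emph{not} the flat \(\tfrac{1}{2}\) prefactor appearing in the lemma. The two expressions coincide only when the conditional variance is constant in \(t\); a one-dimensional check (two-point \(\calY\), constant cost) confirms the \((1-t)\)-weighted version and rules out the stated one. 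This is not your error: the paper's own proof incurs exactly this slip in its last few lines, where the variance under the interpolated measure \(\rho_{ts}(\cdot;x)\) (tilt \(\tilde\phi_{ts}\)) is silently replaced by the variance under \(\rho_t(\cdot;x)\) before the factor of \(s\) is integrated out; keeping the correct subscript and applying Fubini yields the \((1-t)\) weight. You should state the \((1-t)\)-weighted identity. All three downstream corollaries survive unchanged, since each bounds \(\bbE_\mu[\bbV_{\rho_t(\cdot;x)}[h]]\) uniformly over \(t\in[0,1]\) and \(\int_0^1(1-t)\,\rmd t=\tfrac{1}{2}\) reproduces the same constants. Apart from this, your calculations of \(f'\) and \(f''\) are exactly right and match the paper's ingredients.
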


\subsubsection{Proof of \cref{lem:concavity-of-J}}
\begin{proof}
From \cref{lem:broader-lem} and the non-negativity of the variance, we have for any \(\phi, \overline{\phi} \in L^{1}(\nu)\) that
\begin{equation*}
    J(\overline{\phi}) - J(\phi) - \langle \delta J(\phi), \overline{\phi} - \phi\rangle \leq 0~.
\end{equation*}
This completes the proof.
\end{proof}

\subsubsection{Proof of \cref{lem:Linf-smooth}}
\begin{proof}
By definition of the variance
\begin{equation*}
    \bbV_{\rho_{t}(.; x)}[\overline{\phi} - \phi] \leq \int_{\calY} (\overline{\phi}(y) - \phi(y))^{2}~\rho_{t}(y; x)\rmd y \leq \|\overline{\phi} - \phi\|_{L^{\infty}(\calY)}^{2}~.
\end{equation*}
The final inequality is by H\"{o}lder's inequality.
Substituting this in the result of \cref{lem:broader-lem}, we get
\begin{equation*}
    J(\overline{\phi}) - J(\phi) - \langle \delta J(\phi), \overline{\phi} - \phi\rangle \geq -\frac{\|\overline{\phi} - \phi\|^{2}_{L^{\infty}(\calY)}}{2}~.
\end{equation*}
\end{proof}

\subsubsection{Proof of \cref{lem:L2nu-smooth}}
\begin{proof}
From the convexity of \(\calS_{B}\), note that \(\tilde{\phi}_{t} = \phi + t \cdot (\overline{\phi} - \phi) \in \calS_{B}\).
Since \(\calS_{B} \subset L^{1}(\nu)\), we have by \cref{lem:broader-lem} that
\begin{align*}
    J(\overline{\phi}) - J(\phi) - \langle \delta J(\phi), \overline{\phi} - \phi\rangle &= -\frac{1}{2} \int_{0}^{1} \bbE_{x \sim \mu}\left[\bbV_{\rho_{t}(.; x)}[\overline{\phi} - \phi]\right]~\rmd t \\
    &\geq -\frac{1}{2}\int_{0}^{1} \left\{\int_{\calX}\int_{\calY}(\overline{\phi}(y) - \phi(y))^{2} \rho_{t}(y; x) \mu(x)~ \rmd x \rmd y\right\} ~\rmd t~.
\end{align*}
By Fubini's theorem, we can first compute \(\int_{\calX} \rho_{t}(y; x) \mu(x)\rmd x\) and then integrate w.r.t. \(\calY\).
\begin{align*}
    \int_{\calX}\rho_{t}(y; x)\mu(x)\rmd x &= \int_{\calX} \frac{\exp\left(\tilde{\phi}_{t}(y) - \frac{c(x, y)}{\varepsilon}\right) \nu(y)}{\int_{\calY} \exp\left(\tilde{\phi}_{t}(y') - \frac{c(x, y')}{\varepsilon}\right)\nu(y') \rmd y'} \mu(x)\rmd x \\
    &= \bbE_{x \sim \mu}\left[\exp\left(\tilde{\phi}_{t}(y) -\frac{c(x, y)}{\varepsilon}\right) \cdot \bbE_{y' \sim \nu}\left[\exp\left(\tilde{\phi}_{t}(y')-\frac{c(x, y')}{\varepsilon}\right)\right]^{-1}\right] \cdot \nu(y) \\
    &\overset{(a)}\leq \bbE_{(x, y') \sim \mu \otimes \nu}\left[\exp\left(\frac{c(x, y') - c(x, y)}{\varepsilon} + \tilde{\phi}_{t}(y) - \tilde{\phi}_{t}(y')\right)\right] \cdot \nu(y) \\
    &\overset{(b)}\leq \underbrace{e^{2B} \cdot \bbE_{(x, y') \sim \mu \otimes \nu}\left[\exp\left(\frac{c(x, y')}{\varepsilon}\right)\right]}_{\lambda(B)} \cdot~\nu(y)~.
\end{align*}
Step \((a)\) uses Jensen's inequality, and step \((b)\) uses the fact that for \(y, y'\), \(\tilde{\phi}_{t}(y) - \tilde{\phi}_{t}(y') \leq 2B\) for \(y, y'\) almost everywhere.
Substituting this in the result of \cref{lem:broader-lem}, we have
\begin{align*}
    J(\overline{\phi}) - J(\phi) - \langle \delta J(\phi), \overline{\phi} - \phi\rangle &\geq -\frac{1}{2}\int_{0}^{1} \left\{\int_{\calY} (\overline{\phi}(y) - \phi(y))^{2} \cdot \lambda(B) \cdot \nu(y)\rmd y\right\}~\rmd t \\
    &=-\frac{\lambda(B) \cdot \|\overline{\phi} - \phi\|_{L^{2}(\nu)}^{2}}{2}~.
\end{align*}
\end{proof}

\subsubsection{Proof of \cref{lem:broader-lem}}
\label{sec:prf:intermediate-semi-dual}

\begin{proof}
Recall that the first variation of the semi-dual \(J\) is
\begin{equation*}
    \delta J(\phi)(y) = \nu(y) - \pi(\phi, \phi^{+})_{\calY}(y) = \int_{\calX} \nu(y)\mu(x)\rmd x -  \int_{\calX} \frac{\exp\left(\phi(y) - \frac{c(x, y)}{\varepsilon}\right)\nu(y)}{\int_{\calY} \exp\left(\phi(y') - \frac{c(x, y')}{\varepsilon}\right) \nu(y')\rmd y'} \mu(x)\rmd x~.
\end{equation*}
For a fixed \(x \in \calX\), consider the function
\begin{equation*}
    j_{x}(\phi)(y) := \nu(y) -  \frac{\exp\left(\phi(y) - \frac{c(x, y)}{\varepsilon}\right)\nu(y)}{\int_{\calY} \exp\left(\phi(y') - \frac{c(x, y')}{\varepsilon}\right) \nu(y')\rmd y'}~.
\end{equation*}
and hence for any \(\phi, \overline{\phi} \in \calS_{B}\), we have
\begin{equation*}
    j_{x}(\phi)(y) - j_{x}(\overline{\phi})(y) = \frac{\exp\left(\overline{\phi}(y) - \frac{c(x, y)}{\varepsilon}\right)\nu(y)}{\int_{\calY} \exp\left(\overline{\phi}(y') - \frac{c(x, y')}{\varepsilon}\right) \nu(y')\rmd y'} - \frac{\exp\left(\phi(y) - \frac{c(x, y)}{\varepsilon}\right)\nu(y)}{\int_{\calY} \exp\left(\phi(y') - \frac{c(x, y')}{\varepsilon}\right) \nu(y')\rmd y'}
\end{equation*}
Note that \(j_{x}(\phi) - j_{x}(\overline{\phi}) = \rho_{1}(.; x) - \rho_{0}(.; x) = \int_{0}^{1}\dot{\rho_{t}}(.; x) \rmd t\).
We have by direct calculation that
\begin{equation*}
    \frac{\rmd}{\rmd t}\rho_{t}(y; x) \equiv \dot{\rho_{t}}(y; x) = \left\{(\overline{\phi}(y) - \phi(y))- \int_{\calY} (\overline{\phi}(y') - \phi(y')) \rho_{t}(y'; x) \rmd y'\right\}~\rho_{t}(y; x)\rmd y~.
\end{equation*}
Consequently,
\begin{align*}
    \langle j_{x}(\phi) - j_{x}(\overline{\phi}), \phi - \overline{\phi}\rangle &= \int_{\calY} (\phi(y) - \overline{\phi}(y)) \cdot (\rho_{1}(y; x) - \rho_{0}(y; x))~\rmd y \\
    &= \int_{\calY}\int_{0}^{1} (\phi(y) - \overline{\phi}(y)) \cdot \dot{\rho_{t}}(y; x) ~\rmd t\rmd y \\
    &= -\int_{0}^{1} \int_{\calY} (\overline{\phi}(y) - \phi(y))^{2} ~\rho_{t}(y; x) ~\rmd y\rmd t \\
    &\qquad +\int_{0}^{1}\left\{\int_{\calY}(\overline{\phi}(y) - \phi(y)) \cdot \rho_{t}(y; x) \rmd y\right\}^{2}\rmd t\\
    &= -\int_{0}^{1} \bbV_{\rho_{t}(.; x)}[\overline{\phi} - \phi] ~\rmd t~.\numberthis\label{eqn:temp_bound}
\end{align*}
Taking the expectation w.r.t. \(\mu\) on both sides and by Fubini's theorem, we have
\begin{equation}
    \langle \delta J(\phi) - \delta J(\overline{\phi}), \phi - \overline{\phi}\rangle = -\int_{0}^{1} \bbE_{x \sim \mu}\left[\bbV_{\rho_{t}(.; x)}[\overline{\phi} - \phi]\right]~\rmd t~.
\end{equation}
Define \(\tilde{J}_{t} = J(\tilde{\phi}_{t}) - \langle \delta J(\phi), \tilde{\phi}_{t}\rangle\).
By the chain rule,
\begin{equation*}
    \dot{\tilde{J}}_{t} = \langle \delta J(\tilde{\phi}_{t}), \overline{\phi} - \phi\rangle - \langle \delta J(\phi), \overline{\phi} - \phi\rangle = \langle \delta J(\tilde{\phi}_{t}) - \delta J(\phi), \overline{\phi} - \phi\rangle~.
\end{equation*}
By the fundamental theorem of calculus,
\begin{align*}
    J(\overline{\phi}) - J(\phi) - \langle \delta J(\phi), \overline{\phi} - \phi\rangle &= J_{1} - J_{0} \\
    &= \int_{0}^{1} \dot{\tilde{J}}_{s}~\rmd s \\
    &= \int_{0}^{1} \langle \delta J(\tilde{\phi}_{s}) - \delta J(\phi), \overline{\phi} - \phi\rangle~\rmd s \\
    &= \int_{0}^{1} \frac{1}{s} \cdot \langle \delta J(\tilde{\phi}_{s}) - \delta J(\phi), \tilde{\phi}_{s} - \phi\rangle~\rmd s \\
    &= -\int_{0}^{1} \frac{1}{s} \cdot \int_{0}^{1} \bbE_{x \sim \mu}\left[\bbV_{\rho_{t}(.; x)}[\tilde{\phi}_{s} - \phi]\right] \rmd t ~\rmd s \\
    &= -\int_{0}^{1}\int_{0}^{1} s \cdot \bbE_{x \sim \mu}\left[\bbV_{\rho_{t}(.; x)}[\overline{\phi} - \phi]\right] \rmd t ~\rmd s \\
    &= -\frac{1}{2} \int_{0}^{1} \bbE_{x \sim \mu}\left[\bbV_{\rho_{t}(.; x)}[\overline{\phi} - \phi]\right]~\rmd t~.
\end{align*}
\end{proof}

\subsection{Proofs for the statements in \cref{sec:measure_interpretation}}
\subsubsection{Proof of \cref{lem:KL_primal_projection}}
\label{sec:prf:KL_primal_projection}
\begin{proof}
Consider some \(n \geq 0\).
By the decomposition of KL divergence,
\begin{equation*}
    \KLdist(\pi \| \pi^{n}) = \bbE_{y \sim \pi_{\calY}}[\KLdist(\pi_{\calX | \calY}(. | y) \| \pi^{n}_{\calX | \calY}(. | y))] + \KLdist(\pi_{\calY} \| \pi^{n}_{\calY})~.
\end{equation*}
For convenience, we denote \(\projY(\pi^{n}; \Phi)\) as \(\pi^{n + \nicefrac{1}{2}}\).
By definition of \(\projY(\pi^{n}; \Phi)\), we have
\begin{equation*}
    \pi^{n + \nicefrac{1}{2}}_{\calY}(y) = \frac{1}{Z} \cdot \pi^{n}_{\calY}(y) \cdot \frac{\Phi(\nu)(y)}{\Phi(\pi^{n}_{\calY})(y)}; \quad \pi_{\calX | \calY}^{n + \nicefrac{1}{2}}(x | y) = \pi^{n}_{\calX | \calY}(x | y)~.
\end{equation*}
Above, \(Z = \bbE_{\sfy \sim \pi^{n}_{\calY}}\left[\frac{\Phi(\nu)(\sfy)}{\Phi(\pi^{n}_{\calY})(\sfy)}\right]\).
Therefore,
\begin{equation*}
    \pi^{n + \nicefrac{1}{2}}(x, y) = \frac{1}{Z} \cdot \pi^{n}(x, y) \cdot \frac{\Phi(\nu)(y)}{\Phi(\pi_{\calY}^{n})(y)}~.
\end{equation*}
Since \(\pi^{n} = \pi(\phi^{n}, (\phi^{n})^{+})\), this shows that \(\pi^{n + \nicefrac{1}{2}}\) factorises as
\begin{equation*}
    \pi^{n + \nicefrac{1}{2}}(x, y) = \exp\left(-\psi^{n + \nicefrac{1}{2}}(x) + \phi^{n + \nicefrac{1}{2}}(y) - \frac{c(x, y)}{\varepsilon}\right)\mu(x)\nu(y)~
\end{equation*}
where
\(\phi^{n + \nicefrac{1}{2}}(y) = \phi^{n}(y) + (\log \Phi(\nu)(y) - \log \Phi(\pi^{n}_{\calY})(y))\) and \(\psi^{n + \nicefrac{1}{2}}(x) = \psi^{n}(x) + \log Z\)~.
From \cite[Corr. B.1]{karimi2024sinkhorn}, we have that \(\projX(\pi^{n + \nicefrac{1}{2}}, \pi^{n}; \eta)\) satisfies
\begin{align*}
    \projX(\pi^{n + \nicefrac{1}{2}}, \pi^{n}; \eta)_{\calY | \calX}(y | x) &= \frac{\pi^{n + \nicefrac{1}{2}}_{\calY | \calX}(y | x)^{\eta} \cdot \pi^{n}_{\calY | \calX}(y | x)^{1 - \eta}}{C(x)} \\
    C(x) &= \int_{\calY} \pi^{n + \nicefrac{1}{2}}_{\calY | \calX}(y | x)^{\eta} \cdot \pi^{n}_{\calY | \calX}(y | x)^{1 - \eta} \rmd y~.
\end{align*}
The factorisations of \(\pi^{n}\) and \(\pi^{n + \nicefrac{1}{2}}\) results in \(\projX(\pi^{n + \nicefrac{1}{2}}, \pi^{n}; \eta)\) factorising as
\begin{equation*}
    \projX(\pi^{n + \nicefrac{1}{2}}, \pi^{n}; \eta)(x, y) = \exp\left(\bar{\phi}(y) - \bar{\psi}(x) - \frac{c(x, y)}{\varepsilon}\right)\mu(x)\nu(y)
\end{equation*}
and specifically,
\begin{align}
    \bar{\phi}(y) &= \eta \cdot \phi^{n + \nicefrac{1}{2}}(y) + (1 - \eta) \cdot \phi^{n}(y) \nonumber \\
    &= \phi^{n}(y) + \eta \cdot (\log \Phi(\nu)(y) - \log \Phi(\pi^{n}_{\calY})(y))~.\label{eqn:itermiediate}
\end{align}
Since \(\projX(\pi^{n + \nicefrac{1}{2}}, \pi^{n}; \eta)_{\calX} = \mu\), this implies
\begin{equation*}
    \bar{\psi}(x) = \log \int_{\calY} \exp\left(\bar{\phi}(y) - \frac{c(x, y)}{\varepsilon}\right)\nu(y)\rmd y = \bar{\phi}^{+}(x)~.
\end{equation*}
Hence, comparing \cref{eqn:itermiediate} with \ref{eqn:phi-match-update} we have \(\projX(\pi^{n + \nicefrac{1}{2}}, \pi^{n}; \eta) = \pi(\phi^{n + 1}; (\phi^{n + 1})^{+})\) which completes the proof.
\end{proof}

\subsubsection{Proof of \cref{lem:proximal-point}}
\label{sec:prf:proximal-point}
\begin{proof}
For convenience, we use the shorthand notation \(\tilde{\pi} = \projY(\pi; \Phi)\).
As in the proof of \cref{lem:KL_primal_projection}, \cref{eqn:measure_1} ensures that
\begin{gather*}
    \tilde{\pi}(x, y) = \frac{1}{Z} \cdot \pi(x, y) \cdot \frac{\Phi(\nu)(y)}{\Phi(\pi_{\calY})(y)}~; \qquad Z = \bbE_{\sfy \sim \pi_{\calY}}\left[\frac{\Phi(\nu)(\sfy)}{\Phi(\pi_{\calY})(\sfy)}\right] \\
    \Rightarrow \log \Phi(\pi_{\calY})(y) - \log \Phi(\nu)(y) = \log \frac{\pi(x, y)}{\tilde{\pi}(x, y)} - \log Z~.
\end{gather*}
The objective in \cref{eqn:proximal-point} with \(\calF \leftarrow \calV_{\Phi}\) can be simplified as
\begin{align*}
    \langle \calV_{\Phi}(\pi), \bar{\pi} - \pi\rangle &+ \frac{1}{\eta} \cdot \KLdist(\bar{\pi} \| \pi) 
    \\
    &\iint \calV_{\Phi}(\pi)(x, y)(\bar{\pi}(x, y) - \pi(x, y))\rmd x \rmd y \\
    &\qquad + \frac{1}{\eta} \cdot \iint \bar{\pi}(x, y) \log\left(\frac{\bar{\pi}(x, y)}{\pi(x, y)}\right)\rmd x \rmd y \\
    &= \iint \left(\log \Phi(\pi^{n}_{\calY})(y) - \log \Phi(\nu)(y)\right) \cdot \bar{\pi}(x, y) \rmd x \rmd y + \log Z \\
    &\quad + \frac{1}{\eta} \cdot \iint \bar{\pi}(x, y) \log\left(\frac{\bar{\pi}(x, y)}{\pi(x, y)}\right)\rmd x \rmd y \\
    &\qquad - \underbrace{\left(\log Z + \iint \left(\log \Phi(\pi_{\calY})(y) - \log \Phi(\nu)(y)\right) \cdot \pi(x, y) \rmd x \rmd y\right)}_{c(\pi)} \\
    &= \iint \bar{\pi}(x, y) \cdot \log\left(\frac{\pi(x, y)}{\tilde{\pi}(x, y)} \right) \rmd x \rmd y \\
    &\qquad + \frac{1}{\eta} \cdot \iint \bar{\pi}(x, y) \log\left(\frac{\bar{\pi}(x, y)}{\pi(x, y)}\right) \rmd x \rmd y + c(\pi) \\
    &= \frac{1}{\eta} \left\{ \iint \bar{\pi}(x, y) \cdot \log\left[\left(\frac{\bar{\pi}(x, y)}{\tilde{\pi}(x, y)}\right)^{\eta}\left(\frac{\bar{\pi}(x, y)}{\pi(x, y)}\right)^{1 - \eta}\right] \rmd x \rmd y\right\}\\
    &\qquad + c(\pi)~.
\end{align*}
The objective in \(\projX(\tilde{\pi}, \pi; \eta)\) can be expanded as
\begin{equation*}
    \eta \KLdist(\bar{\pi} \| \tilde{\pi}) + (1 - \eta) \KLdist(\bar{\pi} \| \pi) = \iint \bar{\pi}(x, y) \cdot \log\left[\left(\frac{\bar{\pi}(x, y)}{\tilde{\pi}(x, y)}\right)^{\eta}\left(\frac{\bar{\pi}(x, y)}{\pi(x, y)}\right)^{1 - \eta}\right] \rmd x \rmd y~
\end{equation*}
thus establishing the equivalence in the statement as \(\projX\) also minimises over the set \(\{\overline{\pi} : \overline{\pi}_{\calX} = \mu\}\) and \(c(\pi)\) is a constant.
\end{proof}

\subsubsection{Proof of \cref{thm:kernel-sga-update-rate}}
\label{sec:prf:kernel-sga-update-rate}
\begin{proof}

The proof is be obtained in the manner of the proof of \citet[Thm. 4]{aubin2022mirror} while catering to the squared MMD \(\calL_{k}\). We give the details here for completeness.

For an arbitrary \(n \geq 0\), we have the following identity for any \(\overline{\pi}\) such that \(\overline{\pi}_{\calX} = \mu\) that 
\begin{multline}
\label{eqn:three-point}
    \eta \cdot \langle \calV_{\Phi_{k}}(\pi^{n}), \overline{\pi} - \pi^{n}\rangle + \KLdist(\overline{\pi} \| \pi^{n}) \\
    \geq \eta \cdot \langle \calV_{\Phi_{k}}(\pi^{n}), \pi^{n + 1} - \pi^{n}\rangle + \KLdist(\pi^{n + 1} \| \pi^{n}) + \KLdist(\overline{\pi} \| \pi^{n + 1})~.
\end{multline}
This is obtained by the three-point identity \cite[Lem. 3]{aubin2022mirror} with
\begin{equation*}
    C \leftarrow \{\pi \in \calP(\calX \times \calY) : \pi_{\calX} = \mu\}~, \quad \calG \leftarrow \eta \cdot \langle \calV_{\Phi_{k}}(\pi^{n}), \cdot - \pi^{n}\rangle~, \quad D_{\phi}(\cdot | \cdot)\leftarrow \KLdist(\cdot \| \cdot)~.
\end{equation*}
By the definition of \(\calV_{\Phi_{k}}(\pi^{n}) = \frakm_{k}(\pi^{n}_{\calY}) - \frakm_{k}(\nu) = \delta \calL_{k}(\pi^{n}_{\calY}; \nu)\), we have
\begin{align*}
    \langle\calV_{\Phi_{k}}(\pi^{n}), \pi^{n + 1} - \pi^{n}\rangle &= \int_{\calY}\int_{\calX} \delta\calL_{k}(\pi^{n}_{\calY}; \nu)(y) \cdot (\pi^{n+ 1}(x, y) - \pi^{n}(x, y)) ~\rmd x\rmd y \\
    &= \langle \delta \calL_{k}(\pi^{n}_{\calY}; \nu), \pi^{n + 1}_{\calY} - \pi^{n}_{\calY}\rangle~.
\end{align*}

From \cref{prop:aubin-smooth}, we know that that in this case
\begin{align*}
    \calL_{k}(\pi_{\calY}^{n + 1}; \nu) &\leq \calL_{k}(\pi_{\calY}^{n}; \nu) + \langle \delta \calL_{k}(\pi^{n}_{\calY}; \nu), \pi^{n + 1}_{\calY} - \pi^{n}_{\calY}\rangle + 2c_{k} \cdot \KLdist(\pi^{n + 1}_{\calY} \| \pi^{n}_{\calY}) \\
    &= \calL_{k}(\pi^{n}_{\calY}; \nu) + \langle \calV_{\Phi}(\pi^{n}), \pi^{n + 1} - \pi^{n}\rangle + 2c_{k} \cdot \KLdist(\pi^{n + 1}_{\calY} \| \pi^{n}_{\calY}) \\
    &\overset{(a)}\leq \calL_{k}(\pi_{\calY}^{n}; \nu) + \langle \calV_{\Phi}(\pi^{n}), \pi^{n + 1} - \pi^{n}\rangle + 2c_{k} \cdot \KLdist(\pi^{n + 1} \| \pi^{n}) \numberthis\label{eqn:aubin-smooth-consequence}\\
    &\overset{(b)}\leq \calL_{k}(\pi_{\calY}^{n}; \nu) + \left(2c_{k} - \frac{1}{\eta}\right) \cdot \KLdist(\pi^{n + 1} \| \pi^{n}) - \frac{1}{\eta} \cdot \KLdist(\pi^{n} \| \pi^{n + 1}) \\
    &\overset{(c)}\leq \calL_{k}(\pi_{\calY}^{n}; \nu)\numberthis\label{eqn:MMD-decrease}~.
\end{align*}
Step \((a)\) is a consequence of the KL decomposition, step \((b)\) applies \cref{eqn:three-point} for \(\overline{\pi} \leftarrow \pi^{n}\), and step \((c)\) uses the fact that \(\eta = \min\left\{\frac{1}{2c_{k}}, 1\right\} \leq \frac{1}{2c_{k}}\) and the non-negativity of the KL divergence.
We also have by \cref{prop:aubin-smooth} that
\begin{align*}
    \calL_{k}(\overline{\pi}_{\calY}; \nu) -\calL_{k}(\pi_{\calY}^{n}; \nu) &\geq \langle \delta \calL_{k}(\pi^{n}_{\calY}; \nu), \overline{\pi}_{\calY} - \pi^{n}_{\calY}\rangle \\
    & = \langle \calV_{\Phi}(\pi^{n}), \overline{\pi} - \pi^{n}\rangle~.
\end{align*}
Substituting the above and \cref{eqn:aubin-smooth-consequence} in \cref{eqn:three-point} with \(\eta = \min\left\{\frac{1}{2c_{k}}, 1\right\}\), we obtain
\begin{equation*}
    \frac{1}{\max\{2c_{k}, 1\}}\calL_{k}(\pi^{n + 1}_{\calY}; \nu) - \frac{1}{\max\{2c_{k}, 1\}}\calL_{k}(\overline{\pi}_{\calY}; \nu) \leq \KLdist(\overline{\pi} \| \pi^{n}) - \KLdist(\overline{\pi} \| \pi^{n + 1})~.
\end{equation*}
Summing both sides from \(n = 0\) to \(n = N - 1\) yields
\begin{equation*}
    \frac{1}{\max\{2c_{k}, 1\}}\sum_{n = 1}^{N}\{\calL_{k}(\pi_{\calY}^{n}; \nu) - \calL_{k}(\overline{\pi}_{\calY}; \nu)\} \leq \KLdist(\overline{\pi} \| \pi^{0}) - \KLdist(\overline{\pi} \| \pi^{N})~.
\end{equation*}
We know that \(\calL_{k}(\pi^{n + 1}_{\calY}; \nu) \leq \calL_{k}(\pi^{n}_{\calY}; \nu)\) from \cref{eqn:MMD-decrease}.
Noting that \(\pi^{\star}\) satisfies \(\pi^{\star}_{\calX} = \mu\) and \(\pi^{\star}_{\calY} = \nu\), we substitute \(\overline{\pi} \leftarrow \pi^{\star}\) above and this leads to
\begin{equation*}
    \calL_{k}(\pi^{N}_{\calY};\nu) \leq \frac{\max\{2c_{k}, 1\}}{N} \cdot \KLdist(\pi^{\star} \| \pi^{0})~.
\end{equation*}
\end{proof}

\subsubsection{Proof of \cref{lem:md_interpret}}
\label{sec:prf:md_interpret}
\begin{proof}
We begin by writing the definition of \(\delta \varphi(\pi^{n})\)
\begin{equation*}
    \delta \varphi(\pi^{n}) = \log \frac{\pi^{n}}{\pi^{\mathrm{ref}}_{\varepsilon}} = (-(\phi^{n})^{+}) \oplus \phi^{n}~.
\end{equation*}
By definition of \(\calV_{\Phi}\) and \ref{eqn:phi-match-update},
\begin{equation*}
    \delta \varphi(\pi^{n}) - \eta \cdot \calV_{\Phi}(\pi^{n}) = (-(\phi^{n})^{+}) \oplus (\phi^{n} - \eta \cdot (\log \Phi(\pi^{n}_{\calY}) - \log \Phi(\nu))) = (-(\phi^{n})^{+}) \oplus \phi^{n + 1}~.
\end{equation*}
By direct calculation, we see that \(\delta \varphi^{\star}(f \oplus g) \in \calQ\):
\begin{align*}
    \delta \varphi^{\star}(f \oplus g)(x, y) &= \exp\left(g(y) - \frac{c(x, y)}{\varepsilon}\right) \mu(x)\nu(y) \cdot \left(\int_{\calY} \exp\left(g(y) - \frac{c(x, y)}{\varepsilon}\right)\nu(y)\rmd y\right)^{-1} \\
    &= \exp\left(g(y) - g^{+}(x) - \frac{c(x, y)}{\varepsilon}\right)\mu(x)\nu(y) \\
    &= \pi(g, g^{+})~.
\end{align*}
Applying the mapping $\delta \varphi^{*}$  to \(\delta \varphi(\pi^{n}) - \eta \cdot \calV_{\Phi}(\pi^{n})\) and the above identity, we have
\begin{equation*}
    \delta \varphi^{*}(\delta \varphi(\pi^{n}) - \eta \cdot \calV_{\Phi}(\pi^{n})) = \delta \varphi^{*}((-(\phi^{n})^{+}) \oplus \phi^{n + 1}) = \delta \varphi^{*}(0 \oplus \phi^{n + 1}) = \pi(\phi^{n+ 1}, (\phi^{n + 1})^{+}) = \pi^{n + 1}~.
\end{equation*}
showing the equivalence to \ref{eqn:phi-match-update}.
\end{proof}

\subsection{Proofs of theorems in \cref{sec:extensions}}

\subsubsection{Proof of \cref{thm:sign-sga-conv-rate}}
\label{app:sec:prf:sign-sga-conv-rate}

\begin{proof}
From \cref{lem:Linf-smooth}, we have for any \(\phi, \overline{\phi} \in L^{1}(\nu) \cap L^{\infty}(\calY)\) that
\begin{align*}
    J(\overline{\phi}) - J(\phi) - \langle \delta J(\phi), \overline{\phi} - \phi\rangle &= \int_{0}^{1} \langle \delta J(\phi + t (\overline{\phi} - \phi)) - \delta J(\phi), \overline{\phi} - \phi\rangle \rmd t \\
    &\geq -\frac{\|\overline{\phi} - \phi\|^{2}_{\infty}}{2}~.
\end{align*}

For any \(n \geq 0\), substituting \(\phi \leftarrow \phi^{n}\) and \(\overline{\phi} \leftarrow \phi^{n + \nicefrac{1}{2}} = \sfM^{\textsf{sign-SGA}}(\phi^{n}; 1)\), we get
\begin{align*}
    J(\phi^{n + \nicefrac{1}{2}}) &\geq J(\phi^{n}) + \langle \delta J(\phi^{n}), \phi^{n + \nicefrac{1}{2}} - \phi^{n}\rangle - \frac{\|\phi^{n + \nicefrac{1}{2}} - \phi^{n}\|_{\infty}^{2}}{2} \\
    &\overset{(a)}= J(\phi^{n}) + \|\delta J(\phi^{n})\|_{L^{1}(\calY)} \cdot \|\delta J(\phi^{n})\|_{L^{1}(\calY)} - \frac{1}{2} \cdot \|\delta J(\phi^{n})\|^{2}_{L^{1}(\calY)}~\\
    &= J(\phi^{n}) + \frac{\|\delta J(\phi^{n})\|_{L^{1}(\calY)}^{2}}{2}~.\numberthis\label{eqn:one-step-descent-sign-sga}
\end{align*}
Step \((a)\) above is due to the fact that \(\langle \mathrm{sign}(\delta J(\phi^{n})), \delta J(\phi^{n})\rangle = \|\delta J(\phi^{n})\|_{1}\).
By the shift invariance of the semi-dual, \(J(\phi^{n + 1}) = J(\phi^{n + \nicefrac{1}{2}})\), which implies
\begin{equation*}
    J(\phi^{n + 1}) \geq J(\phi^{n}) + \frac{\|\delta J(\phi^{n})\|^{2}_{L^{1}(\calY)}}{2}~.
\end{equation*}
Hence \(\phi^{n + 1} \in \calT_{\phi^{0}, y_{\texttt{anc}}}\) as \(\phi^{n + 1}(y_{\texttt{anc}}) = \phi^{n}(y_{\texttt{anc}})\).
Next, by concavity of \(J\) (\cref{lem:concavity-of-J}) that
\begin{equation}
\label{eqn:concave-sign-sga}
    J(\widetilde{\phi}^{\star}) \leq J(\phi^{n}) + \langle \delta J(\phi^{n}), \widetilde{\phi}^{\star} - \phi^{n}\rangle~.
\end{equation}
Define the Lyapunov function \(E_{n} := \frac{n(n + 1)}{2} \cdot (J(\phi^{n}) - J(\widetilde{\phi}^{\star}))\).
We have
\begin{align*}
    E_{n + 1} - E_{n} &= \frac{(n + 2)(n + 1)}{2} \cdot (J(\phi^{n + 1}) - J(\phi^{n})) + (n + 1) \cdot (J(\phi^{n}) - J(\widetilde{\phi}^{\star})) \\
    &\overset{(a)}\geq (n + 1) \cdot \left\{\frac{n + 2}{4} \cdot \|\delta J(\phi^{n})\|_{1}^{2} + \langle \delta J(\phi^{n}), \phi^{n} - \widetilde{\phi}^{\star}\rangle\right\} \\
    &\overset{(b)}\geq -\frac{n + 1}{n + 2} \cdot \|\phi^{n} - \widetilde{\phi}^{\star}\|_{\infty}^{2} \\
    &\overset{(c)}\geq -\mathrm{diam}(\calT_{\phi^{0}, y_{\texttt{anc}}}; L^{\infty}(\calY))^{2}~.
\end{align*}
Above, step \((a)\) applies \cref{eqn:one-step-descent-sign-sga,eqn:concave-sign-sga}, and step \((b)\) applies the H\"{o}lder-Young inequality.
Finally, step \((c)\) uses the fact that \(\phi^{n}, \widetilde{\phi}^{\star} \in \calT_{\phi^{0}, y_{\texttt{anc}}}\) shown previously.
Summing the above inequality from \(n = 0\) to \(n = N -1\), we get
\begin{align*}
    E_{N} - E_{0} &\geq -N \cdot \mathrm{diam}(\calT_{\phi^{0}, y_{\texttt{anc}}}; L^{\infty}(\calY))^{2} \\
    \Rightarrow J(\phi^{N}) - J(\widetilde{\phi}^{\star}) &\geq - \frac{2 \cdot \mathrm{diam}(\calT_{\phi^{0}, y_{\texttt{anc}}}; L^{\infty}(\calY))^{2}}{N + 1}~.
\end{align*}
\end{proof}

\subsubsection{Proof of \cref{thm:proj-sga-conv-rate}}
\label{app:sec:prf:proj-sga-conv-rate}

Before we give the proof, we lay out some preliminaries and intermediate results that will come in handy to prove \cref{thm:acc-proj-sga-conv-rate} later.

The truncated quadratic approximation to \(J\) centered at \(\phi \in L^{2}(\nu)\) that \ref{eqn:proj-sga-update} is based on:
\begin{equation*}
    \widetilde{J}_{\eta, \calS_{B}}(\overline{\phi}; \phi) := J(\phi) + \left\langle \frac{\delta J(\phi)}{\nu}, \overline{\phi} - \phi \right\rangle_{L^{2}(\nu)} - \frac{1}{2\eta}\|\overline{\phi} - \phi\|_{L^{2}(\nu)}^{2} - \bbI_{\calS_{B}}(\overline{\phi})~.
\end{equation*}
Above, \(\bbI_{\calS_{B}}\) is the convex indicator for \(\calS_{B}\) which evaluates to \(0\) if \(\overline{\phi} \in \calS_{B}\) and \(\infty\) otherwise.
Note that
\begin{equation*}
    \widetilde{J}_{\eta, \calS_{B}}(\overline{\phi}; \phi) = J(\phi) + \frac{\eta}{2} \cdot \left\|\frac{\delta J(\phi)}{\nu}\right\|_{L^{2}(\nu)}^{2} - \frac{1}{2\eta} \cdot \left\|\overline{\phi} - \left(\phi + \eta \cdot \frac{\delta J(\phi)}{\nu}\right)\right\|_{L^{2}(\nu)}^{2} - \bbI_{\calS_{B}}(\overline{\phi})~.
\end{equation*}
As a result, we have the alternate characterisation of \(\sfM^{\textsf{proj-SGA}}_{\calS_{B}}\) as
\begin{equation*}
    \sfM^{\textsf{proj-SGA}}_{\calS_{B}}(\phi; \eta) = \argmax_{\overline{\phi} \in \calS_{B}}\; \widetilde{J}_{\eta, \calS_{B}}(\overline{\phi}; \phi)~.
\end{equation*}
We use \(\overline{J}_{\calS_{B}}\) to denote the composite function \(J + \bbI_{\calS_{B}}\).
Also recall that \(\calS_{B} = \{\phi \in L^{2}(\nu) : \|\phi\|_{L^{\infty}(\calY)} \leq B\}\).

\begin{lemma}
\label{lem:lower-bound-update}
Let \(\phi \in \calS_{\bar{B}}\).
Then, for \(\eta \leq \lambda(\max\{B, \bar{B}\})^{-1}\),
\begin{equation*}
    \overline{J}_{\calS_{B}}(\sfM^{\emph{\textsf{proj-SGA}}}_{\calS_{B}}(\phi; \eta)) \geq \widetilde{J}_{\eta, \calS_{B}}(\sfM^{\emph{\textsf{proj-SGA}}}_{\calS_{B}}(\phi; \eta); \phi)~.
\end{equation*}
\end{lemma}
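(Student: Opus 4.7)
The plan is to unwind the definitions and reduce the statement to a direct application of \cref{lem:L2nu-smooth}. Write $\phi^{+} := \sfM^{\textsf{proj-SGA}}_{\calS_{B}}(\phi; \eta)$ and $B' := \max\{B, \bar{B}\}$. Since $\phi^{+}$ is by definition the argmin of a projection problem onto $\calS_{B}$, we have $\phi^{+} \in \calS_{B}$, so $\bbI_{\calS_{B}}(\phi^{+}) = 0$ and $\overline{J}_{\calS_{B}}(\phi^{+}) = J(\phi^{+})$. The inequality we need is therefore
\begin{equation*}
    J(\phi^{+}) \geq J(\phi) + \left\langle \frac{\delta J(\phi)}{\nu}, \phi^{+} - \phi\right\rangle_{L^{2}(\nu)} - \frac{1}{2\eta} \|\phi^{+} - \phi\|_{L^{2}(\nu)}^{2}~.
\end{equation*}
A small bookkeeping observation simplifies the inner product: by the definition of the $L^{2}(\nu)$ pairing,
\begin{equation*}
    \left\langle \frac{\delta J(\phi)}{\nu}, \phi^{+} - \phi\right\rangle_{L^{2}(\nu)} = \int_{\calY} \frac{\delta J(\phi)(y)}{\nu(y)} \, (\phi^{+}(y) - \phi(y)) \, \nu(y) \, \rmd y = \langle \delta J(\phi), \phi^{+} - \phi\rangle~,
\end{equation*}
so it suffices to establish the desired inequality with the ordinary $L^{2}(\calY)$ pairing on the right-hand side.

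Next I would verify that \cref{lem:L2nu-smooth} applies with bound $B'$. By hypothesis $\phi \in \calS_{\bar{B}}$ gives $\|\phi\|_{L^{\infty}(\calY)} \leq \bar{B} \leq B'$, and $\phi^{+} \in \calS_{B}$ gives $\|\phi^{+}\|_{L^{\infty}(\calY)} \leq B \leq B'$, so both iterates lie in the ball of $L^{\infty}$-radius $B'$. Applying \cref{lem:L2nu-smooth} then yields
\begin{equation*}
    J(\phi^{+}) - J(\phi) - \langle \delta J(\phi), \phi^{+} - \phi\rangle \geq -\frac{\lambda(B')}{2} \|\phi^{+} - \phi\|_{L^{2}(\nu)}^{2}~.
\end{equation*}

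Finally, the hypothesis $\eta \leq \lambda(B')^{-1}$ is equivalent to $\lambda(B') \leq 1/\eta$, so the right-hand side above is bounded below by $-\tfrac{1}{2\eta}\|\phi^{+} - \phi\|_{L^{2}(\nu)}^{2}$. Rearranging and adding $\bbI_{\calS_{B}}(\phi^{+}) = 0$ to the left-hand side gives exactly $\overline{J}_{\calS_{B}}(\phi^{+}) \geq \widetilde{J}_{\eta, \calS_{B}}(\phi^{+}; \phi)$. There is no real obstacle here; the only subtlety is picking the correct smoothness radius $B'$ that dominates both $\phi$ and the projected iterate $\phi^{+}$ so that the non-uniform growth constant $\lambda(\cdot)$ from \cref{lem:L2nu-smooth} is controlled by $1/\eta$.
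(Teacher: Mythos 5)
Your proof is correct and follows exactly the paper's route: identify that the projected point lies in $\calS_{B}$ so the indicator term vanishes, note that both $\phi$ and its update lie in $\calS_{\max\{B,\bar B\}}$, invoke \cref{lem:L2nu-smooth} with that radius, and use $\eta \leq \lambda(\max\{B,\bar B\})^{-1}$ to dominate $\lambda$ by $1/\eta$. One cosmetic caution: you reuse the notation $\phi^{+}$ for $\sfM^{\textsf{proj-SGA}}_{\calS_{B}}(\phi;\eta)$, whereas the paper reserves $\phi^{+}$ for the log-sum-exp transform $\phi^{+}(x) = \log\int \exp(\phi(y) - c(x,y)/\varepsilon)\,\rmd\nu(y)$; a different symbol would avoid a clash.
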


\begin{lemma}
\label{lem:lem2.3-bt09-general}
Let \(\phi \in \calS_{\bar{B}}\).
For any \(\overline{\phi} \in L^{2}(\nu)\) and \(\eta \leq \lambda(\max\{B, \bar{B}\})^{-1}\), we have that
\begin{equation*}
    \overline{J}_{\calS_{B}}(\sfM^{\textsf{proj-SGA}}_{\calS_{B}}(\phi; \eta)) - \overline{J}_{\calS_{B}}(\overline{\phi})
    \geq \frac{1}{2\eta} \cdot \|\sfM^{\emph{\textsf{proj-SGA}}}_{\calS_{B}}(\phi; \eta) - \phi\|_{L^{2}(\nu)}^{2}
    + \frac{1}{\eta} \cdot \langle \sfM^{\emph{\textsf{proj-SGA}}}_{\calS_{B}}(\phi; \eta) - \phi, \phi - \overline{\phi}\rangle_{L^{2}(\nu)}~.
\end{equation*}
\end{lemma}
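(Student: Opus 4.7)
The plan is to mirror the classical Beck-Teboulle argument (Lemma 2.3 in their FISTA paper) in our infinite-dimensional, concave setting, combining three ingredients: the descent-like inequality of Lemma \ref{lem:lower-bound-update}, the concavity of \(J\) from Lemma \ref{lem:concavity-of-J}, and the first-order optimality condition characterising \(p := \sfM^{\textsf{proj-SGA}}_{\calS_{B}}(\phi; \eta)\) as the maximiser of the strongly concave quadratic model \(\widetilde{J}_{\eta, \calS_{B}}(\cdot; \phi)\) over \(\calS_{B}\). If \(\overline{\phi} \notin \calS_{B}\), the left-hand side of the claimed inequality is \(+\infty\) by the indicator convention, and there is nothing to prove, so we may assume \(\overline{\phi} \in \calS_{B}\) from here on.

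First, Lemma \ref{lem:lower-bound-update} (valid because \(\eta \leq \lambda(\max\{B, \bar{B}\})^{-1}\) and the segment \([\phi, p] \subset \calS_{\max\{B, \bar{B}\}}\) by convexity of the sublevel ball) supplies
\[
\overline{J}_{\calS_{B}}(p) \geq \widetilde{J}_{\eta, \calS_{B}}(p; \phi) = J(\phi) + \left\langle \tfrac{\delta J(\phi)}{\nu}, p - \phi\right\rangle_{L^{2}(\nu)} - \tfrac{1}{2\eta}\|p - \phi\|_{L^{2}(\nu)}^{2}.
\]
Concavity of \(J\) (Lemma \ref{lem:concavity-of-J}) then gives \(J(\phi) \geq J(\overline{\phi}) - \langle \delta J(\phi)/\nu, \overline{\phi} - \phi\rangle_{L^{2}(\nu)} = \overline{J}_{\calS_{B}}(\overline{\phi}) + \langle \delta J(\phi)/\nu, \phi - \overline{\phi}\rangle_{L^{2}(\nu)}\) since \(\overline{\phi} \in \calS_{B}\). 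Substituting and collecting the inner products yields the intermediate bound
\[
\overline{J}_{\calS_{B}}(p) - \overline{J}_{\calS_{B}}(\overline{\phi}) \geq \left\langle \tfrac{\delta J(\phi)}{\nu}, p - \overline{\phi}\right\rangle_{L^{2}(\nu)} - \tfrac{1}{2\eta}\|p - \phi\|_{L^{2}(\nu)}^{2}.
\]

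To dispose of the remaining inner product, I would invoke the variational inequality characterising \(p\) as the maximiser of \(\widetilde{J}_{\eta, \calS_{B}}(\cdot; \phi)\) on the convex set \(\calS_{B}\):
\[
\left\langle \tfrac{\delta J(\phi)}{\nu} - \tfrac{1}{\eta}(p - \phi),~ \overline{\phi} - p\right\rangle_{L^{2}(\nu)} \leq 0 \qquad \forall~ \overline{\phi} \in \calS_{B},
\]
which rearranges to \(\langle \delta J(\phi)/\nu,~ p - \overline{\phi}\rangle_{L^{2}(\nu)} \geq \tfrac{1}{\eta}\langle p - \phi,~ p - \overline{\phi}\rangle_{L^{2}(\nu)}\). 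Plugging this in and decomposing \(p - \overline{\phi} = (p - \phi) + (\phi - \overline{\phi})\) gives
\[
\overline{J}_{\calS_{B}}(p) - \overline{J}_{\calS_{B}}(\overline{\phi}) \geq \tfrac{1}{\eta}\|p - \phi\|_{L^{2}(\nu)}^{2} + \tfrac{1}{\eta}\langle p - \phi,~ \phi - \overline{\phi}\rangle_{L^{2}(\nu)} - \tfrac{1}{2\eta}\|p - \phi\|_{L^{2}(\nu)}^{2},
\]
and the two quadratic terms collapse to \(\tfrac{1}{2\eta}\|p - \phi\|_{L^{2}(\nu)}^{2}\), which is precisely the claimed inequality.

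The only conceptually delicate step is the variational inequality: because the feasible set \(\calS_{B} = \{\phi \in L^{2}(\nu): \|\phi\|_{L^{\infty}(\calY)} \leq B\}\) is only an \(L^{\infty}\)-ball rather than an \(L^{2}(\nu)\)-ball, one has to check that the first-order optimality condition still takes the clean Hilbert-space form above. This follows because \(\widetilde{J}_{\eta, \calS_{B}}(\cdot; \phi)\) is strongly concave in the \(L^{2}(\nu)\) geometry with Fréchet derivative \(\delta J(\phi)/\nu - (p - \phi)/\eta\) at \(p\), and the constraint set \(\calS_{B}\) is convex; no further regularity of \(\calS_{B}\) is needed for the one-sided directional inequality. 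Everything else is routine manipulation of inner products.
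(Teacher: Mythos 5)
Your proof is correct and follows essentially the same route as the paper's: both mirror \citet[Lem.\ 2.3]{beck2009fast} by combining \cref{lem:lower-bound-update}, concavity of \(J\), and first-order optimality of the projected step. The only cosmetic difference is that you phrase optimality as the variational inequality \(\langle \delta J(\phi)/\nu - (p-\phi)/\eta,\ \overline{\phi} - p\rangle_{L^2(\nu)} \leq 0\), whereas the paper writes the same condition via a subgradient \(\gamma(\phi) \in \partial \bbI_{\calS_B}(p)\) of the indicator and then invokes convexity of \(\bbI_{\calS_B}\) -- these are equivalent formulations of the same normal-cone condition, so the arguments coincide.
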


\begin{proof}[Proof of \cref{thm:proj-sga-conv-rate}]
For \(\phi^{0} \in \calS_{B}\), each step according to \ref{eqn:proj-sga-update} ensures that \(\phi^{n} \in \calS_{B}\) for all \(n \geq 1\).
For \(\eta \leq \frac{1}{\lambda(B)}\), we have from \cref{lem:lem2.3-bt09-general} applied to \(\overline{\phi} \leftarrow \widetilde{\phi}^{\star}\) and \(\phi \leftarrow \phi^{n}\) for an arbitrary \(n \geq 0\) that
\begin{align*}
    \overline{J}_{\calS_{B}}(\phi^{n + 1}) - \overline{J}_{\calS_{B}}(\widetilde{\phi}^{\star}) &\geq \frac{1}{2\eta} \cdot \|\phi^{n + 1} - \phi^{n}\|_{L^{2}(\nu)}^{2} + \frac{1}{\eta} \cdot \langle \phi^{n + 1} - \phi^{n}, \phi^{n} - \widetilde{\phi}^{\star}\rangle_{L^{2}(\nu)}~\\
    &= \frac{1}{2\eta} \cdot \|\phi^{n + 1} - \widetilde{\phi}^{\star}\|_{L^{2}(\nu)}^{2} - \frac{1}{2\eta} \cdot \|\phi^{n} - \widetilde{\phi}^{\star}\|_{L^{2}(\nu)}^{2}~.
\end{align*}
Summing both sides from \(n = 0\) to \(n = N - 1\) for \(N \geq 1\) we get
\begin{equation*}
    \sum_{n = 0}^{N - 1}(\overline{J}_{\calS_{B}}(\phi^{n + 1}) - \overline{J}_{\calS_{B}}(\widetilde{\phi}^{\star})) \geq \frac{1}{2\eta} \cdot \|\phi^{N} - \widetilde{\phi}^{\star}\|^{2}_{L^{2}(\nu)} - \frac{1}{2\eta} \cdot \|\phi^{0} - \widetilde{\phi}^{\star}\|_{L^{2}(\nu)}^{2}~.
\end{equation*}
Additionally from \cref{lem:lower-bound-update}, we have for the choice of \(\eta\),
\begin{equation*}
    \overline{J}_{\calS_{B}}(\phi^{n + 1}) \geq \widetilde{J}_{\eta, \calS_{B}}(\phi^{n + 1}; \phi^{n}) \geq \widetilde{J}_{\eta, \calS_{B}}(\phi^{n}; \phi^{n}) = \overline{J}_{\calS_{B}}(\phi^{n})~.
\end{equation*}
Hence,
\begin{equation*}
    N \cdot (\overline{J}_{\calS_{B}}(\phi^{N}) - \overline{J}_{\calS_{B}}(\widetilde{\phi}^{\star})) \geq -\frac{1}{2\eta} \cdot \|\phi^{0} - \widetilde{\phi}^{\star}\|_{L^{2}(\nu)}^{2}~.
\end{equation*}
Since \(\phi^{n} \in \calS_{B}\) for all \(n \geq 0\), \(\overline{J}_{\calS_{B}}(\phi^{n}) = J(\phi^{n})\).
\end{proof}

\subsubsection{Proof of \cref{thm:acc-proj-sga-conv-rate}}
\label{app:sec:prf:acc-proj-sga-conv-rate}

Prior to stating the proof for \cref{thm:acc-proj-sga-conv-rate}, we first make the following observations about the sequence \(\{\overline{\phi^{n}}\}_{n \geq 0}\) and \(\{t_{n}\}_{n \geq 1}\) generated by \ref{eqn:acc-proj-sga-update}.
These are: 
\begin{itemize}[leftmargin=*]
    \item for every \(n \geq 0\), \(\overline{\phi}^{n} \in \calS_{B}\), and
    \item for every \(n \geq 1\), \(\frac{t_{n} - 1}{t_{n + 1}} \in (0, 1)\) (\cref{lem:tn-recursion}).
\end{itemize}

A key step towards the proof of \cref{thm:acc-proj-sga-conv-rate} is the following lemma, analogous to \cite[Lem. 4.1]{beck2009fast}.
\begin{lemma}
\label{lem:key-identity}
Let \(\{\overline{\phi}^{n}\}_{n \geq 1}\) be obtained from \ref{eqn:acc-proj-sga-update}.
Define \(v_{n} = \overline{J}(\phi^{\star}) - \overline{J}(\overline{\phi}^{n})\) and \(u_{n} = t_{n} \cdot \overline{\phi}^{n} - (t_{n} - 1) \cdot \overline{\phi}^{n - 1} - \widetilde{\phi}^{\star}\).
Then,
\begin{equation*}
    \frac{2}{\lambda(3B)} \cdot (t_{n}^{2}v_{n} - t_{n + 1}^{2}v_{n + 1}) \geq \|u_{n + 1}\|_{L^{2}(\nu)}^{2} - \|u_{n}\|_{L^{2}(\nu)}^{2}~.
\end{equation*}
\end{lemma}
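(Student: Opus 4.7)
The plan is to follow the template of Beck–Teboulle's FISTA analysis \citep{beck2009fast}, adapted to the non-uniform smoothness lemma \cref{lem:L2nu-smooth} which forces the regularity parameter to depend on the $L^\infty(\calY)$-radius of the domain containing all relevant iterates. The first preparatory observation is that although $\overline{\phi}^n, \overline{\phi}^{n-1} \in \calS_B$ by the projection step in \ref{eqn:acc-proj-sga-update}, the extrapolated point $\phi^{n+1} = \overline{\phi}^{n} + \tfrac{t_n-1}{t_{n+1}}(\overline{\phi}^n - \overline{\phi}^{n-1})$ need \emph{not} lie in $\calS_B$. However, since $\tfrac{t_n-1}{t_{n+1}} \in (0,1)$ (cf.\ the recursion $t_{n+1} = \tfrac{1+\sqrt{1+4t_n^2}}{2}$), a triangle inequality in $L^\infty(\calY)$ yields $\|\phi^{n+1}\|_{L^\infty(\calY)} \leq B + 2B = 3B$, i.e.\ $\phi^{n+1} \in \calS_{3B}$. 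This is exactly why the constant $\lambda(3B)$ appears in the statement.

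Given this, I would apply \cref{lem:lem2.3-bt09-general} twice with $\phi \leftarrow \phi^{n+1}$, $\bar B \leftarrow 3B$, and $\eta = \lambda(3B)^{-1}$, so that $\overline{\phi}^{n+1} = \sfM^{\textsf{proj-SGA}}_{\calS_B}(\phi^{n+1};\lambda(3B)^{-1})$. Taking $\overline{\phi} \leftarrow \overline{\phi}^{n}$ gives, with $L := \lambda(3B)$,
\begin{equation*}
    v_n - v_{n+1} \;\geq\; \tfrac{L}{2}\|\overline{\phi}^{n+1} - \phi^{n+1}\|_{L^2(\nu)}^2 + L\, \langle \overline{\phi}^{n+1} - \phi^{n+1},\, \phi^{n+1} - \overline{\phi}^n\rangle_{L^2(\nu)}~,
\end{equation*}
and taking $\overline{\phi} \leftarrow \widetilde{\phi}^\star$ gives
\begin{equation*}
    -v_{n+1} \;\geq\; \tfrac{L}{2}\|\overline{\phi}^{n+1} - \phi^{n+1}\|_{L^2(\nu)}^2 + L\, \langle \overline{\phi}^{n+1} - \phi^{n+1},\, \phi^{n+1} - \widetilde{\phi}^\star\rangle_{L^2(\nu)}~.
\end{equation*}
Then I would form $(t_{n+1}-1)\cdot(\text{first}) + (\text{second})$, multiply the result through by $t_{n+1}$, and use the algebraic identity $t_n^2 = t_{n+1}(t_{n+1}-1)$ built into the FISTA update of $\{t_n\}$ (this is exactly what motivates that recursion) to rewrite the left-hand side as $t_n^2 v_n - t_{n+1}^2 v_{n+1}$.

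The remaining and central step is the algebraic identification of the right-hand side with $\tfrac{L}{2}(\|u_{n+1}\|^2 - \|u_n\|^2)$. Setting $a := t_{n+1}(\overline{\phi}^{n+1} - \phi^{n+1})$ and $b := t_{n+1}\phi^{n+1} - (t_{n+1}-1)\overline{\phi}^n - \widetilde{\phi}^\star$, the sum of the two terms becomes $\tfrac{L}{2}\bigl(\|a\|_{L^2(\nu)}^2 + 2\langle a,b\rangle_{L^2(\nu)}\bigr) = \tfrac{L}{2}\bigl(\|a+b\|_{L^2(\nu)}^2 - \|b\|_{L^2(\nu)}^2\bigr)$. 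Direct inspection shows $a+b = u_{n+1}$, and substituting the extrapolation formula $t_{n+1}\phi^{n+1} = t_{n+1}\overline{\phi}^n + (t_n-1)(\overline{\phi}^n - \overline{\phi}^{n-1}) = (t_{n+1}+t_n-1)\overline{\phi}^n - (t_n-1)\overline{\phi}^{n-1}$ into $b$ collapses it to $t_n\overline{\phi}^n - (t_n-1)\overline{\phi}^{n-1} - \widetilde{\phi}^\star = u_n$. Dividing by $L/2$ then yields the claimed inequality.

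The main obstacle — and the only genuine departure from the FISTA proof — is the non-uniform smoothness: one must verify that the relevant point $\phi^{n+1}$ where \cref{lem:L2nu-smooth} is invoked stays in $\calS_{3B}$, and that the single common constant $\lambda(3B)$ suffices for both applications of \cref{lem:lem2.3-bt09-general}. Everything else (the two-inequality combination, the $t_n^2 = t_{n+1}(t_{n+1}-1)$ identity, and the completion-of-squares that converts $a$ and $b$ into $u_{n+1}$ and $u_n$) is a direct Hilbert-space translation of \cite[Lem.~4.1]{beck2009fast} to $L^2(\nu)$, enabled by the fact that $L^2(\nu)$ is itself a Hilbert space.
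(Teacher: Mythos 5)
Your proposal is correct and follows exactly the route of the paper's proof (triangle inequality gives $\phi^{n+1}\in\calS_{3B}$, apply \cref{lem:lem2.3-bt09-general} twice with $\bar B\leftarrow 3B$ and step size $\lambda(3B)^{-1}$ against $\overline{\phi}^n$ and against $\widetilde{\phi}^\star$, then reproduce the Beck--Teboulle algebra). The only difference is cosmetic: the paper stops at the two inequalities and cites \cite[Lem.~4.1]{beck2009fast} for the rest, whereas you spell out the $(t_{n+1}-1)\cdot(\text{first})+(\text{second})$ combination, the identity $t_n^2 = t_{n+1}(t_{n+1}-1)$, and the completion of squares via $a+b=u_{n+1}$ and $b=u_n$ — all of which check out.
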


\begin{proof}[Proof of \cref{thm:acc-proj-sga-conv-rate}]
Since \(\lambda(3B) \geq \lambda(B)\) and \(\phi^{1}, \overline{\phi}^{1} \in \calS_{B}\), \cref{lem:lem2.3-bt09-general} with \(\phi \leftarrow \phi^{1}, \overline{\phi} \leftarrow \widetilde{\phi}^{\star}\) gives
\begin{align*}
    \overline{J}(\overline{\phi}^{1}) - \overline{J}(\phi^{\star}) &\geq \frac{\lambda(3B)}{2} \cdot \|\overline{\phi}^{1} - \phi^{1}\|_{L^{2}(\nu)}^{2} + \lambda(3B) \cdot \langle \overline{\phi}^{1} - \phi^{1}, \phi^{1} - \widetilde{\phi}^{\star}\rangle_{L^{2}(\nu)} \\
    &= \frac{\lambda(3B)}{2} \cdot \left\{\|\overline{\phi}^{1} - \phi^{\star}\|_{L^{2}(\nu)}^{2} - \|\phi^{1} - \phi^{\star}\|_{L^{2}(\nu)}^{2}\right\}~.
\end{align*}
In the notation of \cref{lem:key-identity},
\begin{equation}
\label{eqn:first-iterate}
    -v_{1} \geq \frac{\lambda(3B)}{2} \cdot \|u_{1}\|_{L^{2}(\nu)}^{2} - \frac{\lambda(3B)}{2} \cdot \|\overline{\phi}^{0} - \widetilde{\phi}^{\star}\|_{L^{2}(\nu)}^{2}~.
\end{equation}
Telescoping the identity from \cref{lem:key-identity} for \(n = 1\) to \(N - 1\) gives
\begin{equation*}
    \frac{2}{\lambda(3B)} \cdot (t_{1}^{2}v_{1} - t_{N}^{2}v_{N}) \geq \|u_{N}\|^{2}_{L^{2}(\nu)} - \|u_{1}\|_{L^{2}(\nu)}^{2} \geq -\|u_{1}\|_{L^{2}(\nu)}^{2}~.
\end{equation*}
Rearranging the terms, we have
\begin{equation*}
    v_{N}t_{N}^{2} \leq \frac{\lambda(3B)}{2} \cdot \|u_{1}\|_{L^{2}(\nu)}^{2} + t_{1}^{2}v_{1} \leq \frac{\lambda(3B)}{2} \cdot \|\overline{\phi}^{0} - \widetilde{\phi}^{\star}\|_{L^{2}(\nu)}^{2}~,
\end{equation*}
where the last step follows from \cref{eqn:first-iterate}.
Since \(t_{N} \geq \frac{N + 1}{2}\), we have
\begin{equation*}
    v_{N} \leq \frac{2 \cdot \lambda(3B) \cdot \|\overline{\phi}^{0} - \phi^{\star}\|_{L^{2}(\nu)}^{2}}{(N + 1)^{2}}~.
\end{equation*}
\end{proof}

\subsubsection{Proof of \cref{prop:path-space-phi-match-update-SDE}}
\label{app:sec:prf:SB-details}

\begin{proof}
We begin by noting that \(\bbP^{n + \nicefrac{1}{2}}\) is the solution to the following SDE
\[\rmd Y_t=[-v_{T-t}^n(Y_t)+2 \nabla\log p_{T-t}^n(Y_t)]\rmd t+\sqrt{2} \cdot \rmd B_{t},\quad Y_0\sim \widetilde{\bbP}^{n}\]
which corresponds to the time-reversal of the SDE that defines \(\bbP^{n}\) and with initial condition given by \(\widetilde{\bbP}^{n}\) whose density is proportional to \(\mathbb{P}^n_T \cdot \frac{\Phi(\nu)}{\Phi(\mathbb{P}^n_T)}\).

The second step follows from \citet[Thm. 4.2]{karimi2024sinkhorn}, which permits us to represent \ref{eqn:second_half_IPFP} as 
\begin{align}
\rmd X_t &= \left(\eta \left[ v_t^{n}(X_t)-2\nabla \log p_t^n(X_t)+2\nabla\log p_t^{n+1/2}(X_t) \right] + (1-\eta) v_t^n(X_t) -2\nabla V_t(X_t) \right) \rmd t \nonumber\\
&\qquad + \sqrt{2} \cdot \rmd B_{t}~,\nonumber\\
&= [v_t^{n}(X_t)-2\eta\nabla \log p_t^n(X_t)+2\eta\nabla\log p_t^{n+1/2}(X_t)-2\nabla V_t(X_t)] \rmd t+\sqrt{2} \cdot \rmd B_{t}\label{eqn:recursive_sde_IPF}
\end{align}
where \(X_0\sim \mu\).
The extra drift \(V_{t}\) is as defined in the statement of the proposition.
\end{proof}

\subsubsection{Proof of \cref{lem:SB_dual_formal}}
\label{subsec:proof:SB_dual_formal}
\begin{proof}
The proof of this statement is based on two key equivalences.
Recall that \(\phi_{t}^{n} = \log g_{t}^{n}\) for all \(n \geq 0\) and \(t \in [0, T]\).
These equivalences are:
\begin{enumerate}[leftmargin=*]
    \item between the dual potential $\phi^n$ from eOT and  backward dynamics on $\{\phi^n_t\}_t$ determined by the reference transition: this follows from \citet{caluya2021wasserstein,leonard2014survey} in the case of nonlinear drift (i.e., $u^{\textsf{ref}}\neq 0$).
    \item between the updates on the drifts of the SDE $v_t^n=u^{\textsf{ref}}+\nabla \phi^n_t$ and the path measures $\mathbb{P}^n$ factorized as \cref{eqn:factorize_SB_path}: this follows from classical result on Doob's $h$-transform, which implies that the optimal additional drift for $\mathbb{P}^\star$ should be in the form of $(\nabla\log g_t^\star)_{t\in [0,T]}$ built from the optimal potential $g_T^\star=e^{\phi^{\star}}$.
    The fact that \cref{eqn:factorize_SB_path} is the same as the law of the SDE \cref{eqn:sde_twisted} is also a consequence of the same twisted kernel argument \citep{dai1991stochastic}.
\end{enumerate}

The claim about convergence is primarily due to the factorisation of the path measure.
Since the bridges for these path measures satisfy $\mathbb{P}^n(X_{t\in(0,T)}\vert X_0,X_T)=\mathbb{P}^{\textsf{ref}}(X_{t\in(0,T)}\vert X_0,X_T)$, we have that
$\KLdist(\mathbb{P}^n \|\mathbb{P}^\star) = \KLdist(\pi^n \| \pi^\star)$.
Additionally, due to constraint that $\mathbb{P}_0^n=\mu$ in \cref{eqn:sde_twisted}, this rate is determined by $\mathbb{P}_T^n\rightarrow\nu$ (or equivalently $\phi_T^n=\phi^n\rightarrow \phi^\star$), similar to the static two-marginal case for \ref{eqn:phi-match-update}.
Notably, we also maintain the coupling $\pi^n$ as \(\pi(\phi^{n}, (\phi^{n})^{+})\) which ensures that they satisfy the form of the optimal coupling in \cref{eqn:eOT-opt-form}.
\end{proof}

\subsubsection{Proofs of intermediate lemmas instantiated in this subsection}
\label{sec:prf:intermediate-extensions}

\begin{proof}[Proof of \cref{lem:lower-bound-update}]
Note that \(\sfM^{\textsf{proj-SGA}}_{\calS_{B}}(\phi; \eta) \in \calS_{B}\), and therefore both \(\phi\) and \(\sfM^{\textsf{proj-SGA}}_{\calS_{B}}(\phi; \eta)\) are contained in \(\calS_{\max\{B, \bar{B}\}}\).
By \cref{lem:L2nu-smooth},
\begin{align*}
    \overline{J}_{\calS_{B}}(\sfM^{\textsf{proj-SGA}}_{\calS_{B}}(\phi; \eta)) &= J(\sfM^{\textsf{proj-SGA}}_{\calS_{B}}(\phi; \eta)) - \bbI_{\calS_{B}}(\sfM^{\textsf{proj-SGA}}_{\calS_{B}}(\phi; \eta)) \\
    &\geq J(\phi) + \left\langle \frac{\delta J(\phi)}{\nu}, \sfM^{\textsf{proj-SGA}}_{\calS_{B}}(\phi; \eta) - \phi\right\rangle_{L^{2}(\nu)} \\
    &\qquad - \frac{\lambda(\max\{B, \bar{B}\})}{2} \cdot \|\sfM^{\textsf{proj-SGA}}_{\calS_{B}}(\phi; \eta) - \phi\|_{\nu}^{2} - \bbI_{\calS_{B}}(\sfM^{\textsf{proj-SGA}}_{\calS_{B}}(\phi; \eta)) \\
    &= \widetilde{J}_{\eta, \calS_{B}}(\sfM^{\textsf{proj-SGA}}_{\calS_{B}}(\phi; \eta); \phi) \\
    &\qquad + \left(\frac{1}{2\eta} - \frac{\lambda(\max\{\bar{B}, B\})}{2}\right) \cdot \|\sfM^{\textsf{proj-SGA}}_{\calS_{B}}(\phi; \eta) - \phi\|_{L^{2}(\nu)}^{2} \\
    &\geq \widetilde{J}_{\eta, \calS_{B}}(\sfM^{\textsf{proj-SGA}}_{\calS_{B}}(\phi; \eta); \phi)~.
\end{align*}
The final inequality is due to the fact that \(\eta^{-1} \geq \lambda(\max\{\bar{B}, B\})\).
\end{proof}

\begin{proof}[Proof of \cref{lem:lem2.3-bt09-general}]
The proof generally follows \citet[Lem. 2.3]{beck2009fast} and we include details here for brevity.
By optimality, note that for any \(\phi \in L^{2}(\nu)\)
\begin{equation}
\label{eqn:opt-cond-Q}
    \delta J(\phi) - \frac{1}{\eta} \cdot \nu \cdot (\sfM^{\textsf{proj-SGA}}_{\calS_{B}}(\phi; \eta) - \phi) - \gamma(\phi) = 0; \qquad \gamma(\phi) \in \partial \bbI_{\calS_{B}}(\sfM^{\textsf{proj-SGA}}_{\calS_{B}}(\phi; \eta))~.
\end{equation}

From \cref{lem:lower-bound-update}, we know that
\begin{equation*}
    \overline{J}_{\calS_{B}}(\sfM^{\textsf{proj-SGA}}_{\calS_{B}}(\phi; \eta)) \geq \widetilde{J}_{\eta, \calS_{B}}(\sfM^{\emph{\textsf{proj-SGA}}}_{\calS_{B}}(\phi; \eta); \phi)~.
\end{equation*}
Consequently,
\begin{align*}
    \overline{J}_{\calS_{B}}(\sfM^{\textsf{proj-SGA}}_{\calS_{B}}(\phi; \eta)) - \overline{J}_{\calS_{B}}(\overline{\phi}) &\geq  \widetilde{J}_{\eta, \calS_{B}}(\sfM^{\textsf{proj-SGA}}_{\calS_{B}}(\phi; \eta); \phi) - \overline{J}_{\calS_{B}}(\overline{\phi}) \\
    &= \widetilde{J}_{\eta, \calS_{B}}(\sfM^{\textsf{proj-SGA}}_{\calS_{B}}(\phi; \eta); \phi) - J(\overline{\phi}) + \bbI_{\calS_{B}}(\overline{\phi}) \\
    &\overset{(a)}= J(\phi) + \left\langle \frac{\delta J(\phi)}{\nu}, \sfM^{\textsf{proj-SGA}}_{\calS_{B}}(\phi; \eta) - \phi\right\rangle_{L^{2}(\nu)} - J(\overline{\phi}) \\
    &\quad - \bbI_{\calS_{B}}(\sfM^{\textsf{proj-SGA}}_{\calS_{B}}(\phi; \eta)) - \frac{1}{2\eta} \|\sfM^{\textsf{proj-SGA}}_{\calS_{B}}(\phi; \eta) - \phi\|_{\nu}^{2} + \bbI_{\calS_{B}}(\overline{\phi}) \\
    &\overset{(b)}\geq \left\langle \frac{\delta J(\phi)}{\nu}, \sfM^{\textsf{proj-SGA}}_{\calS_{B}}(\phi; \eta) - \overline{\phi}\right\rangle_{L^{2}(\nu)} + \langle \gamma(\phi), \overline{\phi} - \sfM^{\textsf{proj-SGA}}_{\calS_{B}}(\phi; \eta)\rangle \\
    &\qquad - \frac{1}{2\eta} \|\sfM^{\textsf{proj-SGA}}_{\calS_{B}}(\phi; \eta) - \phi\|_{L^{2}(\nu)}^{2} \\
    &= \left\langle \frac{\delta J(\phi)}{\nu} - \frac{\gamma(\phi)}{\nu}, \sfM^{\textsf{proj-SGA}}_{\calS_{B}}(\phi; \eta) - \overline{\phi}\right\rangle_{L^{2}(\nu)} \\
    &\qquad - \frac{1}{2\eta} \|\sfM^{\textsf{proj-SGA}}_{\calS_{B}}(\phi; \eta) - \phi\|_{L^{2}(\nu)}^{2} \\
    &\overset{(c)}= \frac{1}{\eta} \cdot \langle \sfM^{\textsf{proj-SGA}}_{\calS_{B}}(\phi; \eta) - \phi, \sfM^{\textsf{proj-SGA}}_{\calS_{B}}(\phi; \eta) - \overline{\phi}\rangle_{L^{2}(\nu)} \\
    &\qquad - \frac{1}{2\eta} \|\sfM^{\textsf{proj-SGA}}_{\calS_{B}}(\phi; \eta) - \phi\|_{L^{2}(\nu)}^{2} \\
    &= \frac{1}{2\eta} \cdot \|\sfM^{\textsf{proj-SGA}}_{\calS_{B}}(\phi; \eta) - \phi\|_{L^{2}(\nu)}^{2} \\
    &\qquad + \frac{1}{\eta} \cdot \langle \sfM^{\textsf{proj-SGA}}_{\calS_{B}}(\phi; \eta) - \phi, \phi - \overline{\phi}\rangle_{L^{2}(\nu)}~.
\end{align*}
Step \((a)\) uses the definition of \(\widetilde{J}_{\eta, \calS_{B}}\), step \((b)\) uses the concavity of \(J\) and the convexity of \(\bbI_{\calS_{B}}\) as
\begin{gather*}
    -J(\overline{\phi}) + J(\phi) \geq \langle \delta J(\phi), \phi - \overline{\phi}\rangle~,\\
    \bbI_{\calS_{B}}(\overline{\phi}) - \bbI_{\calS_{B}}(\sfM^{\textsf{proj-SGA}}_{\calS_{B}}(\phi; \eta)) \geq \langle \gamma(\phi), \overline{\phi} - \sfM^{\textsf{proj-SGA}}_{\calS_{B}}(\phi; \eta)\rangle~.
\end{gather*}
Finally, we use the optimality condition (\cref{eqn:opt-cond-Q}) in step \((c)\).
\end{proof}

\begin{proof}[Proof of \cref{lem:key-identity}]
First, since \(\overline{\phi}^{n} \in \calS_{B}\) for all \(n \geq 1\) and \(\frac{t_{n} - 1}{t_{n + 1}} \leq 1\), by the triangle inequality for the semi-norm \(L^{\infty}(\calY)\), we have that \(\phi^{n} \in \calS_{3B}\) for all \(n \geq 0\).
Now, we apply \cref{lem:lem2.3-bt09-general} to two settings.
First, with \(\phi \leftarrow \phi^{n + 1}, \overline{\phi} \leftarrow \overline{\phi}^{n}, \bar{B} \leftarrow 3B\), we have
\begin{align*}
    \overline{J}(\overline{\phi}^{n + 1}) - \overline{J}(\overline{\phi}^{n}) &\geq \frac{\lambda(3B)}{2} \cdot \|\overline{\phi}^{n + 1} - \phi^{n + 1}\|_{L^{2}(\nu)}^{2} + \lambda(3B) \cdot \langle \overline{\phi}^{n + 1} - \phi^{n + 1}, \phi^{n + 1} - \overline{\phi}^{n}\rangle_{L^{2}(\nu)}~.
\end{align*}
Second, with \(\phi \leftarrow \phi^{n + 1}, \overline{\phi} \leftarrow \widetilde{\phi}^{\star}, \bar{B} \leftarrow 3B\), we have
\begin{align*}
    \overline{J}(\overline{\phi}^{n + 1}) - \overline{J}(\widetilde{\phi}^{\star}) &\geq \frac{\lambda(3B)}{2} \cdot \|\overline{\phi}^{n + 1} - \phi^{n + 1}\|_{L^{2}(\nu)}^{2} + \lambda(3B) \cdot \langle \overline{\phi}^{n + 1} - \phi^{n + 1}, \phi^{n + 1} - \widetilde{\phi}^{\star}\rangle_{L^{2}(\nu)}~.
\end{align*}
With the definition of \(v_{k}\), the left hand sides of both inequalities are \(v_{k} - v_{k + 1}\) and \(-v_{k + 1}\) respectively.
The remainder of the proof follows from the proof of \cite[Lem. 4.1]{beck2009fast}.
\end{proof}

\begin{lemma}
\label{lem:tn-recursion}
Consider the recursion 
\begin{equation*}
    t_{k + 1} = \frac{1 + \sqrt{1 + 4 t_{k}^{2}}}{2} \qquad k \geq 1~.
\end{equation*}
If \(t_{1} \geq 1\), then \(0 \leq \frac{t_{k} - 1}{t_{k + 1}} \leq 1\).
\end{lemma}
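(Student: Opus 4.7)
The plan is to first establish by induction that $t_{k} \geq 1$ for all $k \geq 1$, which immediately yields the lower bound $\frac{t_{k} - 1}{t_{k+1}} \geq 0$ once we also observe that $t_{k+1} > 0$. The base case is given by hypothesis. For the inductive step, I would simply note that if $t_{k} \geq 1$, then
\begin{equation*}
    t_{k + 1} = \frac{1 + \sqrt{1 + 4 t_{k}^{2}}}{2} \geq \frac{1 + \sqrt{5}}{2} > 1~.
\end{equation*}

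For the upper bound $\frac{t_{k} - 1}{t_{k+1}} \leq 1$, since $t_{k+1} > 0$ it suffices to show $t_{k+1} \geq t_{k} - 1$. First I would establish the stronger bound $t_{k+1} \geq t_{k} + \tfrac{1}{2}$, which follows directly from the elementary estimate $\sqrt{1 + 4t_{k}^{2}} \geq 2t_{k}$, giving
\begin{equation*}
    t_{k+1} = \frac{1 + \sqrt{1 + 4 t_{k}^{2}}}{2} \geq \frac{1 + 2 t_{k}}{2} = t_{k} + \frac{1}{2}~.
\end{equation*}
Since $t_{k} + \tfrac{1}{2} > t_{k} - 1$, the desired inequality $t_{k+1} > t_{k} - 1$ follows.

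There is no significant obstacle here: the argument is purely elementary manipulation of the closed-form recursion, and the key observation is only that the recursion can be rewritten as $t_{k+1}^{2} - t_{k+1} = t_{k}^{2}$ (equivalently $(2t_{k+1} - 1)^{2} = 1 + 4t_{k}^{2}$), which in particular forces monotone growth. Both bounds are then immediate from $t_{1} \geq 1$ and the bound $\sqrt{1 + 4t_{k}^{2}} \geq 2t_{k}$.
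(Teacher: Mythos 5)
Your proof is correct, and it takes a slightly different (and cleaner) route than the paper's. The paper proves the upper bound $t_{k}-1\leq t_{k+1}$ by rearranging to $t_{k}-\tfrac{3}{2}\leq\tfrac{\sqrt{1+4t_{k}^{2}}}{2}$, squaring both sides, and simplifying to the condition $t_{k}\geq\tfrac{2}{3}$ (which holds since $t_{k}\geq 1$); this chain is written as a string of equivalences, which is slightly sloppy because squaring a possibly-negative left-hand side is not an equivalence, though the one-directional implication the paper actually needs is valid. You instead observe the elementary bound $\sqrt{1+4t_{k}^{2}}\geq 2t_{k}$ to get the stronger monotonicity statement $t_{k+1}\geq t_{k}+\tfrac{1}{2}$, from which $t_{k+1}>t_{k}-1$ is immediate; this avoids the squaring step entirely and also exhibits the strict growth of $t_{k}$ that the recursion is designed to produce (which is what the $\tfrac{1}{N^{2}}$ rate in \cref{thm:acc-proj-sga-conv-rate} ultimately relies on, via $t_{N}\geq\tfrac{N+1}{2}$). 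Your handling of the lower bound via induction ($t_{k}\geq 1$ for all $k$) matches the paper's reasoning in substance, just stated more explicitly. Both arguments are valid; yours is marginally tighter and avoids the minor logical wrinkle.
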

\begin{proof}
Note that for any \(t_{k}\), \(\frac{1 + \sqrt{1 + 4t_{k}^{2}}}{2} \geq \frac{1 + 1}{2} = 1\)~.
Hence \(\frac{t_{k} - 1}{t_{k + 1}} \geq 0\).
Algebraically,
\begin{align*}
    t_{k} - 1 \leq t_{k + 1} &\Leftrightarrow t_{k} \leq t_{k + 1} + 1 \\
    &\Leftrightarrow t_{k} - \frac{3}{2} \leq \frac{\sqrt{1 + 4t_{k}^{2}}}{2} \\
    &\Leftrightarrow 4t_{k}^{2} + 9 - 12t_{k} \leq 1 + 4t_{k}^{2} \\
    &\Leftrightarrow \frac{2}{3} \leq t_{k}~.
\end{align*}
Since we know that \(t_{k} \geq 1 \geq \frac{2}{3}\), we have \(\frac{t_{k} - 1}{t_{k + 1}} \leq 1\).
\end{proof}

\section{Accelerating SGA through a mirror flow}
\label{sec:md-sga-acceleration}
The probability density w.r.t. the Lebesgue measure of a measure \(\rho\) will be identified by \(\rho\) as well.
We also use \(f, g\) in lieu of \(-\psi, \phi\) for the \(\calX\), \(\calY\) potentials. For convenience, we use the shorthand \(f \oplus g\) to denote \((f \oplus g)(x, y) = f(x) + g(y)\) where \(f : \calX \to \bbR\) and \(g : \calY \to \bbR\).
By direct calculation, we see that \(\delta \varphi^{\star}(f \oplus g) \in \calQ\):
\begin{align*}
    \delta \varphi^{\star}(f \oplus g)(x, y) &= \rmd\mu(x) \cdot \exp\left(g(y) - \frac{c(x, y)}{\varepsilon}\right) \rmd\nu(y) \cdot \left(\int_{\calY} \exp\left(g(y) - \frac{c(x, y)}{\varepsilon}\right)\rmd\nu(y)\right)^{-1} \\
    &= \exp\left(g(y) - g^{+}(x) - \frac{c(x, y)}{\varepsilon}\right)\rmd\mu(x)\rmd\nu(y)~\numberthis\label{eqn:invar-f-oplus}.
\end{align*}

If \(h^{0}\) is of the form \(h^{0} = f^{0} \oplus g^{0}\), then the continuous-time analogue of the update in \cref{eqn:discrete_MD} is given by
\begin{equation*}
\dot{h}^t = \bm{0} \oplus (-(\hat{\pi}_{\calY}^t-\nu)), \quad \hat{\pi}^t=\delta\varphi^{\star}(h^t)\in \calQ\, .
\end{equation*}
This is also equivalent to the ODE
\begin{equation}
    \dot{g}^t =-(\hat{\pi}_{\calY}^t-\nu)~, \quad \hat{\pi}^{t} = \delta \varphi^{\star}(h^{t}) \in \calQ~
\end{equation}
with $\dot{f}^t$ fixed by the dynamics on $\dot{g}^t$ by the $\hat{\pi}_{\calX}^t=\mu$ constraint.
The formal equivalence is established in \cite[Theorem 3.1]{karimi2024sinkhorn}, which also results in the rate
\begin{equation}
\label{eqn:MD_ode_rate}
\KLdist(\hat{\pi}^t_y \Vert \nu)\leq \frac{\KLdist(\pi(g^0,(g^{0})^{+}),\pi^\star)}{t}~.
\end{equation}

For building momentum into the dynamics, we build on \cite{krichene2015accelerated}, together with our interpretation of \ref{eqn:phi-match-update} in \cref{lem:md_interpret}.
Let $r\geq 2$ be a constant.
Define the ODE system below
\begin{subequations}
\begin{align}
\dot{\hat{\pi}}^t &=\frac{r}{t}(\delta\varphi^{\star}(h^t)-\hat{\pi}^t)~,
\label{eqn:ode-1}\\
\dot{h}^t &= \bm{0} \oplus \left(-\frac{t}{r} \cdot (\hat{\pi}_{\calY}^t-\nu)\right)~.\label{eqn:ode-2}
\end{align}
\end{subequations}
The initial conditions to this system is $\hat{\pi}^0 = \delta\varphi^*(h^0(x,y))$ where \(h^{0} = f^{0} \oplus g^{0}\) and hence
\begin{align}
\label{eqn:initial_ode}
\hat{\pi}^0(x, y) &= \exp\left(-\frac{c(x, y)}{\varepsilon}\right)\mu(x)\nu(y) \exp((g^0)^+(x)+g^0(y))\in \calQ~.
\end{align}
We reduce the pair of ODEs to a single ODE.
This is done by rewriting \cref{eqn:ode-1} as
\[t^r \dot{\hat{\pi}}^t+rt^{r-1}\hat{\pi}^t=rt^{r-1}\delta\varphi^*(h^t)\]
and upon time integration,
\begin{equation}
\label{eqn:weighted_sum_pi}
t^r\hat{\pi}^t = r\int_0^t \tau^{r-1} \delta\varphi^*(h^\tau) \rmd\tau\; \Leftrightarrow\; \hat{\pi}^t=\frac{\int_0^t \tau^{r-1} \delta\varphi^*(h^\tau) \rmd\tau}{\int_0^t \tau^{r-1} \rmd\tau}~.
\end{equation}
\cref{eqn:weighted_sum_pi} implies that the solution \(\hat{\pi}^{t}\) at each \(t\) is weighted ``sum'' of \(\left(\delta \varphi^{\star}(h^{\tau})\right)_{\tau \in (0, t)}\) that is based on \cref{eqn:ode-2}.
With the initial condition as in \(h^{0} = f^{0} \oplus g^{0}\), the second update \cref{eqn:ode-2} says the dual ``variable'' \(g^{t}\) accumulates gradient at a certain rate.
More precisely, \cref{eqn:ode-2} can be equivalently written as
\begin{equation}
\label{eqn:second_update}
    \dot{g}^{t} = -\frac{t}{r} \cdot (\hat{\pi}^{t}_{\calY} - \nu) \qquad h^{t} = f^{0} \oplus g^{t}~.
\end{equation}

Hence, to implement this, we only require access to \(\hat{\pi}^{t}_{\calY}\), which can be computed from \cref{eqn:weighted_sum_pi} as
\begin{align}
\hat{\pi}_{\calY}^t&=\int_{\calX} \frac{\int_0^t\tau^{r-1}\delta\varphi^{\star}(h^{\tau})\rmd\tau}{\int_0^t \tau^{r-1} \rmd\tau} \rmd x\nonumber \\
&=\left(\int_0^t \tau^{r-1} \rmd\tau\right)^{-1} \cdot \int_0^t\tau^{r-1} \left\{\int_{\calX} \exp\left(g^\tau(y)-(g^{\tau})^{+}(x) - \frac{c(x, y)}{\varepsilon}\right) \rmd\mu(x)\rmd\nu(y)\right\} \rmd\tau\label{eqn:actual_update}
\end{align}
where we used \(\delta\varphi^{\star}(h^{\tau}) \in \calQ\) from \cref{eqn:invar-f-oplus}. Note that \cref{eqn:actual_update} is only a function of $g$.
Consequently, the original accelerated mirror descent dynamics in \cref{eqn:ode-1,eqn:ode-2} are reduced to \cref{eqn:second_update}-\cref{eqn:actual_update} in terms of $g^t, \hat{\pi}_\calY^t$ only.
However, the non-convexity of \(\calQ\) doesn't guarantee that \(\hat{\pi}^{t} \in \calQ\) despite being a convex combination of \(\rho^{\tau}:= \delta\varphi^{\star}(h^{\tau})\) which individually lie in \(\calQ\).
This makes it somewhat difficult to argue if \(\hat{\pi}^{t}\) is approaching an optimal coupling.

However, this doesn't preclude us from giving a rate of convergence for the continuous-time dynamics \cref{eqn:ode-1,eqn:ode-2} for the \(\calY\)-marginal.

\begin{lemma}
\label{lem:acc-MD-rate}
Let \((\hat{\pi}^{t})_{t \geq 0}\) be the solution to the system \cref{eqn:ode-1,eqn:ode-2}.
For kernel \(k(y, y') = 1\) iff \(y = y'\), it holds for any \(r \geq 2\) that
\begin{equation*}
    \calL_{k}(\hat{\pi}^{t}_{\calY}, \nu) \leq \frac{r^{2}}{t^{2}} \cdot \KLdist(\pi(g^{0}, (g^{0})^{+}), \pi^{\star})~,
\end{equation*}
where $\calL_{k}(\cdot; \nu) = \frac{1}{2}\mathrm{MMD}_{k}(\cdot, \nu)^{2}$.
\end{lemma}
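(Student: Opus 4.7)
The plan is to run a Lyapunov analysis in the style of continuous-time accelerated mirror descent (Krichene et al.\ 2015, Wibisono-Wilson-Jordan), lifted to the measure setting via the mirror map $\varphi(\pi) = \KLdist(\pi \| \pi^{\textsf{ref}})$ introduced in \cref{lem:md_interpret}. Define the dual iterate $\rho^{t} := \delta\varphi^{\star}(h^{t})$, which automatically satisfies $\rho^{t}_{\calX} = \mu$ and, by \cref{eqn:invar-f-oplus}, equals $\pi(g^{0},(g^{0})^{+})$ at $t = 0$. Propose the Lyapunov candidate
\begin{equation*}
    E_{t} \;:=\; \frac{t^{2}}{r}\,\calL_{k}(\hat{\pi}^{t}_{\calY}, \nu) \;+\; r\cdot \KLdist(\pi^{\star}\|\rho^{t})~.
\end{equation*}
The goal is to prove $\dot{E}_{t} \leq 0$; integrating and dropping the non-negative Bregman term yields $\frac{t^{2}}{r}\calL_{k}(\hat{\pi}^{t}_{\calY},\nu) \leq E_{0} = r\cdot \KLdist(\pi^{\star}\|\pi(g^{0},(g^{0})^{+}))$, which is exactly the claimed rate.

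First I would differentiate the MMD term. Since the identity kernel gives $\delta\calL_{k}(\xi;\nu) = \xi - \nu$, and marginalising \cref{eqn:ode-1} over $\calX$ yields $\dot{\hat{\pi}}^{t}_{\calY} = \frac{r}{t}(\rho^{t}_{\calY} - \hat{\pi}^{t}_{\calY})$, I get
\begin{equation*}
    \frac{\rmd}{\rmd t}\calL_{k}(\hat{\pi}^{t}_{\calY},\nu) \;=\; \frac{r}{t}\bigl\langle \hat{\pi}^{t}_{\calY} - \nu,\; \rho^{t}_{\calY} - \hat{\pi}^{t}_{\calY}\bigr\rangle~.
\end{equation*}
Next I would differentiate the Bregman divergence using the standard identity $\frac{\rmd}{\rmd t}D_{\varphi}(\pi^{\star},\rho^{t}) = -\langle \dot{h}^{t}, \pi^{\star} - \rho^{t}\rangle$. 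The additive-in-$x$ ambiguity of $\delta\varphi$ highlighted after \cref{eqn:special_MD_map} is harmless here because both $\pi^{\star}$ and $\rho^{t}$ have $\calX$-marginal equal to $\mu$, so any purely $x$-dependent shift integrates to zero. Plugging in $\dot{h}^{t} = \bm{0}\oplus\bigl(-\tfrac{t}{r}(\hat{\pi}^{t}_{\calY}-\nu)\bigr)$ and using $\pi^{\star}_{\calY}=\nu$ gives
\begin{equation*}
    \frac{\rmd}{\rmd t}\KLdist(\pi^{\star}\|\rho^{t}) \;=\; \frac{t}{r}\bigl\langle \hat{\pi}^{t}_{\calY} - \nu,\; \nu - \rho^{t}_{\calY}\bigr\rangle~.
\end{equation*}

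Summing the three contributions to $\dot{E}_{t}$, the cross-terms involving $\rho^{t}_{\calY}$ cancel and leave
\begin{equation*}
    \dot{E}_{t} \;=\; \frac{2t}{r}\calL_{k}(\hat{\pi}^{t}_{\calY},\nu) \;+\; t\bigl\langle \hat{\pi}^{t}_{\calY} - \nu,\; \nu - \hat{\pi}^{t}_{\calY}\bigr\rangle \;=\; \frac{2t}{r}\calL_{k}(\hat{\pi}^{t}_{\calY},\nu) \;-\; t\,\|\hat{\pi}^{t}_{\calY}-\nu\|_{L^{2}(\calY)}^{2}~.
\end{equation*}
Using the convexity inequality for $\calL_{k}(\cdot;\nu)$ (equivalently, recognising $\|\hat{\pi}^{t}_{\calY}-\nu\|^{2}_{L^{2}(\calY)} = 2\calL_{k}(\hat{\pi}^{t}_{\calY},\nu) \geq \calL_{k}(\hat{\pi}^{t}_{\calY},\nu)$), I obtain $\dot{E}_{t} \leq t(\tfrac{2}{r} - 1)\calL_{k}(\hat{\pi}^{t}_{\calY},\nu) \leq 0$ for $r \geq 2$.

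The main obstacle I anticipate is justifying the differentiations above rigorously at the level of measures, in particular establishing that $t \mapsto \rho^{t}$ is sufficiently smooth and that the chain-rule-type identity $\frac{\rmd}{\rmd t}D_{\varphi}(\pi^{\star},\rho^{t}) = -\langle \dot{h}^{t},\pi^{\star}-\rho^{t}\rangle$ holds along the flow. This requires a careful application of the Legendre-duality characterisation $\rho^{t} = \delta\varphi^{\star}(h^{t})$, which implies $\delta\varphi(\rho^{t}) \equiv h^{t}$ modulo functions of $x$ alone, together with the observation that $\dot{\rho}^{t}_{\calX} \equiv 0$ (since $\rho^{t}_{\calX} = \mu$ is fixed along the flow), which annihilates precisely this ambiguity. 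Once that piece is in place, the rest of the argument is a routine telescoping and integration in $t$.
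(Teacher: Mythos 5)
Your proof is correct and follows essentially the same route as the paper: the same Lyapunov functional $\tfrac{t^{2}}{r}\calL_{k}(\hat\pi^{t}_{\calY};\nu)+r\cdot(\text{Bregman term})$, the same decomposition of $\dot{E}_{t}$ via \cref{eqn:ode-1,eqn:ode-2}, and the same cancellation leaving $\tfrac{2t}{r}\calL_{k}-t\|\hat\pi^{t}_{\calY}-\nu\|^{2}\leq 0$ for $r\geq 2$. The only stylistic difference is that you write the second Lyapunov term as the primal Bregman divergence $\KLdist(\pi^{\star}\|\rho^{t})=D_{\varphi}(\pi^{\star};\rho^{t})$, whereas the paper writes the (Legendre-dual, hence equal) quantity $D_{\varphi^{\star}}(f^{0}\oplus g^{t};f^{0}\oplus g^{\star})$ and converts to KL only at $t=0$; your explicit handling of the $x$-additive ambiguity in $\delta\varphi$ is a welcome clarification of a point the paper leaves implicit.
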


\begin{proof}
The proof is based on showing the decay of the following Lyapunov functional
\begin{equation*}
V(\hat{\pi}_y^t,g^t,t)=\frac{t^2}{r} \calL_{k}(\hat{\pi}_y^t;\nu)+rD_{\varphi^{\star}}((f^{0} \oplus g^t); (f^{0} \oplus g^{\star}))~,
\end{equation*}
where \(D_{\varphi^{\star}}(h'; h) = \varphi^{\star}(h') - \varphi^{\star}(h) - \left\langle\delta \varphi^{\star}(h), h' - h\right\rangle\).
The time derivative of \(V\) is
\begin{align*}
\frac{\rmd}{\rmd t}V(\hat{\pi}_y^t,g^t,t)&=\frac{2t}{r}\calL_{k}(\hat{\pi}_y^t; \nu) + \frac{t^2}{r}\langle \delta \calL_{k}(\hat{\pi}_y^t; \nu),\dot{\hat{\pi}}^t\rangle\\
&\qquad +r\langle \dot{g}^t,\nabla\varphi^{\star}((f^{0} \oplus g^t))-\nabla\varphi^{\star}((f^{0} \oplus g^{\star}))\rangle\\
&= \frac{2t}{r}\calL_{k}(\hat{\pi}_y^t; \nu) + t\langle \delta \calL_{k}(\hat{\pi}_y^t; \nu), \frac{t}{r}\dot{\hat{\pi}}^t-\nabla\varphi^{\star}((f^{0} \oplus g^t))+\nabla\varphi^{\star}((f^{0} \oplus g^{\star}))\rangle\\
&= \frac{2t}{r}\calL_{k}(\hat{\pi}_y^t; \nu) - t\langle \delta \calL_{k}(\hat{\pi}_y^t;\nu), \hat{\pi}^t-\pi^{\star}\rangle\\
&= -\frac{t(r-2)}{r} \calL_{k}(\hat{\pi}_y^t; \nu)\, .
\end{align*}
Therefore if $r\geq 2$, $V$ is decreasing with time, and we have
\begin{align*}
    \frac{t^2}{r} \calL_{k}(\hat{\pi}_y^t; \nu) &\leq V(\hat{\pi}_y^t,g^t,t) \\
    &\leq V(\hat{\pi}_y^0,g^0,0) \\
    &=rD_{\varphi^{\star}}((f^{0} \oplus g^t),(f^{0} \oplus g^{\star}))\\ 
    &=r \cdot \KLdist(\pi(g^0,(g^{0})^{+}),\pi^{\star})~.
\end{align*}
where in the last step, we used \citet[Lem. B.1]{karimi2024sinkhorn}.
This completes the proof.
\end{proof}

Although the rate above is $\mathcal{O}(1/t^2)$, faster than MD ODE with $\mathcal{O}(1/t)$ rate in \cref{eqn:MD_ode_rate}, the lemma does not establish convergence of $\hat{\pi}^t\rightarrow \pi^*$, only the convergence for the $\calX$ and the $\calY$ marginals. However, from \cref{eqn:actual_update} we have that $\hat{\pi}^t$ is a weighted sum of $\rho^\tau\in \calQ$ and since the weights increase very fast with $\tau$ after running long enough we'd expect $\hat{\pi}^t\approx \rho^t\in \calQ$ for large $t$.
What we can conclude from this discussion is that despite the niceness of \(\calL_{k}\) from a mirror descent standpoint (e.g., smooth and convex), the non-convexity of the constraint set we are optimizing over poses a fundamental challenge for acceleration, which involves interpolation between past iterates.
This also highlights the benefit of the semi-dual perspective for building momentum, as we do in \cref{sec:extensions:steep-ascent}.

\section{Semi-dual problem in OT}
\label{sec:semi-dual-OT}
For completeness, here we briefly highlight the semi-dual problem associated with the OT problem (\cref{eqn:OT-prob-primal}) as well.
The Monge-Kantorovich dual problem in OT \citep[Chap. 1]{villani2003topics} is given by
\begin{equation}
\label{eqn:OT-prob-dual}
\opttrans(\mu, \nu; c) = \sup \left\{ \int f(x) d\mu(x) + \int g(y) \rmd\nu(y) : f(x) + g(y) \leq c(x, y)\right\}~.
\end{equation}

The \(c\)-transform of a continuous function \(\varphi\) is defined as
\begin{equation*}
    \varphi^{c}(x) = \inf_{y} \; c(x, y) - \varphi(y)~.
\end{equation*}
From \cite{gangbo1996geometry}, any solution \((f^{\star}, g^{\star})\) that solves \cref{eqn:OT-prob-dual} is a pair of functions that satisfies
\begin{equation*}
    f^{\star} = (g^{\star})^{c} \qquad g^{\star} = (f^{\star})^{c}~.
\end{equation*}
Consequently, the dual problem (\cref{eqn:OT-prob-dual}) can be reduced to a problem over just one of \(f \in L^{1}(\mu)\) or \(g \in L^{1}(\nu)\) \citep[Thm. 1]{jacobs2020fast} as
\begin{equation}
\label{eqn:semi-dual-OT}
    \opttrans(\mu, \nu; c) = \sup_{f \in L^{1}(\mu)} \int f(x)\rmd \mu(x) + \int f^{c}(y)\rmd \nu(y)~.
\end{equation}

\paragraph{Semi-dual gradient ascent for OT}
We would like to highlight here that iterative methods for maximising the semi-dual for the OT problem have also been previously proposed \citep{chartrand2009gradient,jacobs2020fast}.
\citet{chartrand2009gradient} consider the \(L^{2}\)-gradient (equivalently, the Fr\'{e}chet derivative) of the OT semi-dual, while recent work by \citet{jacobs2020fast} use the \(\dot{H}^{1}\)-gradient defined with respect to the homogeneous Sobolev space \(\dot{H}^{1}\) which \citet{jacobs2020fast} argue is better suited to the OT problem.

\section{Additional remarks on \ref{eqn:kernel-sga-update}}
\subsection{Reparameterisation}
Alternatively, one can also interpret \ref{eqn:kernel-sga-update} as gradient ascent on a re-parametrised dual objective.
Let \(\widetilde{J} : L^{1}(\nu) \to \bbR\) be defined as
\begin{equation*}
    \widetilde{J}(\varphi) = J\left(\int_{\calY} k(z, \cdot) \varphi(z)\rmd z\right)~.
\end{equation*}
Let \(M_{k} : L^{1}(\nu) \to L^{\infty}(\calY)\) be defined as \(M_{k}(g) = \int_{\calY} k(z, \cdot)g(z)\rmd z\), and note that this is linear.
Therefore,
\begin{align*}
    \delta \widetilde{J}(\varphi)(y) &= \delta J(M_{k}(\varphi))[\delta M_{k}(\varphi)(y)] \\
    &= \int_{\calY} \delta J(M_{k}(\varphi))(z) \cdot k(z, y) \rmd z ~.
\end{align*}
A gradient ascent procedure for \(\widetilde{J}\) results in the iteration
\begin{align*}
    \varphi^{n + 1} &= \varphi^{n} + \eta \cdot \delta \widetilde{J}(\varphi^{n}) \numberthis\label{eqn:grad-ascent-kernelised-J} \\
    \Rightarrow M_{k}(\varphi^{n + 1}) &= M_{k}\left(\varphi^{n} + \eta \cdot \delta \widetilde{J}(\varphi^{n}) \right) \\
    &= M_{k}(\varphi^{n}) + \eta \cdot \int_{\calY} k(w, \cdot) \cdot \delta \widetilde{J}(\varphi^{n})(w) \rmd w \\
    &= M_{k}(\varphi^{n}) + \eta \cdot \int_{\calY} k(w, \cdot) \int_{\calY} \delta J(M_{k}(\varphi^{n}))(z) \cdot k(w, z) \rmd z \rmd w \\
    &= M_{k}(\varphi^{n}) + \eta \cdot \int_{\calY} \delta J(M_{k}(\varphi^{n}))(z) \cdot k(z, \cdot)\rmd z~.
\end{align*}
This shows that a sequence of \(\{\varphi^{n}\}_{n \geq 0}\) obtained through \cref{eqn:grad-ascent-kernelised-J} can be mapped to \(\{\phi^{n}\}_{n \geq 0}\) obtained from \ref{eqn:kernel-sga-update} as \(\phi^{n} = M_{k}(\varphi^{n})\) for all \(n \geq 0\).

\subsection{Particle implementation of \(k\)\textsf{-SGA}}

When \(k\) is a Gaussian or a Laplace kernel, we can use the form of \(\frakm_{k}\) and independent samples from a Gaussian or Laplace distribution respectively to implement \ref{eqn:kernel-sga-update} approximately.
This is because \(\frakm_{k}(f)(y) = C_{k} \cdot \bbE_{\xi \sim \calD_{k}}\left[f(y + \xi)\right]\) where the pair \((C_{k}, \calD_{k})\) are
\begin{equation*}
    (C_{k},\calD_{k}) =
    \begin{cases}
        (2\pi \sigma^{2})^{\nicefrac{d}{2}},~\calN(0, \sigma^{2} \cdot \rmI_{d}) & \text{if } k(y, y') = \exp\left(-\frac{\|y' -y\|^{2}_{2}}{2\sigma^{2}}\right) \\
        (2b)^{d},~\mathrm{Lap}(0, b) & \text{if } k(y, y') = \exp\left(-\frac{\|y' - y\|_{1}}{2b}\right)
    \end{cases}~.
\end{equation*}
As a result, the particle version of \ref{eqn:kernel-sga-update} can be expressed as
\begin{equation*}
    \widehat{\sfM}^{k\textsf{-SGA}}(\phi; \eta) = \phi + \frac{\eta \cdot C_{k}}{N_{\text{samp}}} \cdot \left\{\sum_{i=1}^{N_{\text{samp}}} \nu(y + \xi_{i}) - \pi(\phi, \phi^{+})_{\calY}(y + \xi'_{i})\right\}~; \quad \{\xi_{i}\}, \{\xi_{i}'\} \overset{\mathrm{i.i.d.}}{\sim} \calD_{k}~.
\end{equation*}

Besides these, for a general \(k\), it might be viable to consider the Stein kernel \citep{liu2016kernelized}.
The Stein kernel for a distribution \(\rho\) and with a base kernel \(k\) is given by
\begin{align*}
    k_{\rho}(x, x') &= k(x, x') \cdot \langle \nabla \log \rho(x), \nabla \log \rho(x')\rangle + \langle \nabla \log \rho(x), \nabla_{2}k(x, x')\rangle \\
    &\qquad + \langle \nabla \log \rho(x'), \nabla_{1}k(x, x')\rangle + \nabla_{1} \cdot (\nabla_{2}k(x, x'))~.
\end{align*}
The key property of the Stein kernel is that
\begin{equation*}
    \int k_{\rho}(z, \cdot) \rho(z) \rmd z = 0~.
\end{equation*}
When given an oracle to sample from \(\pi(\phi, \phi^{+})_\calY\) but not \(\nu\), it would be useful to consider the Stein kernel \(k_{\nu}\), and with this choice of \(k \leftarrow k_{\nu}\), \ref{eqn:kernel-sga-update} becomes
\begin{equation*}
    \phi^{n + 1} = \phi^{n} - \eta \cdot \int_{\calY} k_{\nu}(z, \cdot) \pi(\phi^{n}, (\phi^{n})^{+})_{\calY}(z)\rmd z~.
\end{equation*}

\end{document}